\newtheorem{Def}{Definition}[section]
\newtheorem{Thm}[Def]{Theorem}
\newtheorem{Prop}[Def]{Proposition}
\newtheorem{Cor}[Def]{Corollary}
\newtheorem{Lem}[Def]{Lemma}
\newtheorem{Rem}[Def]{Remark}
\newtheorem{Ex}[Def]{Example}
\newcommand{\C}{\mathbb{C}}
\newcommand{\R}{\mathbb{R}}
\newcommand{\Z}{\mathbb{Z}}
\newcommand{\N}{\mathbb{N}}
\newcommand{\Q}{\mathbb{Q}}
\newcommand{\PP}{\mathbb{P}}
\newcommand{\HH}{\mathbb{H}}
\newcommand{\GL}{\mathop{\mathrm{GL}}\nolimits}
\newcommand{\PGL}{\mathop{\mathrm{PGL}}\nolimits}
\newcommand{\id}{\mathop{\mathrm{id}}\nolimits}
\newcommand{\disc}{\mathop{\mathrm{disc}}\nolimits}
\newcommand{\Ker}{\mathop{\mathrm{Ker}}\nolimits}
\newcommand{\rk}{\mathop{\mathrm{rank}}\nolimits}
\newcommand{\Hom}{\mathop{\mathrm{Hom}}\nolimits}
\newcommand{\End}{\mathop{\mathrm{End}}\nolimits}
\newcommand{\Aut}{\mathop{\mathrm{Aut}}\nolimits}
\newcommand{\ord}{\operatorname{ord}}
\newcommand{\rank}{\operatorname{rank}}
\begin{document}
\title{Calabi--Yau Threefolds of Type K (II): Mirror Symmetry}
\author{Kenji Hashimoto \ \ \ \ Atsushi Kanazawa}
\date{}

\maketitle

\begin{abstract}
A Calabi--Yau threefold is called of type K if it admits an \'etale Galois covering by the product of a K3 surface and an elliptic curve. 
In our previous paper \cite{HK}, based on Oguiso--Sakurai's fundamental work \cite{OS}, 
we have provided the full classification of Calabi--Yau threefolds of type K and have studied some basic properties thereof. 
In the present paper, we continue the study, investigating them from the viewpoint of mirror symmetry. 
It is shown that mirror symmetry relies on duality of certain sublattices in the second cohomology of the K3 surface appearing in the minimal splitting covering.   
The duality may be thought of as a version of the lattice duality of the anti-symplectic involution on K3 surfaces discovered by Nikulin \cite{Ni3}. 
Based on the duality, we obtain several results parallel to what is known for Borcea--Voisin threefolds. 
Along the way, we also investigate the Brauer groups of Calabi--Yau threefolds of type K.  
\end{abstract}




\section{Introduction}
The present paper studies mirror symmetry and some topological properties of Calabi--Yau threefolds of type K. 
A Calabi--Yau threefold\footnote{
We do not assume that $X$ is simply-connected. 
One reason is that simply-connected Calabi--Yau threefolds are not closed under mirror symmetry; 
a mirror partner of a simply-connected Calabi--Yau threefold may not be simply-connected \cite{GP}} 
is a compact K\"ahler threefold $X$ with trivial canonical bundle $\varOmega_{X}^{3}\cong \mathcal{O}_{X}$ and $H^{1}(X, \mathcal{O}_{X})=0$. 
In \cite{OS}, Oguiso and Sakurai call $X$ a Calabi--Yau threefold  {\it of type K} if it admits an \'etale Galois covering by the product of a K3 surface and an elliptic curve. 
Among many candidates of such coverings, we can always find a unique smallest one, up to isomorphism as a covering, and we call it the minimal splitting covering. 
The importance of this class of Calabi--Yau threefolds comes from the fact that it is one of the two classes of Calabi--Yau threefolds with infinite fundamental group.  
In our previous work \cite{HK}, based on Oguiso--Sakurai's fundamental work, we have given the full classification of Calabi--Yau threefolds of type K.  

\begin{Thm}[\cite{OS,HK}] 
There exist exactly eight Calabi--Yau threefolds of type K, up to deformation equivalence.
The equivalence class is uniquely determined by the Galois group of the minimal splitting covering, 
which is isomorphic to one of the following combinations of cyclic and dihedral groups:
$$
C_{2},\ C_2 \times C_2, \ C_2 \times C_2 \times C_2, \ D_{6}, \ D_{8}, \ D_{10}, \ D_{12}, \ \text{and} \ \ C_2 \times D_8. 
$$ 
The Hodge numbers $h^{1,1}=h^{2,1}$ are respectively given by $11,7,5,5,4,3,3,3$.  
\end{Thm}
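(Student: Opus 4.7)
The plan is to follow the two-step strategy behind \cite{OS,HK}: first constrain the Galois group $G$ of the minimal splitting covering $\pi\colon S\times E\to X$, and then for each admissible $G$ verify existence of a free $G$-action and classify the resulting quotients up to deformation. Projecting onto the elliptic factor gives a homomorphism $\varphi\colon G\to \Aut(E)$, while preservation of $\omega_{S}\wedge\omega_{E}$ yields a character $\chi\colon G\to\C^{*}$ with $g^{*}\omega_{E}=\chi(g)\omega_{E}$ and $g^{*}\omega_{S}=\chi(g)^{-1}\omega_{S}$. Since $\ker\varphi$ acts trivially on $E$, the freeness of the $G$-action forces each nontrivial element of $\ker\varphi$ to act freely on $S$; the Enriques criterion then gives $|\ker\varphi|\le 2$, with a nontrivial kernel acting as a fixed-point-free anti-symplectic involution. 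For $g\notin\ker\varphi$ whose image in $\Aut(E)$ has non-empty fixed locus (i.e.\ is not a translation), the same logic forces $g|_{S}$ to be a fixed-point-free anti-symplectic involution, so $\chi(g)=-1$. Together with the decomposition $\Aut(E)\cong E\rtimes\Aut(E,0)$, this reduces $G$ to finitely many candidates.

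Combining the above constraints with Nikulin's classification of finite symplectic automorphism groups of K3 surfaces, one enumerates the candidate $G$'s explicitly; an exhaustive case analysis then shows that only the eight groups listed actually occur. This combinatorial enumeration is the principal step and is what I expect to be the main obstacle. For each admissible $G$ one must also exhibit a concrete pair $(S,E)$ with a compatible free $G$-action, typically realised as a $G$-equivariant lattice-polarised family of K3 surfaces together with a suitable translation action on $E$.

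To show that the deformation class of $X$ depends only on $G$, the plan is to apply the Torelli theorem and reduce to the statement that the moduli space of K3 surfaces carrying the prescribed $G$-action on $H^{2}(S,\Z)$ is connected; this is the lattice-theoretic core of \cite{HK}, requiring a careful analysis of the invariant and anti-invariant sublattices in $H^{2}(S,\Z)$. Finally, the equality $h^{1,1}(X)=h^{2,1}(X)$ and the numerical sequence $11,7,5,5,4,3,3,3$ are obtained from a K\"unneth decomposition of $H^{\ast}(S\times E,\C)$ by extracting the $G$-invariant pieces weighted by $\chi$; a case-by-case computation using the explicit $G$-action on $H^{2}(S,\Z)$ determined above yields the asserted values.
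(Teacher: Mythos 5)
Your proposal follows essentially the same strategy as the cited proof in \cite{OS,HK} that the paper summarizes in Section 3.1: first the structural constraints on $G$ (the semidirect product $G=H\rtimes C_2$ with $H$ symplectic on $S$ and translations on $E$, and $G\setminus H$ acting by Enriques involutions, as in Proposition \ref{PROP_OS}), then enumeration via Nikulin's classification, existence via explicit (Horikawa-type) models, deformation-uniqueness via the Torelli theorem and connectedness of the relevant moduli, and Hodge numbers via the $G$-invariant K\"unneth pieces. The only point to tighten is your treatment of $\ker\varphi$: minimality of the splitting covering forces $G$ to act faithfully on each factor, so the case $|\ker\varphi|=2$ must be excluded rather than carried along.
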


In \cite{HK}, we also obtain explicit presentations of the eight Calabi--Yau threefolds of type K. 
Although Calabi--Yau threefolds of type K are very special, 
their explicit nature makes them an exceptionally good laboratory for general theories and conjectures. 
Indeed, the simplest example, known as the Enriques Calabi--Yau threefold, 
has been one of the most tractable compact Calabi--Yau threefolds (for example \cite{FHSV,KM}).   
The objective of this paper is to investigate Calabi--Yau threefolds of type K with a view toward self-mirror symmetry. 

Here is a brief summary of the paper. 
Let $X$ be a Calabi--Yau threefold of type K and $\pi \colon S\times E \rightarrow X$ its minimal splitting covering, with a K3 surface $S$ and an elliptic curve $E$. 
The geometry of $X$ is equivalent to the $G$-equivariant geometry of the covering space $S\times E$, where $G:=\mathrm{Gal}(\pi)$ is the Galois group of the covering $\pi$.  
Moreover, $G$ turns out to be of the form $G=H\rtimes C_2$ and it acts on each factor, $S$ and $E$. 
Let $M_G:=H^2(S,\Z)^G$ and $N_G:=H^2(S,\Z)^H_{C_2}$ (see Section \ref{SECT_lattice} for the notation).  
The former represents the $G$-equivariant algebraic cycles and the latter the transcendental cycles. 
The first main result is an existence of duality between these lattices. 
\begin{Thm}[Theorem \ref{G-inv lattice}] \label{intro: main1}
There exists a lattice isomorphism $U\oplus M_G \cong N_G$ over the rational numbers $\Q$ (or some extension of $\Z$), where $U$ is the hyperbolic lattice. 
\end{Thm}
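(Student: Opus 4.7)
The plan is to verify the isomorphism over $\Q$ by matching signatures, ranks, and local Hasse invariants, relying on the explicit classification of the eight Calabi--Yau threefolds of type K from \cite{HK}. First I fix the signatures. Since the normal subgroup $H$ acts symplectically on $S$, it fixes $H^{2,0}(S)\oplus H^{0,2}(S)$ pointwise, and averaging gives a $G$-invariant K\"ahler class $\kappa\in H^{1,1}(S,\R)$. The generator $\tau$ of $G/H\cong C_2$ is anti-symplectic: being the sign-involution on the elliptic factor $E$ forces $\tau^*\omega_S=-\omega_S$ in order to preserve the holomorphic volume form on $S\times E$. Consequently $H^{2,0}\oplus H^{0,2}$ lies in the $(-1)$-eigenspace of $\tau^*$ on $L:=H^2(S,\Z)^H$, while $\kappa$ lies in the $(+1)$-eigenspace. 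This yields signatures $(1,r_+-1)$ for $M_G$ and $(2,r_--2)$ for $N_G$, where $r_\pm:=\rk M_G,\ \rk N_G$. In particular $U\oplus M_G$ has signature $(2,r_+)$, matching that of $N_G$ precisely when $r_-=r_++2$.

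Next I establish the rank equality $r_-=r_++2$. This can be approached either by a character-theoretic Lefschetz computation for $\tau^*$ on $L\otimes\Q$, expressing $r_+-r_-$ in terms of the topology of the fixed locus $S^\tau$ and its interaction with the $H$-action, or by direct verification in each of the eight cases using the explicit models of $(S,G)$ from \cite{HK}. Either route reduces the question to exhibiting two specific classes in $N_G$ that span a hyperbolic plane. To produce such a plane I would use a $G$-equivariant elliptic fibration on $S$ that arises naturally in the type-K setting. The fiber class and a suitably shifted section combine into a rank-two sublattice isomorphic to $U$ on which, after normalization, $\tau$ acts by $-1$, so that $U$ embeds into $N_G$. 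Its orthogonal complement in $N_G$ then has the same rank $r_+$ and signature $(1,r_+-1)$ as $M_G$, and the identification with $M_G\otimes\Q$ is completed by matching discriminants and local Hasse--Witt invariants and invoking Hasse--Minkowski. A strengthening to $\Z[1/N]$ for a suitable $N$ follows from Nikulin's uniqueness results for indefinite lattices with prescribed discriminant form.

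The main obstacle is the uniform construction of the hyperbolic plane and the verification that it lands in $N_G$ after $H$-averaging. The base case $G=C_2$ is the classical Nikulin duality $H^2(S,\Z)^-\cong U\oplus H^2(S,\Z)^+$ for the Enriques involution, which even holds integrally. For the larger groups I expect a genuinely case-by-case argument, using the explicit $G$-equivariant elliptic fibrations constructed in \cite{HK} and a careful tally of how $H$-invariance interacts with the candidate $U$-summand.
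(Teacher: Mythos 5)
Your signature and rank bookkeeping is correct and is essentially the standard argument: since every $g\in G\setminus H$ acts as a free (Enriques) involution, the Lefschetz fixed point formula gives $\mathrm{tr}\bigl(\iota^*|_{\Lambda^H}\bigr)=\frac{1}{|H|}\sum_{h\in H}\mathrm{tr}\bigl((\iota h)^*|_{\Lambda}\bigr)=-2$ with $\Lambda=H^2(S,\Z)$, whence $\rk N_G=\rk M_G+2$ and the signatures of $U\oplus M_G$ and $N_G$ agree. But the two load-bearing steps of your plan do not go through as stated. First, the hyperbolic plane you propose to exhibit inside $N_G$ does not exist integrally: by the explicit computation in Theorem \ref{G-inv lattice}, for $G\not\cong C_2$ the lattice $N_G$ contains only copies of $U(k)$ with $k>1$ (e.g.\ $N_{D_{12}}\cong U(6)^{\oplus 2}$, in which every pairing is divisible by $6$), so no fiber class and shifted section can span a sublattice isomorphic to $U$ there. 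Worse, for generic $S$ one has $N_G=T(S)$, the transcendental lattice, which contains no algebraic classes at all; fiber and section classes of a $G$-equivariant elliptic fibration live in $NS(S)$, on the $M_G$ side. One can move algebraic classes into $N_G$ only after a hyperk\"ahler rotation (this is exactly what the paper does in Proposition \ref{hyperKahler rotation}), and even then one only obtains $U(k)$. Over $\Q$ the distinction evaporates since $U(k)\otimes\Q\cong U\otimes\Q$, but then the entire elliptic-fibration construction is superfluous: rationally an indefinite form of signature $(2,r)$ always splits off a hyperbolic plane.

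Second, the step that carries all the actual content --- ``matching discriminants and local Hasse--Witt invariants'' --- is left entirely unperformed, and your proposed route to those invariants (explicit models and fibrations) is precisely the hard part. The paper's proof of Theorem \ref{G-inv lattice} determines $N_G$ \emph{integrally}, case by case, via $N_G=(\Lambda_{G'}\oplus L)^\perp_\Lambda$ for the Horikawa model, an overlattice analysis driven by reducible curves in the branch linear system (producing classes such as $u=(v+e)/2$), Nikulin's discriminant-form and genus theorems, and the uniqueness of Calabi--Yau $G$-actions from \cite{HK}; the $\Q$-isomorphism $U\oplus M_G\cong N_G$ is then read off from the resulting table (Remark \ref{REM_Q_isom}). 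If you only want the rational statement, the clean shortcut --- which your write-up does not use --- is to note that $N_G=(M_G)^\perp_{\Lambda^H}$, so $\Lambda^H\otimes\Q\cong(M_G\oplus N_G)\otimes\Q$ and Witt cancellation determines $N_G\otimes\Q$ from the \emph{known} lattices $\Lambda^H$ (Proposition \ref{symp-inv lattice}) and $M_G$ (\cite{HK}); the theorem then reduces to the eight-case verification $\Lambda^H\otimes\Q\cong(U\oplus M_G\oplus M_G)\otimes\Q$. As it stands, your argument establishes only the numerical (rank and signature) part of the claim and defers everything else.
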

Recall that Calabi--Yau threefolds of type K are close cousins of Borcea--Voisin threefolds \cite{Bo,Vo}, 
whose mirror symmetry stems from the lattice duality of the anti-symplectic involution on K3 surfaces discovered by Nikulin \cite{Ni3}. 
Theorem \ref{intro: main1} can be thought of as an $H$-equivariant version of Nikulin's duality, although it does not hold over $\Z$. 
It also indicates the fact that $X$ is self-mirror symmetric\footnote{It is important to note that the K3 surface $S$ is in not self-mirror symmetry in the sense of Dolgachev \cite{Do} 
unless $G=C_2$, i.e.\ $H$ is trivial.}.  
Based on this fundamental duality, we obtain several results on the Yukawa couplings (Theorem \ref{Yukawa}) and special Lagrangian fibrations (Propositions \ref{SLAG} \& \ref{SLAG2})
parallel to what is known for Borcea--Voisin threefolds (Voisin \cite{Vo} and Gross--Wilson \cite{GW}).   
The mirror symmetry for Borcea--Voisin threefolds has a different flavour from that for the complete intersection Calabi--Yau threefolds in toric varieties and homogeneous spaces,  
and hence it has been a very important source of examples beyond the Batyrev--Borisov toric mirror symmetry. 
We hope that our work provides new examples of interesting mirror symmetry. 

We also investigate Brauer groups, which are believed to play an important role in mirror symmetry but have not been much explored in the literature (for example \cite{AM,BK}). 
An importance of the Brauer group $\mathrm{Br}(X)$ of a Calabi--Yau threefold $X$ lies in the fact \cite{Ad} that 
it is intimately related to another torsion group $H_1(X,\Z)$ and the derived category $\mathrm{D^bCoh}(X)$. 
An explicit computation shows the second main result. 
\begin{Thm}[Theorem \ref{Thm: brauer_grp}]
Let $X$ be a Calabi--Yau threefold of type K, then
$\mathrm{Br}(X)\cong \Z_2^{\oplus m}$, where $m$ is given by the
following.
\begin{equation*}
\begin{array}{|c|c|c|c|c|c|c|c|c|}
 \hline
 G & C_{2} & C_2 \times C_2 & C_2 \times C_2 \times C_2 & D_{6} &
D_{8} & D_{10} & D_{12} & C_2 \times D_8 \\ \hline
 m & 1 & 2 & 3 & 1 & 2 & 1 & 2 & 3 \\ \hline
\end{array}
\end{equation*}
\end{Thm}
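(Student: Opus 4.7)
My plan is to reduce the problem to integral cohomology and then to attack it with the Cartan--Leray spectral sequence of the \'etale Galois cover $\pi\colon S\times E\to X$. For a Calabi--Yau threefold $X$ one has $H^2(X,\mathcal{O}_X)=0$ (by Serre duality and $H^1(X,\mathcal{O}_X)=0$) and $H^3(X,\mathcal{O}_X)\cong\C$ is torsion-free, so the exponential sheaf sequence gives the standard identification
\[
\mathrm{Br}(X)\;=\;H^2(X,\mathcal{O}_X^*)_{\mathrm{tors}}\;\cong\;H^3(X,\Z)_{\mathrm{tors}},
\]
and it suffices to compute $H^3(X,\Z)_{\mathrm{tors}}$ in each case.

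The free $G$-action on $S\times E$ yields the Cartan--Leray spectral sequence
\[
E_2^{p,q}=H^p\bigl(G,\,H^q(S\times E,\Z)\bigr)\ \Longrightarrow\ H^{p+q}(X,\Z).
\]
By K\"unneth, $H^1(S\times E,\Z)=H^1(E,\Z)$, $H^2(S\times E,\Z)=H^2(S,\Z)\oplus\Z$ and $H^3(S\times E,\Z)=H^2(S,\Z)\otimes H^1(E,\Z)$ are torsion-free $G$-modules whose structure is explicit from \cite{HK}. Since $H^1(G,\Z)=\Hom(G,\Z)=0$ and $E_2^{0,3}$ is torsion-free, the torsion in $H^3(X,\Z)$ is filtered by the $E_\infty$-subquotients of
\[
H^1\bigl(G,H^2(S,\Z)\bigr),\qquad H^2\bigl(G,H^1(E,\Z)\bigr),\qquad H^3(G,\Z).
\]

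What remains is a case-by-case computation for the eight Galois groups of Theorem 1.1. For each $G$ I would (i) compute the three group cohomology groups above using standard resolutions for the cyclic factors and the Lyndon--Hochschild--Serre spectral sequence for the split extension $1\to H\to G\to C_2\to 1$, feeding in the explicit $G$-action on $H^2(S,\Z)$ from \cite{HK} and on $H^1(E,\Z)\cong\Z^{\oplus 2}$; (ii) determine the differentials $d_r$ ($r\geq 2$) landing on or leaving the diagonal $p+q=3$; and (iii) resolve the extension problem among the surviving $E_\infty^{p,3-p}$. The main obstacle is precisely steps (ii) and (iii): extracting the precise rank $m$, and in particular ruling out odd-order torsion for $G\in\{D_{6},D_{10},D_{12}\}$ where $|H|$ has an odd prime factor, requires a careful analysis of how the odd-order elements of $H$ act on $H^2(S,\Z)$ and its invariant/coinvariant sublattices. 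The lattice duality of Theorem~\ref{intro: main1}, together with the explicit lattice data from \cite{HK}, should be the essential inputs that make these computations tractable and force the answer to be purely $2$-torsion of the claimed rank $m$.
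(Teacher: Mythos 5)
There is a genuine gap: what you have written is a strategy, not a proof. You correctly reduce $\mathrm{Br}(X)$ to $\mathrm{Tor}(H^3(X,\Z))$ and correctly identify the Cartan--Leray $E_2$-terms $H^1(G,H^2(S\times E,\Z))$, $H^2(G,H^1(E,\Z))$, $H^3(G,\Z)$ as the possible sources of torsion, but the entire content of the theorem lives in the steps you defer: computing these group cohomology groups, controlling the differentials in and out of total degree $3$, and resolving the extension problem. You acknowledge this yourself ("the main obstacle is precisely steps (ii) and (iii)"), and you offer no argument for why the answer comes out to be $\Z_2^{\oplus m}$ with the stated $m$. A particular obstacle you underestimate: computing $H^1(G,H^2(S,\Z))$ requires the full integral $\Z[G]$-module structure of the K3 lattice, not merely the invariant/coinvariant sublattices $\Lambda^G$, $\Lambda^H$, $\Lambda^H_\iota$ tabulated in \cite{HK} and Theorem \ref{G-inv lattice}. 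For instance, $H^1(\langle\iota\rangle,\Lambda)$ depends on the number of rank-two "swap" summands in the decomposition $\Lambda\cong V_+\oplus V_-\oplus W^{\oplus b}$, and extracting that integer is itself a nontrivial discriminant-group computation (it is exactly what Lemma \ref{LEM_inv_disc} in the paper does). So the lattice data you cite does not, by itself, "force" the answer.

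For comparison, the paper avoids the spectral sequence entirely: it uses the locally isotrivial K3-fibration $f\colon X\to E/G\cong\PP^1$ with four Enriques singular fibers, and a Mayer--Vietoris argument identifies $\mathrm{Br}(X)$ with $\mathrm{Tor}\bigl(H_2(\overline{B^*},\Z)/Q\bigr)$, where $H_2(\overline{B^*},\Z)\cong\Lambda/\langle x-gx\rangle$ is a coinvariant quotient and $Q\cong\Z_2$ is generated by a vanishing cycle. The key inputs are then Lemma \ref{LEM_coinv_gen} (the coinvariant lattice $\Lambda_H$ equals the augmentation sublattice, proved by a determinant/discriminant count) and Lemma \ref{LEM_inv_disc}, after which the rank $m=n-1$ falls out of the discriminants of $\Lambda^G$, $\Lambda^H$, $\Lambda^H_\iota$. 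If you want to salvage your approach, you would essentially have to reprove these lemmas inside the spectral-sequence formalism; as it stands, the proposal does not establish the theorem.
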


As an application, we show that any derived equivalent Calabi--Yau threefolds of type K have isomorphic Galois groups of the minimal splitting coverings (Corollary \ref{cor: derived equiv}).
It is also interesting to observe that $H_1(X,\Z) \cong \mathrm{Br}(X) \oplus \Z^{\oplus2}_2$ holds in our case.
The role of the Brauer group in mirror symmetry is tantalising, and deserves further explorations.
For example, Gross discusses in \cite{Gr2} the subject in the context of SYZ mirror symmetry \cite{SYZ}.
Unfortunately, we are not able to unveil the role of Brauer group in mirror symmetry of a Calabi--Yau threefolds of type K at this point. 
We hope that our explicit computation is useful for future investigation. 

\subsection*{Structure of Paper}  
Section 2 sets conventions and recalls some basics of lattices and K3 surfaces.  
Section 3 begins with a brief review of the classification \cite{HK} of Calabi--Yau threefolds of type K and then 
investigates their topological properties, in particular their Brauer groups.  
Section 4 is devoted to the study of mirror symmetry, probing the A- and B-Yukawa couplings.  
Section 5 explores special Lagrangian fibrations inspired by the Strominger--Yau--Zaslow conjecture \cite{SYZ}.

\subsection*{Acknowledgement}
The authors are grateful to S. Hosono, J. Keum, and S. -T. Yau for inspiring discussions. 
Special thanks go to M. Gross for pointing out the importance of the Brauer groups. 
Much of this research was done when the first author was supported by Korea Institute for Advanced Study. 
The second author is supported by the Center of Mathematical Sciences and Applications at Harvard University. 


\section{Lattices and K3 surfaces}
In this section we summarize some basics of lattices and K3 surfaces, following \cite{BHPV, Ni2}. 

\subsection{Lattices} \label{SECT_lattice}

A lattice is a free $\Z$-module $L$ of finite rank together with a symmetric bilinear form $\langle *,**\rangle\colon L\times L\to \Z$. 
By an abuse of notation, we often denote a lattice simply by $L$. 
Given a basis, the bilinear form is represented by a Gram matrix and the discriminant $\disc(L)$ is the determinant of the Gram matrix.  
We define $L(\lambda)$ to be the lattice obtained by multiplying the bilinear form by $\lambda \in \Q$. 
We denote by $\langle a \rangle$ the lattice of rank $1$ generated by $x$ with $x^2:=\langle x, x\rangle=a$. 
A lattice $L$ is called even if $x^2 \in 2\mathbb{Z}$ for all $x \in L$.  
$L$ is non-degenerate if $\disc(L)\neq 0$ and unimodular if $\disc(L)=\pm1$.
If $L$ is a non-degenerate lattice, the signature of $L$ is the pair $(t_{+},t_{-})$ where $t_+$ and $t_-$ respectively denote
 the numbers of positive and negative eigenvalues of the Gram matrix.

A sublattice $M$ of a lattice $L$ is a submodule of $L$ with the bilinear form of $L$ restricted to $M$.
It is called primitive if $L/M$ is torsion free. 
We denote by $M^{\bot}_L$ (or simply $M^\bot$) the orthogonal complement of $M$ in $L$. 
We always assume that an action of a group $G$ on a lattice $L$ preserves the bilinear form. 
Then the invariant part $L^G$ and the coinvariant part $L_G$ are defined as
$$
L^G:=\{x\in L \bigm| g\cdot x=x \  (\forall g\in G) \},\quad
L_G:=(L^G)^\bot_L.
$$
We simply denote $L^{\langle g\rangle}$ and $L_{\langle g\rangle}$ by $L^{g}$ and $L_{g}$ respectively for $g \in G$.
If another group $H$ acts on $L$, we denote $L^G \cap L_H$ by $L^G_H$.

The hyperbolic lattice $U$ is the lattice given by the Gram matrix $\begin{bmatrix} 0 & 1\\ 1 & 0\\ \end{bmatrix}$.
The corresponding basis $e,f$ is called the standard basis. 
Let $A_{m}, \ D_{n}, \ E_{l}, \ (m\ge 1, \ n\ge 4, \ l =6,7,8)$ be the lattices defined by the corresponding Cartan matrices. 
Every indefinite even unimodular lattice is realized as an orthogonal sum of copies of $U$ and $E_{8}(\pm 1)$ in an essentially unique way, 
the only relation being $E_{8}\oplus E_{8}(-1)\cong U^{\oplus 8}$.  
Hence an even unimodular lattice of signature $(3,19)$ is isomorphic to $U^{\oplus 3} \oplus E_{8} (-1)^{\oplus 2}$, which is called the K3 lattice. 


Let $L$ be a non-degenerate even lattice.
We have a natural identification
\begin{equation*}
 L^{\vee}:=\mathrm{Hom}(L,\mathbb{Z})=
 \{ x \in L \otimes \Q \bigm| \langle x,y \rangle\in\Z \  (\forall y\in L) \}.
\end{equation*}
The discriminant group $A(L):=L^{\vee}/L$ is a finite abelian group of order $|\disc(L)|$ 
equipped with a quadratic map $q(L)\colon A(L) \rightarrow \mathbb{Q}/2\mathbb{Z}$ given by $x+L \mapsto x^2 + 2\mathbb{Z}$. 
The genus of $L$ is defined as the set of isomorphism classes of lattices $L'$ such that the signature of $L'$ is the same as that of $L$ and $q(L)\cong q(L')$.
(We sometimes write $q(L)\cong q(L')$ instead of $(A(L),q(L)) \cong (A(L'),q(L'))$.)

\begin{Thm}[\cite{Ni2,Om}] \label{Nik Genus}
Let $L$ be a non-degenerate even lattice.
If $L\cong U(n)\oplus L'$ for a positive integer $n$
 and a lattice $L'$,
 then the genus of $L$ consists of only one class. 
\end{Thm}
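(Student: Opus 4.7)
The plan is to exploit the fact that the $U(n)$-summand supplies an abundance of isotropic vectors to apply a classical Eichler-type uniqueness-in-genus argument. Two lattices in the same genus are by definition locally isomorphic at every prime and over $\R$, and the task is to promote these local isomorphisms to a global isomorphism; the presence of $U(n)$ as an orthogonal summand gives enough flexibility in the global orthogonal group of $L$ to do so. I would proceed in two steps.

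The first step is to show that any lattice $L_{1}$ in the genus of $L = U(n) \oplus L'$ itself splits as $L_{1} \cong U(n) \oplus L_{1}'$ with $L_{1}'$ in the genus of $L'$. The discriminant form decomposes as $q(L) \cong q(U(n)) \oplus q(L')$, and the summand $q(U(n))$ is canonically characterized inside $q(L_{1}) \cong q(L)$ by its hyperbolic structure on $(\Z/n)^{\oplus 2}$. Transporting this decomposition back to $L_{1}$ via Nikulin's theory of primitive embeddings \cite{Ni2} produces a primitive embedding $U(n) \hookrightarrow L_{1}$ whose orthogonal complement has signature and discriminant form matching those of $L'$, yielding the desired splitting.

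The second step is to prove the cancellation statement $U(n) \oplus L' \cong U(n) \oplus L_{1}'$ whenever $L'$ and $L_{1}'$ lie in a common genus. The standard isotropic basis $e,f \in U(n)$ yields explicit Eichler transvections in the orthogonal group of $L \otimes \Q$, and combined with strong approximation for the spin group this promotes the local isomorphism coming from genus equivalence to a global one. The main obstacle lies precisely here: one must check that the scaled hyperbolic plane $U(n)$---which is only hyperbolic up to the factor $n$---still provides the required local hyperbolicity at the primes dividing $n$. The key point is that $U(n) \otimes \Q_{p}$ is a hyperbolic plane over $\Q_{p}$ at every finite prime $p$, so the local spinor norm maps remain surjective and Eichler's classical cancellation argument (as recorded in \cite{Om}) goes through exactly as in the unscaled case.
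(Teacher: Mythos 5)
The paper does not actually prove this statement; it is quoted from Nikulin \cite{Ni2} and O'Meara \cite{Om}, and the intended argument is precisely the spinor-genus argument you sketch in your second step. That part of your proposal is sound in outline: $L$ is indefinite because $U(n)$ has signature $(1,1)$, so for $\rank L \ge 3$ Eichler's theorem identifies the class of $L$ with its spinor genus, and the genus contains a single spinor genus provided the local spinor norm groups $\theta(O(L\otimes\Z_p))$ are large enough at every prime. One correction here: the relevant computation concerns the \emph{integral} orthogonal groups, not the rational quadratic spaces, so the observation that $U(n)\otimes\Q_p$ is a hyperbolic plane over $\Q_p$ is not by itself the right justification. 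What one actually checks is that for $a,b\in\Z_p^{\times}$ the reflection in $ae+bf\in U(n)\otimes\Z_p$ preserves the lattice (it acts trivially on $L'\otimes\Z_p$) and has spinor norm $nab$ modulo squares, so products of two such reflections realize all of $\Z_p^{\times}(\Q_p^{\times})^2$ at every $p$; since $\Q$ has class number one, the idele-index formula then gives a single proper spinor genus. You should also flag that Eichler's theorem requires rank at least $3$, i.e.\ $L'\neq 0$; this is the only case the paper ever uses, but your statement nominally includes $L=U(n)$.

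The genuine gap is your first step, which is both unjustified and unnecessary. You assert that $q(U(n))$ is ``canonically characterized inside $q(L_1)\cong q(L)$ by its hyperbolic structure'' and that Nikulin's embedding theory then yields a primitive embedding $U(n)\hookrightarrow L_1$ with orthogonal complement in the genus of $L'$. Neither claim is automatic: when $\gcd(n,\disc L')>1$ the discriminant group $A(L)$ admits many decompositions and no distinguished copy of $A(U(n))$, and the existence of a primitive embedding of $U(n)$ into an arbitrary even lattice with prescribed signature and discriminant form is governed by Nikulin's general embedding criterion, whose hypotheses you would have to verify --- indeed, ``every lattice in the genus of $L$ splits off $U(n)$'' is essentially as hard as the theorem itself. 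Fortunately you do not need it: given any $L_1$ in the genus of $L$, the lattices $L$ and $L_1$ are locally isometric by definition of the genus, lie in the same spinor genus by the local computation above, and are therefore isometric by Eichler's theorem. Deleting your first step and running your second step directly on $L$ and $L_1$ (rather than as a cancellation statement for the complements) yields a complete proof.
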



Let $L$ be an even lattice and $M$ a module such that $L \subset M \subset L^{\vee}$. 
We say that $M$ equipped with the induced bilinear form $\langle *,**\rangle$ is an overlattice of $L$ if $\langle *,**\rangle$ takes integer values on $M$.
Suppose that $M$ is an overlattice of $L$.
Then the subgroup $W:=M/L\subset A(L)$ is isotropic, that is, the restriction of $q(L)$ to $W$ is zero.
There is a natural isomorphism $A(M) \cong W^\bot/W$, where
\begin{equation*}
 W^\bot = \{ x + L \in A(L) \bigm| \langle x , y \rangle \equiv 0 \bmod \Z ~
 (\forall y\in M) \}.
\end{equation*}


\begin{Prop}[\cite{Ni2}] \label{Nik Disc form}
Let $K$ and $L$ be non-degenerate even lattices. 
There exists a primitive embedding of $K$ into an even unimodular lattice $\Gamma$ such that $K^{\bot} \cong L$, 
if and only if $(A(K),q(K)) \cong (A(L),-q(L))$. 
More precisely, any such $\Gamma$ is of the form $\Gamma_{\lambda} \subset K^{\vee}\oplus L^{\vee}$ 
for some isomorphism $\lambda\colon(A(K),q(K)) \rightarrow (A(L),-q(L))$, 
where $\Gamma_{\lambda}$ is the overlattice of $K\oplus L$ corresponding to the isotropic subgroup
$$
\{(x,\lambda(x))\in A(K)\oplus A(L) \bigm| x \in A(K) \} \subset A(K) \oplus A(L). 
$$
\end{Prop}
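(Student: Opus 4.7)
The plan is to set up a bijective correspondence between primitive embeddings $K \hookrightarrow \Gamma$ into an even unimodular lattice $\Gamma$ with $K^{\bot} \cong L$ and isotropic subgroups of $A(K) \oplus A(L)$ of a certain shape. The main engine is the formalism of overlattices reviewed just before the statement: even overlattices of $K \oplus L$ correspond bijectively to isotropic subgroups $W \subseteq A(K \oplus L) = A(K) \oplus A(L)$, with the resulting overlattice unimodular precisely when $W = W^{\bot}$.

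For the forward direction, suppose $K \subset \Gamma$ is a primitive sublattice with $K^{\bot} = L$. The chain $K \oplus L \subset \Gamma = \Gamma^{\vee} \subset (K \oplus L)^{\vee} = K^{\vee} \oplus L^{\vee}$, where the middle equality uses unimodularity, produces the finite subgroup $W := \Gamma/(K \oplus L) \subset A(K) \oplus A(L)$, which is isotropic because $\Gamma$ is even. Primitivity of $K$ and of $L = K^{\bot}$ in $\Gamma$ force both projections $W \to A(K)$ and $W \to A(L)$ to be injective, since an element of $W$ whose $A(K)$-component vanishes lifts to an element of $\Gamma \cap L^{\vee} = L$ (and symmetrically). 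The discriminant identity $|\disc(K)|\cdot|\disc(L)| = |W|^{2}\,|\disc(\Gamma)|$, together with $|\disc(\Gamma)| = 1$, gives $|W|^{2} = |A(K)|\,|A(L)|$, and combined with $|W| \leq |A(K)|$ and $|W| \leq |A(L)|$ this forces both projections to be surjective. Hence $W$ is the graph of an isomorphism $\lambda \colon A(K) \to A(L)$, and the isotropy of $W$ translates directly into $q(K)(x) + q(L)(\lambda(x)) \equiv 0 \pmod{2\Z}$, meaning that $\lambda$ carries $q(K)$ to $-q(L)$. In particular $\Gamma = \Gamma_{\lambda}$, which is the explicit form claimed in the ``more precisely'' clause.

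For the reverse direction, given such a $\lambda$, I form $\Gamma_{\lambda}$ as the overlattice of $K \oplus L$ corresponding to the graph $W_{\lambda} = \{(x, \lambda(x)) : x \in A(K)\}$. The graph $W_{\lambda}$ is isotropic by the compatibility of $\lambda$ with $q(K)$ and $-q(L)$, so $\Gamma_{\lambda}$ is an even lattice. Unimodularity reduces to $W_{\lambda} = W_{\lambda}^{\bot}$, which follows from isotropy together with the count $|W_{\lambda}|^{2} = |A(K)|\,|A(L)| = |A(K) \oplus A(L)|$. Primitivity of $K$ in $\Gamma_{\lambda}$ and the equality $K^{\bot} = L$ in $\Gamma_{\lambda}$ both reduce to the observation that $(a, 0) \in W_{\lambda}$ forces $\lambda(a) = 0$ and hence $a = 0$.

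I expect the main obstacle to be the discriminant bookkeeping in the forward direction, specifically the surjectivity of the projections onto the \emph{full} discriminant groups. Isotropy and injectivity are almost formal, but surjectivity really uses unimodularity of $\Gamma$ through the order count, and it is this input that upgrades $\lambda$ from a map between subgroups to a global isomorphism $A(K) \to A(L)$. Once the counts are secured, the equivalence of primitive embeddings with isomorphisms of discriminant forms with reversed sign is essentially bookkeeping.
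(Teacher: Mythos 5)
The paper offers no proof of this proposition: it is quoted from Nikulin \cite{Ni2} and used as a black box, so there is no in-paper argument to compare against. Your proposal is a correct and complete reconstruction of Nikulin's standard argument, built on exactly the overlattice/isotropic-subgroup correspondence that the paper reviews immediately before the statement, and the structure (graph subgroup, injectivity of the two projections from primitivity, surjectivity from the index--discriminant count $|\disc(K)|\,|\disc(L)| = |W|^2\,|\disc(\Gamma)|$ with $\disc(\Gamma)=\pm1$) is the right one. Two small points you leave implicit are worth making explicit: (i) in the reverse direction, the count $|W_{\lambda}^{\bot}| = |A(K)\oplus A(L)|/|W_{\lambda}|$, needed to upgrade $W_{\lambda}\subset W_{\lambda}^{\bot}$ to equality, uses the non-degeneracy of the discriminant bilinear form on $A(K)\oplus A(L)$, which holds because $K$ and $L$ are non-degenerate; (ii) in the forward direction, the identification $\Gamma\cap L^{\vee}=L$ rests on the fact that $L=K^{\bot}$ is automatically primitive in $\Gamma$ (so $\Gamma\cap(L\otimes\Q)=L$), which is not a hypothesis but an easy general fact about orthogonal complements. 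Neither is a gap in substance.
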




\subsection{K3 Surfaces} 
A K3 surface $S$ is a simply-connected compact complex surface $S$ with trivial canonical bundle $\varOmega_{S}^{2}\cong \mathcal{O}_{S}$.
Then $H^2(S,\Z)$ with the cup product is isomorphic to the K3 lattice $U^{\oplus 3} \oplus E_{8} (-1)^{\oplus 2}$. 
It is also endowed with a weight-two Hodge structure
$$
H^2(S,\C)=
H^2(S,\Z)\otimes\C=H^{2,0}(S)\oplus H^{1,1}(S)\oplus H^{0,2}(S).
$$
Let $\omega_S$ be a nowhere vanishing holomorphic 2-form on $S$. 
The space $H^{2,0}(S)\cong\C$ is generated by the class of $\omega_S$, which we denote by the same $\omega_S$. 
The algebraic lattice $NS(S)$ and the transcendental lattice $T(S)$ of $S$ are primitive sublattices of $H^2(S,\Z)$ defined by
$$
NS(S):=\{ x\in H^2(S,\Z) \bigm| \langle x,\omega_S \rangle=0 \},\ \ \  T(S):=NS(S)^\bot_{H^2(S,\Z)}.
$$
Here we extend the bilinear form $\langle *,** \rangle$ on $H^2(S,\Z)$ to that on $H^2(S,\Z)\otimes\C$ linearly.
Note that $NS(S)$ is naturally isomorphic to the Picard group of $S$.

If a group $G$ acts on a K3 surface $S$, the action of $G$ induces a left action on $H^2(S,\Z)$ by
$g\cdot x:=(g^{-1})^* x$ for  $g\in G$ and $x\in H^2(S,\Z)$. 
An automorphism $g$ of $S$ is called symplectic if $g^*\omega_S=\omega_S$. 
An Enriques surface is the quotient of a K3 surface $S$ by a fixed point free involution $\iota$, which we call an Enriques involution. 
Then we have $\iota^* \omega_S=-\omega_S$, i.e. an anti-symplectic involution. 

\begin{Prop}[{\cite[{Section 2.3}]{AN}}] \label{PROP_enriques_involution}
Let $S$ be a K3 surface.
An involution $\iota \in \Aut(S)$ is an Enriques involution if and only if
$$
 H^2(S,\Z)^\iota\cong U(2)\oplus E_8(-2),\quad
 H^2(S,\Z)_\iota\cong U\oplus U(2)\oplus E_8(-2).
$$
\end{Prop}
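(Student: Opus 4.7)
The plan is to deduce this characterization from Nikulin's classification of anti-symplectic involutions on K3 surfaces in terms of the invariants $(r,a,\delta)$ of the invariant lattice $L := H^2(S,\Z)^\iota$. Both $L$ and $L_\iota := H^2(S,\Z)_\iota$ are primitive sublattices of the K3 lattice, and the general identity $2x = (x+\iota^* x)+(x-\iota^* x) \in L\oplus L_\iota$ shows that the discriminant groups $A(L)$ and $A(L_\iota)$ are annihilated by $2$, i.e.\ both lattices are $2$-elementary. Nikulin's invariants are then $r = \rk L$, the number $a$ with $A(L)\cong(\Z/2)^{a}$, and $\delta\in\{0,1\}$ recording whether $q(L)$ takes values only in $\Z/2\Z \subset \Q/2\Z$.

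For the forward direction, I would argue as follows. An Enriques involution is fixed-point-free, so $\iota$ cannot be symplectic (every symplectic involution on a K3 surface has exactly eight fixed points by Nikulin), forcing $\iota^*\omega_S = -\omega_S$; in particular $L$ has signature $(1, r-1)$. Nikulin's classification expresses the topology of $S^\iota$ in terms of $(r,a,\delta)$ and gives $S^\iota = \emptyset$ if and only if $(r,a,\delta) = (10, 10, 0)$. The unique primitively embedded $2$-elementary even lattice of signature $(1,9)$ with these invariants is $U(2)\oplus E_8(-2)$, which yields the first isomorphism.

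The coinvariant lattice is then read off via Proposition~\ref{Nik Disc form}: unimodularity of the K3 lattice provides a primitive embedding $L\hookrightarrow H^2(S,\Z)$ with orthogonal complement $L_\iota$, so $(A(L_\iota), q(L_\iota)) \cong (A(L), -q(L))$. Since $q(L)$ is $2$-torsion, $-q(L) \cong q(L)$, and $L_\iota$ is again $2$-elementary with $a = 10$ and $\delta = 0$; its signature is $(3,19) - (1,9) = (2,10)$ and its rank is $12$. The lattice $U\oplus U(2)\oplus E_8(-2)$ has exactly these invariants, and Theorem~\ref{Nik Genus}, applied through the orthogonal summand $U$, guarantees that the genus consists of a single class, forcing $L_\iota \cong U\oplus U(2)\oplus E_8(-2)$.

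The converse is now the reverse reading. Given the stated isomorphisms, the signature $(1,9)$ of $L$ precludes $\omega_S \in L\otimes\C$ (otherwise the positive-definite real plane $(H^{2,0}\oplus H^{0,2})\cap H^2(S,\R)$ would contribute at least $2$ to the positive signature of $L$), whence $\iota^*\omega_S = -\omega_S$; so $\iota$ is anti-symplectic with Nikulin invariants $(10,10,0)$, and another application of his fixed-locus formula gives $S^\iota = \emptyset$. The main obstacle is the appeal to Nikulin's machinery on $2$-elementary lattices and on the topology of fixed loci of anti-symplectic involutions; once those results from \cite{Ni3} are invoked, the rest is a routine verification of ranks, signatures, and discriminant forms.
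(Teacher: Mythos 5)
The paper offers no proof of this proposition at all: it is imported verbatim from \cite{AN} (Section 2.3), so there is no internal argument to compare yours against. Your sketch is essentially the standard proof of that cited result and is sound: the two external inputs you invoke from Nikulin --- that an even indefinite $2$-elementary lattice is determined up to isomorphism by $(t_+,t_-,a,\delta)$, and that for an anti-symplectic involution on a K3 surface the fixed locus is empty exactly when $(r,a,\delta)=(10,10,0)$ --- are precisely what \cite{AN} relies on, and your bookkeeping is correct ($U(2)\oplus E_8(-2)$ has invariants $(10,10,0)$ and signature $(1,9)$; the complement has signature $(2,10)$, rank $12$, and the same $(a,\delta)=(10,0)$, so Theorem \ref{Nik Genus} applied through the $U$ summand, or Nikulin's $2$-elementary uniqueness, identifies it as $U\oplus U(2)\oplus E_8(-2)$). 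The converse is also fine: signature $(1,9)$ forces $\iota^*\omega_S=-\omega_S$, and the fixed-locus table then gives $S^\iota=\emptyset$, which for an involution on a K3 surface is exactly the Enriques condition. One small imprecision: the claim that $-q(L)\cong q(L)$ ``since $q(L)$ is $2$-torsion'' is false for general $2$-elementary forms (e.g.\ $q(A_1)=\langle 1/2\rangle\not\cong\langle -1/2\rangle$); the correct justification here is that $\delta=0$, so $q(L)$ takes values in $\Z/2\Z\subset\Q/2\Z$, where negation is the identity --- and in any case all your argument needs is that negation preserves $a$ and $\delta$, which it does.
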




\section{Calabi--Yau Threefolds of Type K} 

\subsection{Classification and Construction}
\label{SECT_construction}
We begin with a review of \cite{OS, HK}.  
By the Bogomolov decomposition theorem,
a Calabi--Yau threefold $X$ with infinite fundamental group admits an \'{e}tale Galois covering 
either by an abelian threefold or by the product of a K3 surface and an elliptic curve.   
We call $X$ of type A in the former case and of type K in the latter case. 
Among many candidates of such coverings, we can always find a unique smallest one, up to isomorphism as a covering, and we call it the minimal splitting covering.  
The full classification of Calabi--Yau threefolds with infinite fundamental group was completed in \cite{HK}. 
Here we focus on type K: 

\begin{Thm}[\cite{HK}] 
There exist exactly eight Calabi--Yau threefolds of type K, up to deformation equivalence.
The equivalence class is uniquely determined by the Galois group of the minimal splitting covering, 
which is isomorphic to one of the following combinations of cyclic and dihedral groups
$$
C_{2},\ C_2 \times C_2, \ C_2 \times C_2 \times C_2, \ D_{6}, \ D_{8}, \ D_{10}, \ D_{12}, \ \text{and} \ \ C_2 \times D_8. 
$$ 
\end{Thm}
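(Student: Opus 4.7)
The plan is to reduce the classification to an enumeration of finite groups $G$ that can act freely and holomorphically on a product $S\times E$ of a K3 surface and an elliptic curve, preserving the product Calabi--Yau structure, so that the quotient $S\times E \to (S\times E)/G$ is the minimal splitting covering.

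First I would decompose $G$ using its actions on the canonical bundles of the two factors. The $G$-invariance of the 3-form $\omega_S\wedge dz \in H^0(S\times E,\varOmega^3_{S\times E})$, combined with $h^{2,0}(X)=h^{1,0}(X)=0$, forces $G = H \rtimes C_2$, where $H$ acts symplectically on $S$ and purely by translations on $E$, while the non-trivial coset acts anti-symplectically on $S$ and as $z\mapsto -z + t$ on $E$. Minimality of $\pi$ forces the translation homomorphism $H \to E$ to be injective; otherwise $\pi$ would factor through a smaller splitting covering. Hence $H$ is a finite abelian subgroup of the torsion $E[\infty]\cong(\Q/\Z)^{\oplus 2}$, and in particular is generated by at most two elements.

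Next I would enumerate the possibilities for $H$. Freeness of the $G$-action on $S\times E$ forces every non-trivial $h\in H$ — which necessarily has fixed points on $S$ by the holomorphic Lefschetz formula for finite-order symplectic K3 automorphisms — to act non-trivially on $E$; this is automatic from the injectivity above. Mukai's classification of finite abelian symplectic groups of K3 automorphisms, combined with the compatibility requirement that $H$ commute with an anti-symplectic involution whose combined action with $z\mapsto -z+t$ on $E$ is fixed-point-free, restricts $H$ to a short finite list. For each candidate pair $(H,\text{action of }C_2)$, I would invoke Nikulin's discriminant-form machinery (Proposition \ref{Nik Disc form} and Theorem \ref{Nik Genus}), together with Proposition \ref{PROP_enriques_involution} for the anti-symplectic generator, to check whether the required invariant and coinvariant sublattices embed primitively in the K3 lattice with the prescribed $G$-action; the eight surviving cases are precisely those in the statement.

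Finally, for each of the eight groups $G$, I would construct an explicit family $(S,E,G)$ realising the data (as in \cite{OS, HK}) and deduce deformation equivalence within each class from the connectedness of the period domain of marked $G$-K3 surfaces of the prescribed lattice type. The main technical obstacle is the free-action analysis: ruling out fixed points for every element and every product of elements of $G$ on $S\times E$ imposes simultaneous constraints on the translation lattice $H\hookrightarrow E$, on the fixed loci of elements of $H$ on $S$, and on the fixed locus of the Enriques-type involution, while excluding spurious larger candidate groups requires detailed case-by-case invariant-lattice computations. These are the technical core of \cite{HK}.
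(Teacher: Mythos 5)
Your proposal follows essentially the same route as the proof this theorem actually receives: the present paper only cites \cite{OS,HK}, and its summary of that proof (Proposition \ref{PROP_OS} together with the Horikawa-model construction in Section \ref{SECT_construction}) is precisely your decomposition $G=H\rtimes C_2$ with $H$ acting symplectically on $S$ and by translations on $E$, the nontrivial coset acting by Enriques involutions on $S$ and as $z\mapsto -z+t$ on $E$, followed by the lattice-theoretic case analysis via Nikulin's discriminant forms and a uniqueness/connectedness argument for deformation equivalence. Two small corrections to the sketch: the classification of finite \emph{abelian} symplectic automorphism groups of K3 surfaces that you need is due to Nikulin \cite{Ni} rather than Mukai, and you should state explicitly that faithfulness of the $G$-action on $E$ combined with $\iota$ acting as $z\mapsto -z+t$ forces the relation $\iota h\iota=h^{-1}$, since that is exactly where the dihedral groups $D_6,D_8,D_{10},D_{12}$ in the final list come from.
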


We now briefly summarize the construction in \cite{HK}.  
Let $X$ be a Calabi--Yau threefold of type K and $\pi \colon S \times E \rightarrow X$ its minimal splitting covering with Galois group $G$. 
There exists a canonical isomorphism $\Aut(S\times E)\cong \Aut(S)\times \Aut(E)$, which induces a faithful $G$-action on each $S$ and $E$:
\[\xymatrix{
\Aut(S) \ar@{}[d]|\bigcup & \ar[l]_{p_1}\Aut(S \times E) \ar@{}[d]|\bigcup \ar[r]^{p_2} & \Aut(E) \ar@{}[d]|\bigcup \\
p_1(G) & \ar[l]^{p_1|_{G}}_{\cong} G \ar[r]_{p_2|_{G}}^{\cong} &  p_2(G) . 
}\]

\begin{Prop}[\cite{OS, HK}] \label{PROP_OS}
Let $H:=\Ker(G\rightarrow \GL(H^{2,0}(S)))$ and take any $\iota \in G \setminus H$. 
Then the following hold:
\begin{enumerate}
\item $\ord(\iota)=2$ and $G=H\rtimes \langle \iota \rangle$, where the semi-direct product structure is given by $\iota h\iota=h^{-1}$ for all $h\in H$;
\item $g$ acts on $S$ as an Enriques involution if $g\in G\setminus H$; 
\item $\iota$ acts on $E$ as $-1_{E}$ and $H$ as translations of the form $\langle t_{a}\rangle \times \langle t_{b}\rangle\cong C_{n}\times C_{m}$ under an appropriate origin of $E$. 
Here $t_a$ and $t_b$ are translations of order $n$ and $m$ respectively for some $(n,m)\in \{(1,k)(1\le k \le 6), \ (2,2), \ (2,4)\}$. 
\end{enumerate}
\end{Prop}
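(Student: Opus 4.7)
The plan is to exploit the interplay between the holomorphic forms on $S$ and $E$ together with the freeness of the $G$-action on $S\times E$, and to extract almost all structural statements from explicit computations on the elliptic curve side.

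First I would introduce two characters $\chi_S,\chi_E\colon G\to\C^\times$ defined by $g^*\omega_S=\chi_S(g)\omega_S$ and $g^*dz=\chi_E(g)dz$. Since $\omega_S\wedge dz$ descends to a nowhere-vanishing section of $\varOmega_X^3$, the product $\chi_S\chi_E$ is the trivial character, and by definition $H=\Ker\chi_S=\Ker\chi_E$. Thus every $h\in H$ preserves $dz$ and acts on $E$ as a translation, while any $g\in G\setminus H$ acts on $E$ as $z\mapsto\chi_E(g)z+\beta$ with $\chi_E(g)\ne 1$.

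For statement (1), since $\iota^2\in H$ acts as a translation on $E$, the scalar $\chi_E(\iota)^2$ must equal $1$; combined with $\iota\notin H$ this forces $\chi_E(\iota)=-1$ and so $\iota_E(z)=-z+\beta$, which squares to the identity. Because $G\to\Aut(E)$ is injective via $p_2$, we conclude $\iota^2=1$, so $\ord(\iota)=2$ and $G=H\rtimes\langle\iota\rangle$. The semi-direct relation is checked by a direct computation on $E$: for $h_E=t_b$, one has $\iota_E\circ t_b\circ\iota_E=t_{-b}=h_E^{-1}$, hence $\iota h\iota=h^{-1}$ again by faithfulness on $E$. For statement (2), any $g\in G\setminus H$ can be written $g=h\iota$, and $g^2=h\iota h\iota=hh^{-1}=1$ using the conjugation relation just proved; on the elliptic side $g_E(z)=-z+\beta'$ has fixed points, so freeness of the $G$-action on $S\times E$ forces $g_S$ to be fixed-point-free on $S$. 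Combined with $\chi_S(g)=-1$, $g_S$ is an anti-symplectic fixed-point-free involution, i.e.\ an Enriques involution in the sense of Proposition~\ref{PROP_enriques_involution}. For the first half of statement (3), translating the origin of $E$ by $\beta/2$ puts $\iota_E$ in the normal form $z\mapsto -z$, while $H$ is realized faithfully as a finite subgroup of the translation group of $E$, hence is isomorphic to some $C_n\times C_m$ by the structure of torsion in an elliptic curve.

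The genuinely hard part is pinning down the admissible pairs $(n,m)$. This requires combining three independent constraints on $H$: as a group of symplectic automorphisms of a K3 surface, $H$ must appear in Nikulin's classification of finite abelian symplectic groups; it must be normalized inside $\Aut(S)$ by an involution $\iota$ whose conjugation action is inversion $h\mapsto h^{-1}$; and the resulting pair $(H,\iota)$ must admit a geometric realization in which $\iota$ is Enriques and in which $H\times\langle\iota\rangle$ acts freely on $S\times E$ for some compatible $E$-action. Sifting Nikulin's list against these conditions, and in particular ruling out groups that would force $\iota$ to acquire fixed points or fail to be anti-symplectic, leaves precisely $(n,m)\in\{(1,k):1\le k\le 6\}\cup\{(2,2),(2,4)\}$. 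This classification step, rather than the structural parts (1) and (2), is where the bulk of the work of \cite{OS, HK} lies.
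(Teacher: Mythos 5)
This proposition is quoted from \cite{OS,HK}; the paper offers no proof of its own, so I can only assess your argument on its merits. The framework you set up — the characters $\chi_S,\chi_E$ with $\chi_S\chi_E=1$ coming from invariance of $\omega_S\wedge dz$, translations versus general affine maps on $E$, and faithfulness of the action on each factor — is exactly the right one, and part (2) and the first half of (3) are handled correctly \emph{given} part (1).

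The genuine gap is in (1). You write ``since $\iota^2\in H$ acts as a translation on $E$, the scalar $\chi_E(\iota)^2$ must equal $1$'' — but $\iota^2\in H$ means precisely $\chi_E(\iota)^2=1$ (as $H=\Ker\chi_E$), so this step assumes what it is trying to prove. A priori $\chi_E(\iota)$ could be a primitive root of unity of order $3$, $4$ or $6$ (when $j(E)=0$ or $1728$), and nothing said up to that point excludes it. The standard repair, which is the real content of Oguiso--Sakurai's lemma, runs as follows: for any $g\in G\setminus H$ the affine map $z\mapsto\chi_E(g)z+\beta$ with $\chi_E(g)\neq1$ has a fixed point on $E$ (multiplication by $\chi_E(g)-1$ is an isogeny, hence surjective), so freeness of the action on $S\times E$ forces $g$ to act freely on $S$; the holomorphic Lefschetz fixed point formula then gives $0=1+\chi_S(g)^{-1}$, i.e.\ $\chi_S(g)=-1$ for every $g\in G\setminus H$. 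Hence $\chi_S(G)=\{\pm1\}$, $[G:H]=2$, and $\ord(\iota)=2$. Note that you do derive the freeness of $g_S$ — but only in part (2), \emph{after} the order-two claim has already been used, and you never invoke the Lefschetz formula at all. Separately, the determination of the admissible pairs $(n,m)$ in (3) is only a sketch: ``sifting Nikulin's list'' does not by itself rule out, say, $C_7$ or $C_3\times C_3$, and the actual exclusions in \cite{OS,HK} are lattice-theoretic rather than a matter of consulting the table. Since you explicitly defer that step to the references, as the paper itself does for the whole proposition, I would not count it as a gap of the same kind, but it should not be presented as if the three listed constraints visibly produce the list.
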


Conversely, such a $G$-action yields a Calabi--Yau threefold $X:=(S\times E)/G$ of type K. 
Proposition \ref{PROP_OS} provides us with a complete understanding of the $G$-action on $E$, and 
therefore the classification essentially reduces to that of K3 surfaces equipped with actions described in Proposition \ref{PROP_OS}.  

\begin{Def} \label{def_cyg_k3}
Let $G$ be a finite group. 
We say that an action of $G$ on a K3 surface $S$ is Calabi--Yau if the following hold:
\begin{enumerate}
\item $G=H\rtimes \langle \iota \rangle$ with $H\cong C_n\times C_m$ for some $(n,m)\in \{(1,k)\ (1\le k \le 6), \ (2,2), \ (2,4)\}$, and $\ord(\iota)=2$.  
The semi-direct product structure is given by $\iota h\iota=h^{-1}$ for all $h\in H$; 
\item $H$ acts on $S$ symplectically, and any $g\in G\setminus H$ acts as an Enriques involution.
\end{enumerate}
\end{Def}
In what follows, $G$ is always one of the finite groups listed above. 
A basic example of a K3 surface which the reader could bear in mind is the following Horikawa model. 

\begin{Prop}[{Horikawa model \cite[Section V, 23]{BHPV}}] \label{K3->P1P1} 
The double covering $S\rightarrow \mathbb{P}^{1}\times \mathbb{P}^{1}$ branching along a bidegree $(4,4)$-divisor $B$ is a K3 surface if it is smooth. 
We denote by $\theta$ the covering involution on $S$. 
Assume that $B$ is invariant under the involution $\lambda$ of $\PP^1 \times \PP^1$ given by $(x,y)\mapsto (-x,-y)$, where $x$ and $y$ are the inhomogeneous coordinates of $\PP^1\times \PP^1$.  
The involution $\lambda$ lifts to a symplectic involution of $S$. 
Then $\theta \circ \lambda$ is an involution of $S$ without fixed points unless $B$ passed through one of the four fixed points of $\lambda$ on $\PP^1 \times \PP^1$. 
The quotient surface $T=S/\langle \theta \circ \lambda \rangle$ is therefore an Enriques surface. 
\[\xymatrix{
 \ar[d]_{/\langle \theta \rangle } S \ar[r]^{\id} & S  \ar[d]^{/ \langle \theta \circ \lambda \rangle }\\
 \mathbb{P}^{1}\times \mathbb{P}^{1} & T \\
}\]
\end{Prop}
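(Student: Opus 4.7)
The plan is to verify the four assertions of the proposition in turn using the explicit local model of the double cover. First I would confirm that $S$ is a K3 surface via the standard double-cover formulae. Setting $Y = \PP^1 \times \PP^1$, $L = \mathcal{O}_Y(2,2)$, and $B \in |2L|$ smooth, the decomposition $f_* \mathcal{O}_S \cong \mathcal{O}_Y \oplus L^{-1}$ together with $K_S \cong f^{*}(K_Y + L)$ gives $K_S \cong \mathcal{O}_S$ and $H^1(S, \mathcal{O}_S) = H^1(Y, \mathcal{O}_Y) \oplus H^1(Y, \mathcal{O}_Y(-2,-2)) = 0$ by K\"unneth. Smoothness of $S$ is equivalent to smoothness of $B$, so $S$ is a smooth surface with trivial canonical bundle and vanishing irregularity, hence a K3.

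Next I would analyse the lift of $\lambda$ using affine coordinates in which $S$ is cut out by $z^2 = b(x,y)$ for a defining polynomial $b$ of $B$. The hypothesis $\lambda^* B = B$ forces $\lambda^* b = \pm b$, and after rescaling $b$ (which does not change the isomorphism class of the cover) we may assume $\lambda^* b = b$. The two lifts of $\lambda$ to $S$ are then $\widetilde\lambda_{\pm} : (x,y,z) \mapsto (-x,-y,\pm z)$, both involutions; they globalise once one fixes a $\lambda$-equivariant structure on $L$. A nowhere-vanishing holomorphic $2$-form on $S$ is $\omega_S = dx \wedge dy / z$, and one computes $\widetilde\lambda_{+}^{*}\omega_S = \omega_S$, $\widetilde\lambda_{-}^{*} \omega_S = -\omega_S$, while $\theta^{*} \omega_S = -\omega_S$. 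Taking $\lambda := \widetilde\lambda_{+}$ as the symplectic lift, $\theta \circ \lambda = \widetilde\lambda_{-}$ is an anti-symplectic involution of $S$.

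The fixed-point analysis is then direct. A point $(x,y,z) \in S$ is fixed by $\theta \circ \lambda$ iff $x = -x$, $y = -y$, and $z = 0$; equivalently $(x,y)$ is one of the four $\lambda$-fixed points of $Y$ and $b(x,y) = z^2 = 0$, i.e.\ that point lies on $B$. Hence $\theta \circ \lambda$ is fixed-point free exactly when $B$ avoids the four $\lambda$-fixed points on $Y$, as claimed. Under this assumption the quotient $T = S/\langle \theta \circ \lambda \rangle$ is the quotient of a K3 surface by a fixed-point-free involution and is therefore an Enriques surface by definition.

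The only genuine subtlety in this argument is the globalisation of the local lifts $\widetilde\lambda_{\pm}$ to honest automorphisms of $S$, which reduces to choosing a $\lambda$-equivariant structure on $L$; this is routine bookkeeping once conventions are fixed, and is implicit in the fact that $(\PP^1 \times \PP^1, B)$ carries a $\lambda$-action. The rest of the proof is sign-chasing on the explicit form $\omega_S = dx \wedge dy / z$, and no further geometric input is needed.
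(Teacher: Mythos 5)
The paper offers no proof of this proposition: it is quoted as a classical fact with a pointer to \cite[Section V, 23]{BHPV}, so there is no in-paper argument to compare against. Your verification is the standard one and is essentially correct. The double-cover formulae $f_*\mathcal{O}_S\cong\mathcal{O}_Y\oplus L^{-1}$ and $K_S\cong f^*(K_Y+L)$ give $K_S\cong\mathcal{O}_S$ and $q(S)=0$ (whence $S$ is a K3 by the Enriques--Kodaira classification, which also supplies the simple connectedness required by the paper's definition of a K3 surface); the sign-chase on $\omega_S=dx\wedge dy/z$ correctly identifies the symplectic lift $\widetilde\lambda_+$ and exhibits $\theta\circ\lambda=\widetilde\lambda_-$ as anti-symplectic; the fixed-point computation is right in every affine chart; and the quotient by a fixed-point-free involution is an Enriques surface by the paper's own definition.

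The one step that does not work as written is the normalisation $\lambda^*b=b$. Rescaling $b$ by a nonzero constant $c$ gives $\lambda^*(cb)=\epsilon\, cb$ with the same sign $\epsilon$ as before, so rescaling cannot convert $\lambda^*b=-b$ into $\lambda^*b=b$; and when $\epsilon=-1$ the lifts of $\lambda$ are $(x,y,z)\mapsto(-x,-y,\pm iz)$, which have order $4$, so $\lambda$ does not lift to an involution at all. The correct way out is to note that at a $\lambda$-fixed point $p$ one has $b(p)=(\lambda^*b)(p)=\epsilon\, b(p)$, so $\epsilon=-1$ forces $b$ to vanish at all four fixed points of $\lambda$ --- precisely the degenerate configuration the proposition excludes. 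Hence under the standing hypothesis that $B$ avoids those four points, $\lambda^*b=b$ holds automatically and your argument goes through; in all of the paper's applications the branch curve is in any case cut out by a genuinely invariant polynomial.
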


The classical theory of Enriques surfaces says that any generic K3 surface with an Enriques involution is realized as a Horikawa model (\cite[Propositions 18.1, 18.2]{BHPV}). 

\begin{Ex}[Enriques Calabi--Yau threefold] \label{Enr CY3}
Let $S$ be a K3 surface with an Enriques involution $\iota$ and $E$ an elliptic curve with negation $-1_E$.
The free quotient $X:=(S\times E)/\langle (\iota, -1_E) \rangle$ is the simplest Calabi--Yau threefold of type K with Galois group $G=C_2$, known as the Enriques Calabi--Yau threefold.
\end{Ex}

We saw that the Horikawa model gives rise to the Enriques Calabi--Yau threefold.  
In order to obtain other Calabi--Yau threefolds of type K, we consider special classes of Horikawa models as follows. 
We introduce a group $G_0:=H\rtimes C_2$ which is isomorphic to $G$ in Definition \ref{def_cyg_k3} as an abstract group and we denote by $\lambda$ the generator of the second factor $C_2$. 
Let $\rho_{1},\rho_{2} \colon G_0\rightarrow \PGL(2,\C)$ be 2-dimensional complex projective representations.   
We thus get a $G_0$-action $\rho_{1} \times\rho_{2}$ on $\PP^1 \times \PP^1$. 
Suppose that there exists a $G_0$-stable smooth curve $B$ of bidegree $(4,4)$. 
We then obtain a Horikawa K3 surface $S$ as the double covering $\pi \colon S\rightarrow \PP^1 \times \PP^1$ branching along $B$ 
and the $G_0$-action on $\PP^1 \times \PP^1$ lifts to $S$ as a symplectic $G_0$-action. 
We further assume that the curve $B$ does not pass through any of fixed points of $g\in G_0 \setminus H$. 
With the same notation as in Proposition \ref{K3->P1P1}, it can be checked that the symplectic $G_0$-action and the covering transformation $\theta$ commute. 
\[\xymatrix{
  & \ar[d]^{/\langle \theta \rangle } S \ar[r]^{\id} & S  \ar[d]^{/ \langle \theta \circ \lambda \rangle }\\
 G_0 \ar@/^6mm/[r]^{\rho_{1} \times\rho_{2}} \ar@/^8mm/[ur]^{\exists symplectic}& \mathbb{P}^{1}\times \mathbb{P}^{1} & T \\
}\]

By twisting $\lambda$ by $\theta$, we obtain a $G$-action on $S$, i.e. 
$$
\Aut(S)\supset G_0\times \langle \theta \rangle \supset H \rtimes \langle \theta \circ \lambda \rangle = G.
$$
It is shown in \cite{HK} that the $G$-action on $S$  is a Calabi--Yau action and a generic K3 surface equipped with a Calabi--Yau action is realized (not necessarily uniquely) in this way. 
To put it another way, there exist projective representations $\rho_{1},\rho_{2} \colon G_0\rightarrow \PGL(2,\C)$ which satisfy all the assumptions mentioned above. 
In this paper, we do not need explicit presentations of Calabi--Yau actions and thus close this section by just providing two examples. 

\begin{Ex} \label{Ex $D_{10}$}
Suppose that $G = D_{12} = \langle a,b \bigm| a^{6}=b^{2}=baba=1 \rangle$. 
Let $x,y$ (resp.\ $z,w$) be homogeneous coordinates of the first (resp.\ second) $\PP^1$.
For $i=1,2$, we define $\rho_{i} \colon D_{12}\rightarrow \PGL(2,\C)$ by
$$
a\mapsto  \begin{bmatrix}
                \zeta_{12}^{i}    &  0  \\
                0  &   \zeta_{12}^{12-i} \\
                \end{bmatrix}, \ \ \ 
b\mapsto  \begin{bmatrix}
                0  &  1  \\
                1  & 0   \\
                \end{bmatrix},
$$
where $\zeta_{k}$ denotes a primitive $k$-th root of unity.  
A basis of the space of $D_{12}$-invariant polynomials of bidegree $(4,4)$ are given by $x^{4}z^{4}+y^{4}w^{4}, x^{4}zw^{3}+y^{4}z^{3}w, x^{2}y^{2}z^{2}w^{2}$. 
 A generic linear combination of these cuts out a smooth curve of bidegree $(4,4)$. 
\end{Ex}

\begin{Ex} \label{EX_d8c2}
Suppose that 
$
G = D_{8}\times C_{2}=\langle a,b,c \bigm| a^{4}=b^{2}=baba=1,ac=ca,bc=cb \rangle
$.
For $i=1,2$, we define $\rho_{i} \colon D_{8}\times C_{2}\rightarrow \PGL(2,\C)$ by
$$
a\mapsto  \begin{bmatrix}
                \zeta_{8}    &  0  \\
                0  &   \zeta_{8}^{7} \\
                \end{bmatrix}, \ \ \ 
b\mapsto  \begin{bmatrix}
                0  &  1  \\
                1  & 0   \\
                \end{bmatrix}, \ \ \
c\mapsto  \begin{bmatrix}
                \sqrt{-1}^{i-1}  &  0  \\
                0  & \sqrt{-1}^{1-i}   \\
                \end{bmatrix}. 
$$
A basis of the space of $D_{8}\times C_{2}$-invariant polynomials of bidegree $(4,4)$ are given by $x^{4}z^{4}+y^{4}w^{4}, x^{4}w^{4}+y^{4}z^{4}, x^{2}y^{2}z^{2}w^{2}$. 
 A generic linear combination of these cuts out a smooth curve of bidegree $(4,4)$. 
\end{Ex}


\subsection{Lattices $H^{2}(S,\Z)^{G}$ and $H^2(S,\Z)_{C_2}^H$}

In what follows, we fix the decomposition $G=H \rtimes C_2$ with $C_2=\langle \iota \rangle$ as in Proposition \ref{PROP_OS}.
We define two sublattices of $H^2(S,\Z)$ by $M_G:=H^{2}(S,\Z)^{G}$ and $N_G:=H^2(S,\Z)_{C_2}^H$. 
The lattices $M_G$ and $N_G$ depend only on $G$ (as an abstract group) and a generic K3 surface $S$ with a Calabi--Yau $G$-action has transcendental lattice $N_G$ (see \cite{HK} for details).

\begin{Prop}[cf.\ \cite{Ni,Ha}] \label{symp-inv lattice}
Let $F$ be a group listed in the table below. 
Suppose that $F$ acts on a K3 surface $S$ faithfully and symplectically. 
The isomorphism class of the invariant lattice $H^{2}(S,\Z)^{F}$ is given by the following.
\begin{center}
 \begin{tabular}{|c|c|c|} \hline 
$F$  & $H^{2}(S,\Z)^{F}$ & $\rank H^{2}(S,\Z)^{F}$ \\ \hline
$C_{2}$ & $U^{\oplus 3}\oplus E_{8}(-2)$ & $14$ \\ \hline
$C_{2} \times C_2$ & $U\oplus U(2)^{\oplus 2}\oplus D_{4}(-2)$ & $10$ \\ \hline
$C_{2} \times C_2 \times C_2$ & $U(2)^{\oplus 3}\oplus \langle -4\rangle^{\oplus 2}$ & $8$ \\ \hline
$C_{3}$ & $U\oplus U(3)^{\oplus 2}\oplus A_{2}(-1)^{\oplus2}$ & $10$ \\ \hline
$C_{4}$ & $U\oplus U(4)^{\oplus 2}\oplus \langle 2\rangle^{\oplus 2}$ & $8$ \\ \hline
$D_{6}$ & $U(3)\oplus A_{2}(2)\oplus A_{2}(-1)^{\oplus2}$ & $8$ \\ \hline
$D_{8}$ & $U\oplus \langle 4\rangle^{\oplus2}\oplus \langle -4\rangle^{\oplus3}$ & $7$ \\ \hline
$C_{5},  D_{10}$ & $U\oplus U(5)^{2}$ & $6$ \\ \hline
$C_{6},  D_{12}$ & $U\oplus U(6)^{2}$ & $6$ \\ \hline
$C_{2}\times C_{4}, C_{2}\times D_{8}$ & $U(2)\oplus \langle 4\rangle^{\oplus2}\oplus \langle -4\rangle^{\oplus2}$ & $6$ \\  \hline
\end{tabular}
 \end{center}
\end{Prop}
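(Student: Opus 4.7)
The plan is to determine the invariant lattice $M_F := H^2(S,\Z)^F$ indirectly, by first pinning down the coinvariant lattice $\Omega_F := H^2(S,\Z)_F$ and then transferring the information to $M_F$ via Nikulin's discriminant-form machinery, exploiting the fact that the K3 lattice is even unimodular.

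First I would fix the signature. Since $F$ acts faithfully and symplectically, $\omega_S$ is $F$-invariant, so $\Omega_F$ lies in $NS(S)\otimes\R$ and is negative definite, while $M_F$ has signature $(3, \rank M_F - 3)$: the three positive directions come from $\R\,\mathrm{Re}(\omega_S)\oplus\R\,\mathrm{Im}(\omega_S)$ together with an $F$-invariant Kähler class obtained by averaging. The isomorphism class of $\Omega_F$ should then be read off from the literature. For the abelian groups $F$ in the list this is Nikulin's classical result: $\Omega_F$ depends only on $F$ as an abstract group, its rank is computed from the Lefschetz fixed-point formula applied to each nontrivial $g\in F$, and explicit models are tabulated. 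For the non-abelian cases $D_6,D_8,D_{10},D_{12},C_2\times D_8$ the corresponding determination follows from the extension of Nikulin's work to finite (not necessarily abelian) symplectic groups, namely \cite{Ha}.

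Next, because $\Omega_F\hookrightarrow H^2(S,\Z)$ is primitive with orthogonal complement $M_F$ inside an even unimodular lattice, Proposition \ref{Nik Disc form} yields
\begin{equation*}
 (A(M_F),\,q(M_F)) \;\cong\; (A(\Omega_F),\,-q(\Omega_F)).
\end{equation*}
Combined with the signature computation, this determines the genus of $M_F$. It then remains, row by row, to verify that the lattice displayed in the middle column of the table (i) has the correct signature, (ii) has the correct discriminant form, and (iii) is the unique member of its genus. The signature is immediate, the discriminant form is a direct calculation from the Gram matrices of $U(n)$, $A_n(-1)$, $D_4(-2)$, $E_8(-2)$ and $\langle \pm k\rangle$, and uniqueness follows from Theorem \ref{Nik Genus}, since every candidate lattice in the table visibly splits off a hyperbolic summand $U$ or $U(n)$.

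The main obstacle is the first step for the non-abelian groups, where one cannot simply invoke Nikulin's original classification and must instead rely on the case-by-case analysis in \cite{Ha}. A secondary but genuine nuisance is the discriminant-form bookkeeping for the rows containing nondiagonal summands such as $A_2(-1)^{\oplus 2}$, $D_4(-2)$, and $E_8(-2)$, where the glue between summands and the base change from $U$ to $U(n)$ must be tracked carefully to match $-q(\Omega_F)$.
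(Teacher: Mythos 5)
The paper offers no proof of this proposition: it is quoted from the references \cite{Ni,Ha}, as the ``cf.''\ in its statement indicates. Your outline --- negative-definite coinvariant lattice from Nikulin/Hashimoto, transfer to $H^2(S,\Z)^F$ via Proposition \ref{Nik Disc form} inside the unimodular K3 lattice, then identification of each table entry by signature, discriminant form, and one-class-per-genus via Theorem \ref{Nik Genus} --- is exactly the standard derivation carried out in those references, so it is the right reconstruction and I see no gap.
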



The main result in this section is the following classification of the lattices $M_G$ and $N_G$. 

\begin{Thm} \label{G-inv lattice}
The lattices $M_G$ and $N_G$ for each $G$ are given by the following. 
\begin{center}
 \begin{tabular}{|c|c|c|c|} \hline 
 $G$  & $H$ & $M_G$ & $N_G$\\ \hline
 $C_{2}$ & $C_1$ & $U(2)\oplus E_{8}(-2)$  & $U\oplus U(2)\oplus E_8(-2)$  \\ \hline
 $C_2 \times C_2$  & $C_2$& $U(2)\oplus D_4(-2)$  &  $U(2)^{\oplus 2}\oplus D_4(-2)$   \\ \hline
  $C_2\times C_2 \times C_2$  & $C_2 \times C_2$ & $U(2)\oplus \langle -4 \rangle^{\oplus 2} $  & $U(2)^{\oplus2}\oplus \langle -4 \rangle^{\oplus2}$   \\ \hline
 $D_{6}$ & $C_3$ & $U(2)\oplus A_{2}(-2)$  & $U(3)\oplus U(6) \oplus A_2(-2)$ \\ \hline
$D_{8}$ & $C_4$ & $U(2)\oplus \langle -4\rangle$  &  $U(4)^{\oplus2}\oplus \langle -4\rangle$  \\ \hline
$D_{10}$ & $C_5$ & $U(2)$  &  $U(5)\oplus U(10)$   \\ \hline
$D_{12}$ & $C_6$ & $U(2)$  &  $U(6)^{\oplus2}$  \\ \hline
$C_{2}\times D_{8}$ & $C_2 \times C_4$ & $U(2)$  &  $U(4) \oplus \langle 4\rangle\oplus \langle -4\rangle $ \\  \hline
 \end{tabular}
 \end{center}
\end{Thm}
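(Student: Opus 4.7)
My plan is to realize $M_G$ and $N_G$ as the $\pm 1$-eigenlattices of $\iota$ acting on $L^H := H^2(S,\Z)^H$ and then pin down their isomorphism classes via Nikulin's uniqueness-of-genus theorem (Theorem~\ref{Nik Genus}).

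First I would carry out the structural reduction. Because $\iota h\iota = h^{-1}$, the involution $\iota$ preserves $L^H$ and its $+1$-eigenlattice inside $L^H$ is tautologically $L^G = M_G$. An averaging argument with the rational projector $P_H := \frac{1}{|H|}\sum_{h\in H} h^*$ will then identify the orthogonal complement of $M_G$ inside $L^H$ with $N_G$: for $x \in L^H$ perpendicular to $M_G$ and any $y \in L^\iota$, one has $\langle x,y\rangle = \langle x, P_H y\rangle = 0$, since $x \in L^H$ forces the first equality and $P_H y \in L^G \otimes \Q$ (using $\iota h\iota = h^{-1}$) forces the second. This shows $M_G \oplus N_G \subset L^H$ with finite index, and that both embed primitively into $L$ with $M_G \subset L^\iota \cong U(2)\oplus E_8(-2)$ and $N_G \subset L_\iota \cong U \oplus U(2) \oplus E_8(-2)$ by Proposition~\ref{PROP_enriques_involution}. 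The ranks then follow from the Lefschetz trace formula $\Tr(g^*\vert H^2(S,\Q)) = e(S^g)-2$, which equals $-2$ for every Enriques involution in $G\setminus H$ and takes the standard Nikulin values for symplectic $h \in H$; averaging over $G$ and $H$ yields the claimed ranks in combination with Proposition~\ref{symp-inv lattice}. The signatures $(1,\rank M_G-1)$ and $(2,\rank N_G-2)$ are immediate from the existence of a $G$-invariant K\"ahler class in $M_G$ (obtained by averaging) and of $\omega_S, \bar\omega_S \in N_G\otimes\C$.

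The key observation for the isomorphism class is that every entry of the claimed table contains a direct summand of the form $U(n)$, so Theorem~\ref{Nik Genus} reduces the problem to the determination of rank, signature, and discriminant form. Moreover, $N_G$ is the transcendental lattice of a generic K3 surface carrying a Calabi--Yau $G$-action (\cite{HK}), hence $M_G$ is its orthogonal complement inside the unimodular K3 lattice, and Proposition~\ref{Nik Disc form} gives $q(M_G) \cong -q(N_G)$; thus only one discriminant form per group needs to be pinned down. The case $G = C_2$ is then immediate from Proposition~\ref{PROP_enriques_involution}.

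The main obstacle will be the discriminant form computation for the remaining seven groups. I would approach it through the gluing short exact sequence $0 \to M_G \oplus N_G \to L^H \to \Gamma \to 0$, where $\Gamma$ is an isotropic subgroup of $A(M_G) \oplus A(N_G)$: the known $L^H$ of Proposition~\ref{symp-inv lattice}, together with the requirement that the induced $\iota$-action on the full K3 lattice realize the Enriques decomposition of Proposition~\ref{PROP_enriques_involution}, should tightly constrain $\Gamma$ and hence determine the discriminant forms. The most delicate cases are expected to be $G = D_8$, $D_{12}$, and $C_2 \times D_8$, where $L^H$ has small discriminant and the $\iota$-action on its definite part is most sensitive to the chosen Calabi--Yau $G$-action; for these I would read the $\iota$-action on $H$-invariant classes directly off the explicit Horikawa models of Section~\ref{SECT_construction}, such as Example~\ref{EX_d8c2}. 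Genus uniqueness via Theorem~\ref{Nik Genus} will then identify $M_G$ and $N_G$ with the lattices in the table.
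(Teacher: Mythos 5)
Your structural reduction---that $M_G$ and $N_G$ are the $\pm1$-eigenlattices of $\iota$ on $\Lambda^H:=H^2(S,\Z)^H$, the rank and signature counts, and the reduction to discriminant forms via Theorem \ref{Nik Genus}---is sound, but the proposal breaks down at the step that carries all the content: the computation of $q(N_G)$. The claim that ``$M_G$ is the orthogonal complement of $N_G$ inside the unimodular K3 lattice, so Proposition \ref{Nik Disc form} gives $q(M_G)\cong -q(N_G)$'' is false for every $G$ with $H\neq C_1$. The orthogonal complement of $N_G=T(S)$ in $H^2(S,\Z)$ is $NS(S)$, which is strictly larger than $M_G=NS(S)^G$ (the paper states this explicitly), and the table itself refutes the claim: for $G=D_{10}$ one has $|\disc(M_G)|=|\disc(U(2))|=4$ while $|\disc(N_G)|=|\disc(U(5)\oplus U(10))|=2500$. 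So you cannot transport the discriminant form from $M_G$ to $N_G$, and the seven nontrivial cases are left resting on the assertion that the glue group $\Gamma\subset A(M_G)\oplus A(N_G)$ ``should be tightly constrained.'' The constraints you list (the known $\Lambda^H$ and the Enriques decomposition of Proposition \ref{PROP_enriques_involution}) do not by themselves determine $\Gamma$, and no mechanism is given for extracting the $\iota$-action on $A(\Lambda^H)$ from the Horikawa models.

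The paper closes exactly this gap by a different device: it twists $\iota$ by the covering involution $\theta$ of the Horikawa double cover $S\to\PP^1\times\PP^1$, so that $G'=\langle H,\iota\theta\rangle$ acts symplectically and $N_G=\Lambda^{G'}_\iota=L^\perp_{\Lambda^{G'}}=(\Lambda_{G'}\oplus L)^\perp_\Lambda$, where $L\cong U(2)$ is the pullback of $H^2(\PP^1\times\PP^1,\Z)$ and $\Lambda^{G'}$ is already known from Proposition \ref{symp-inv lattice}. The remaining subtlety, which your outline does not anticipate, is that $\Lambda_{G'}\oplus L$ is in general not primitive in $\Lambda$: for $H=C_2,C_4,C_6$ the branch curve meets the ruling $z=0$ in a reducible divisor whose components give half-integral classes of the form $(v+e)/2$, generating an index-two overlattice $K_1$, and one then needs the uniqueness of Calabi--Yau $G$-actions from \cite{HK} to identify $K_1$ with the full primitive closure $(N_G)^\perp_\Lambda$ before computing $q(N_G)$ and invoking Theorem \ref{Nik Genus}. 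Without an input of this kind your gluing group is simply not determined, so as written the proof does not go through.
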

\begin{Rem} \label{REM_Q_isom}
From the table in Theorem \ref{G-inv lattice}, one can check that there exists an isomorphism $N_G\otimes\Q\cong (U\oplus M_G) \otimes \Q$ as quadratic spaces over $\Q$.
\end{Rem}
\begin{proof}[Proof of Theorem \ref{G-inv lattice}]
It suffices to determine $N_G$ because the classification of $M_G$ was completed in \cite[Proposition 4.4]{HK}. 
We set $\Lambda:=H^{2}(S,\Z)$ and employ the lattice theory reviewed in Section 2.
In \cite[Proposition 3.11]{HK}, we gave a projective model of a generic K3 surface $S$ with a Calabi--Yau $G$-action as the double covering $S \rightarrow \PP^1\times\PP^1$ branching along a smooth curve $C$ of bidegree $(4,4)$ (see Section \ref{SECT_construction}).
Let $\theta$ denote the covering transformation of $S\rightarrow \PP^1\times \PP^1$. 
We define $G'$ to be the group generated by $H$ and $\iota\theta$.
Then $G'$ is isomorphic to $G$ as an abstract group, the action of $G'$ on $S$ is symplectic, and we have $N_G=\Lambda^{G'}_\iota$. 
Let $L\subset \Lambda$ denote the pullback of $H^2(\PP^1\times \PP^1,\Z)$, which is isomorphic to $U(2)$.
Let $e,f$ denote the standard basis of $L$, that is, $e^2=f^2=0$ and $\langle e,f \rangle=2$.
For our projective model, $G$ acts on $L$ trivially.
Since $S/\langle \theta \rangle \cong \PP^1 \times \PP^1$,
 we have $L=\Lambda^\theta=\Lambda^{\langle G,\theta \rangle}$.
Hence
\begin{equation*}
 N_G=\Lambda^{G'}_\iota
 =L^\bot_{\Lambda^{G'}}=(\Lambda_{G'}\oplus L)^\bot_\Lambda.
\end{equation*}
Recall that $\Lambda^{G'}$ is given in Proposition \ref{symp-inv lattice}. 

 \underline{Case $H= C_1$.} By Proposition \ref{PROP_enriques_involution}, we have $N_G=U\oplus U(2)\oplus E_8(-2)$.
 
 
  \underline{Case $H= C_2$.}
Set $\Gamma:= U(2)^{\oplus2}\oplus D_4(-2)$.
We have $\Lambda^{G'}\cong U\oplus \Gamma$.
For our projective model, the branching curve $C$ is defined by a (generic) linear combination of polynomials of the form
$$
 x^i y^{4-i} z^j w^{4-j} + x^{4-i} y^i z^{4-j} w^j, \quad
 0\leq i,j \leq 4, \ i \equiv 0 \ \bmod 2,
$$
 where $x,y$ (resp.\ $z,w$) are homogeneous coordinates of the first (resp.\ second) $\PP^1$.
Note that the equation $z=0$ defines a reducible curve on $S$, which has the two components $D_1,D_2$ such that $D_i^2=-2$ and $\langle D_1,D_2 \rangle=2$.
Hence we may assume that the class of $D_1$ or $-D_1$ is given as  $u:=(v+e)/2 \in \Lambda$ for some $v\in \Lambda_{G'}$ by interchanging $e$ and $f$ if necessary.
We then have the following inclusions:
$$
 K_0:= \Lambda_{G'}\oplus L \subset K_1:=K_0 + \Z u
 \subset (N_G)_\Lambda^\bot.
$$
We show $K_1=(N_G)_\Lambda^\bot$.
Let $\overline{u}$ denote the class of $u$ in $A(K_0)$.
We have
\begin{equation*} \label{EQ_k0_k1}
 A(K_1)=\overline{u}^\bot/\langle \overline{u} \rangle
 \cong B \cap \overline{u}^\bot, \quad
 B:= \left( (\Lambda_{G'})^\vee \oplus \Z (f/2) \right)/K_0\subset A(K_0).
\end{equation*}
Hence the projection $B \rightarrow A(\Lambda_{G'})$ induces the isomorphism $q(K_1) \cong q(\Lambda_{G'})$.
Hence
\begin{equation} \label{EQ_q_gamma}
 q(K_1) \cong q(\Lambda_{G'}) \cong -q(\Lambda^{G'}) \cong -q(\Gamma)
\end{equation}
 by Proposition \ref{Nik Disc form}.
Again, by Proposition \ref{Nik Disc form}, there exists a primitive embedding $\varphi\colon K_1 \rightarrow \Lambda$ 
such that $(\varphi(K_1))_\Lambda^\bot \cong \Gamma$.
By Proposition \ref{symp-inv lattice}, we have $(\Lambda_{G'})^\iota = \Lambda_{\iota \theta} \cong E_8(-2)$.
Thus
\begin{equation*}
 (K_0)^\iota \cong E_8(-2) \oplus U(2) \subset (K_1)^\iota
 \subset ((N_G)_\Lambda^\bot)^\iota \cong E_8(-2) \oplus U(2)
\end{equation*}
by Proposition \ref{PROP_enriques_involution}.
Hence $(K_1)^\iota \cong E_8(-2) \oplus U(2)$.
Since $H$ acts on $A(K_1)$ trivially and $A((K_1)^\iota)$ is a $2$-elementary group, the action of $G$ on $K'_1:=\varphi(K_1)$ (via $\varphi$) extends to that on $\Lambda$ in such a way that $H$ (resp.\ $\iota$) acts on $(K'_1)_\Lambda^\bot$ trivially (resp.\ as negation).
This action of $G$ on $\Lambda$ gives a Calabi--Yau $G$-action on a K3 surface (see \cite[Section 3.3]{HK} for details).
By the uniqueness of a Calabi--Yau $G$-action \cite[Theorem 3.19]{HK}, we have $K_1=(N_G)_\Lambda^\bot$.
Hence $q(N_G) \cong -q(K_1) \cong q(\Lambda^{G'}) \cong q(\Gamma)$ by (\ref{EQ_q_gamma}) and Proposition \ref{Nik Disc form}.
Therefore $N_G \cong \Gamma$ by Theorem \ref{Nik Genus}.

\underline{Case $H= C_2 \times C_2$.}
%
Since $\Lambda^{G'} \cong U(2)^{\oplus 3}\oplus \langle -4\rangle^{\oplus 2}$, we have $N_G = L^\bot_{\Lambda^{G'}} \cong U(2)^{\oplus 2}\oplus \langle -4\rangle^{\oplus 2}$ by Theorem \ref{Nik Genus}.

\underline{Case $H= C_3,C_5$.}
By a similar argument to Case $H = C_2$, we conclude that $q(N_G) \cong q(\Lambda^{G'})\oplus q(L)$.
The lattice $N_G$ is uniquely determined by Theorem \ref{Nik Genus}.
%
%

\underline{Case $H= C_4,C_6$.}
For our projective model, the branching curve $C$ is defined by a (generic) linear combination of the following polynomials:
\begin{gather*}
 x^4 z^3 w+y^4 z w^3,x^4 z w^3+y^4 z^3 w,x^2 y^2 z^4+x^2 y^2 w^4,  x^2 y^2 z^2 w^2
 \quad \text{if~} H=C_4, \\
 x^4 z^4+y^4 w^4,x^4 z w^3+y^4 z^3 w,x^2 y^2 z^2 w^2
 \quad \text{if~} H=C_6.
\end{gather*}
Note that the curve on $S$ defined by $z=0$ is reducible in each case.
Similarly to Case $H = C_2$, we conclude that $(N_G)_\Lambda^\bot$ contains $\Lambda_{G'}\oplus L$ as a sublattice of index $2$ and that $q(N_G)\cong q(\Lambda^{G'})$.
The lattice $N_G$ is uniquely determined by Theorem \ref{Nik Genus}.
For Case $H=C_4$, we use the isomorphism $\langle 1 \rangle \oplus \langle -1 \rangle^{\oplus 2} \cong U\oplus \langle -1 \rangle$.
%
%
%


%

\underline{Case $H= C_2 \times C_4$.}
Since $\Lambda^{G'} \cong U(2) \oplus \langle 4\rangle^{\oplus 2} \oplus \langle -4\rangle^{\oplus 2}$, we have $N_G = L^\bot_{\Lambda^{G'}} \cong \langle 4\rangle^{\oplus2}\oplus \langle -4\rangle^{\oplus2} \cong U(4) \oplus \langle 4\rangle\oplus \langle -4\rangle$ by Theorem \ref{Nik Genus}.
\end{proof}


\subsection{Brauer Groups $\mathrm{Br}(X)$}
Throughout this section, $X$ denotes a Calabi--Yau threefold of type K and $\pi \colon S\times E \rightarrow X$ its minimal splitting covering with Galois group $G=H\rtimes \langle \iota \rangle$. 

\begin{Def}
The Brauer group $\mathrm{Br}(Y)$ of a smooth projective variety $Y$ is defined by $\mathrm{Br}(Y):=\mathrm{Tor}(H^2(Y,\mathcal{O}_Y^{\times}))$. 
Here $\mathrm{Tor}(-)$ denotes the torsion part.
\end{Def}
By the exact sequence $H^2(Y,\mathcal{O}_Y) \rightarrow H^2(Y,\mathcal{O}_Y^{\times}) \rightarrow H^3(Y,\Z) \rightarrow H^3(Y,\mathcal{O}_Y) $ 
and the universal coefficient theorem, for a Calabi--Yau threefold $Y$, we have 
$$
\mathrm{Br}(Y)\cong \mathrm{Tor}(H^3(Y,\Z))\cong \mathrm{Tor}(H_2(Y,\Z)) \cong \mathrm{Tor}(H^4(Y,\Z)). 
$$
Therefore the Brauer group of a Calabi--Yau threefold is topological, in contrast to that of a K3 surface, which is analytic.

%

The projection $E \times S \rightarrow E$ descends to the locally isotrivial K3-fibration $f\colon X \rightarrow B:=E/G \cong \PP^1$ with $4$ singular fibers.
Each singular fiber $f^{-1}(p_i)$ ($p_i\in B$, $1 \le i \le4$) is a (doubled) Enriques surface.
Let $U_i$ be a neighborhood of $p_i$ isomorphic to a disk such that $U_i \cap U_j=\emptyset$ for $i \ne j$.
We use the following notations: $U_i^*=U_i\setminus \{ p_i \}$, $\overline{U_i}=f^{-1}(U_i)$,
$\overline{U_i^*}=f^{-1}(U_i^*)$, $B^*=B\setminus \{p_i\}_{i=1}^4$ and $\overline{B^*}=f^{-1}(B^*)$. 
Since $S/\langle \iota \rangle \hookrightarrow \overline{U_i}$ is a deformation retraction, we have\footnote{For the homology groups of Enriques surfaces, see e.g.\ \cite[Section 9.3]{DIK}.}
$$
 H_2(\overline{U_i},\Z)\cong H_2(S/\langle \iota \rangle,\Z) \cong \Z^{\oplus 10}\oplus \Z_2. 
$$
Moreover, by considering monodromies, we obtain\footnote{For the proof of the isomorphism $H_2(S,\Z)/  \{ x-\iota x \bigm| x\in H_2(S,\Z) \} \cong \Z^{\oplus 10}\oplus (\Z_2)^{\oplus 2}$, see Lemma \ref{LEM_inv_disc}.}
$$
H_2(\overline{U_i^*},\Z)\cong H_2(S,\Z)/  \{ x-\iota x \bigm| x\in H_2(S,\Z) \} \cong \Z^{\oplus 10}\oplus (\Z_2)^{\oplus 2}
$$
and 
\begin{equation*}
 H_2(\overline{B^*},\Z)\cong H_2(S,\Z)/ \langle x-g x \bigm| x\in H_2(S,\Z), ~ g\in G \rangle. 
\end{equation*}
The Mayer--Vietoris sequence reads
\begin{equation} \label{EQ_Mayer-Vietoris}
 \bigoplus_{i=1}^4 H_2(\overline{U_i^*},\Z)  \mathop{\longrightarrow}^{\alpha}
 H_2(\overline{B^*},\Z) \oplus \left(\bigoplus_{i=1}^4 H_2(\overline{U_i},\Z) \right)
 \mathop{\longrightarrow} H_2(X,\Z)  \longrightarrow \bigoplus_{i=1}^4 H_1(\overline{U_i^*},\Z).
\end{equation}
Since $H_1(\overline{U_i^*},\Z) \cong \Z$, 
 we conclude that $\mathrm{Br}(X) \cong \mathrm{Tor}(P)$, where
\begin{equation} \label{EQ_P_and_Q}
 P=\frac
  {H_2(\overline{B^*},\Z) \oplus \left(\bigoplus_{i=1}^4 H_2(\overline{U_i},\Z)\right)}
  {\alpha(\bigoplus_{i=1}^4 H_2(\overline{U_i^*},\Z))}
 \cong \frac{ H_2(\overline{B^*},\Z) }{Q}.
\end{equation}
Here $Q$ denotes the image in $H_2(\overline{B^*},\Z)$ of the kernel of the natural map
 $H_2(\overline{U_i^*},\Z) \rightarrow H_2(\overline{U_i},\Z)$,
 which is surjective \cite[Section 9.3]{DIK}.
Note that $Q$ does not depend on the index $i$ and, by Lemma \ref{LEM_Q_is_Z2} below, we have $Q\cong \Z_2$.
In what follows, we determine the torsion part of $H_2(\overline{B^*},\Z)$.
We identify $H_2(S,\Z)$ with $\Lambda:=H^2(S,\Z)$ by Poincar\'e duality.

\begin{Lem} \label{LEM_Q_is_Z2}
In the equation (\ref{EQ_P_and_Q}), we have $Q\cong \Z_2$.
\end{Lem}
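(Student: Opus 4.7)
The plan is to (i) identify the kernel $K := \ker(H_2(\overline{U_i^*},\Z) \to H_2(\overline{U_i},\Z))$ as $\Z_2$, and then (ii) show that the further map $K \to H_2(\overline{B^*},\Z)$ is injective.

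For (i), I would use the long exact homology sequence of the pair $(\overline{U_i}, \overline{U_i^*})$. In the local model $\overline{U_i} \simeq (S \times \Delta)/\langle \iota_i' \rangle$ with $\iota_i' := h_i\iota$ acting freely by $(x,w)\mapsto(\iota_i'\cdot x,-w)$, the singular fiber $T_i = S/\langle \iota_i'\rangle$ is a smoothly embedded Enriques surface in $\overline{U_i}$, and its normal bundle is the canonical bundle $K_{T_i}$, an oriented real rank-$2$ bundle. The Thom isomorphism therefore gives $H_n(\overline{U_i}, \overline{U_i^*}, \Z) \cong H_{n-2}(T_i, \Z)$, so in particular $H_3(\overline{U_i}, \overline{U_i^*}, \Z) \cong H_1(T_i, \Z) \cong \Z_2$. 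Combined with $H_3(\overline{U_i},\Z) \cong H_3(T_i,\Z) = 0$ (by Poincar\'e duality on $T_i$, giving $H_3(T_i) \cong H^1(T_i) = 0$), the long exact sequence yields $K \cong \Z_2$. Geometrically, the generator $\kappa$ of $K$ is represented by the 2-torus $\gamma \times S^1 \subset \overline{U_i^*}$, where $\gamma \subset T_i$ generates $\pi_1(T_i) = \Z_2$ and $S^1$ is a meridian circle of $T_i$ in the normal disk bundle.

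For (ii), via the Wang sequence for the mapping torus $\overline{U_i^*}$ (fiber $S$, monodromy $\iota_i'$), one identifies $H_2(\overline{U_i^*},\Z) \cong L/(1-\iota_i')L$, where $L := H_2(S,\Z)$, and its torsion subgroup is the Tate cohomology $L^{\iota_i'}/(1+\iota_i')L \cong (\Z_2)^{\oplus 2}$ (computed from Proposition \ref{PROP_enriques_involution}); the class $\kappa$ lies inside this torsion. The map $H_2(\overline{U_i^*},\Z) \to H_2(\overline{B^*},\Z) = L_G$ is the canonical further quotient by the relations $x \sim hx$ for $h \in H$.

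The main obstacle is verifying that $\kappa$ does not become trivial in $L_G$. I anticipate handling this by writing $\kappa$ explicitly as a lattice element using the Horikawa model of Section \ref{SECT_construction}, and then invoking Theorem \ref{G-inv lattice} to check case by case in $G$ that its image in $L_G$ remains nonzero; alternatively, a uniform argument can be given by lifting $\kappa$ to an explicit 2-cycle in the cover $S\times E$ and pairing it with the cohomology class representing the canonical 2-torsion of the Enriques fibers. The $i$-independence of $Q$ asserted in the lemma is consistent with this picture, since all four involutions $\iota_i'$ lie in the single coset $G\setminus H$ and therefore produce the same 2-torsion class in $L_G$.
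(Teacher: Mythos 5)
Your step (i) is fine, and is in fact a cleaner derivation than the paper gives of the (essentially formal) fact that $\ker\bigl(H_2(\overline{U_i^*},\Z)\to H_2(\overline{U_i},\Z)\bigr)\cong\Z_2$: the paper simply cites \cite[Section 9.3]{DIK} for surjectivity and reads off the kernel from $\Z^{\oplus 10}\oplus\Z_2^{\oplus 2}\twoheadrightarrow\Z^{\oplus 10}\oplus\Z_2$, whereas your Thom-isomorphism/long-exact-sequence argument, using that the normal bundle of the Enriques fiber $T_i$ is the (oriented) complex line bundle $K_{T_i}$, gives the same conclusion and a geometric generator. But this is not where the content of the lemma lies. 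The lemma asserts that the \emph{image} $Q$ of this $\Z_2$ in $H_2(\overline{B^*},\Z)\cong \Lambda/\langle x-gx\mid g\in G\rangle$ is still $\Z_2$, i.e.\ that the generator does not die under the much larger quotient by all of $G$ (not just by $\iota$). This is exactly the point you defer: your step (ii) says ``the main obstacle is verifying that $\kappa$ does not become trivial \dots I anticipate handling this by \dots'' and then offers two strategies without carrying either out. That is a genuine gap, not a routine verification: the torsion of $H_2(\overline{U_i^*},\Z)$ is $\Z_2^{\oplus 2}$ while the torsion of $H_2(\overline{B^*},\Z)$ is again some $\Z_2^{\oplus n}$, and there is no a priori reason the map between them is injective on the relevant class.

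For comparison, the paper's proof of precisely this step is a specific degeneration/divisibility argument: it realizes the generator as a vanishing cycle $\gamma$ with $\iota\gamma=-\gamma$ and $\gamma^2=-2$ (obtained by degenerating the Horikawa branch curve so that $S$ acquires an $A_1$ point), pushes $\gamma$ forward to the K3 surface $\widetilde S$ resolving $S/H$ (under which the relations $x-hx$, $h\in H$, die), and observes that $\{x-\iota x\mid x\in H_2(\widetilde S,\Z)\}\cong 2(H_2(\widetilde S,\Z)_\iota)^\vee\cong U(4)\oplus U(2)\oplus E_8(-2)$ consists of classes of square divisible by $4$, contradicting $\gamma^2=-2$. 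Neither of your proposed substitutes is strong enough as stated: pairing a torsion class in $H_2(\overline{B^*},\Z)$ against an integral cohomology class vanishes identically (you would need a torsion linking form or a mod-$2$ intersection argument, which you do not set up), and Theorem \ref{G-inv lattice} computes the lattices $M_G$ and $N_G$, which determine the abstract torsion of $\Lambda/\langle x-gx\rangle$ (via Lemmas \ref{LEM_coinv_gen}--\ref{LEM_inv_disc}) but not whether your particular class $\kappa$ maps to zero in it. To complete your proof you would need to supply an argument of the paper's type, or an honest mod-$2$ detection of $\kappa$ in the quotient.
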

\begin{proof}
As an example, we consider the case $H \cong C_2\times C_4$.
We can show the assertion for the other cases in a similar manner.
We use the same notation as in Example \ref{EX_d8c2}.
The branching curve $B$ is defined by
\begin{equation*}
 c_1 (x^{4}z^{4}+y^{4}w^{4}) + c_2 (x^{4}w^{4}+y^{4}z^{4})
 + c_3 x^{2}y^{2}z^{2}w^{2} = 0, \quad (c_1,c_2,c_3)\in \C^3.
\end{equation*}
We  consider the degeneration of $S$ to $S_0$,
 where $S_0$ is defined by generic $(c_1,c_2,c_3)$ such that
 $p:=(1:1)\times (1:1) \in B$, that is, $2c_1+2c_2+c_3=0$.
Note that $S_0$ has a singular point of type $A_1$ at the inverse image of $p$.
Let $\gamma$ denote the vanishing cycle corresponding to this singular point.
Then we have $\iota \gamma=-\gamma$ and $\gamma^2=-2$.
Let $\widetilde{S}$ denote the minimal desingularization of $S/H$, which is a K3 surface.
Since the induced action of $\iota$ on $\widetilde{S}$ is an Enriques involution, we have
\begin{equation*}
 \{ x - \iota x \bigm| x \in H_2(\widetilde{S},\Z) \}
 \cong 2 (H_2(\widetilde{S},\Z)_\iota)^\vee \cong U(4)\oplus U(2)\oplus E_8(-2)
\end{equation*}
 by Proposition \ref{PROP_enriques_involution} (cf.\ \cite[Section 4.4.4]{DIK}).
This implies that $(x-\iota x)^2 \equiv 0 \bmod 4$ for any $x\in H_2(\widetilde{S},\Z)$.
Assume that
\begin{equation*}
 \gamma \in \langle x-g x \bigm| x\in H_2(S,\Z), ~ g\in G \rangle=:D.
\end{equation*}
Then the pushforward of $\gamma$ in $H_2(\widetilde{S},\Z)$ is of the form $x-\iota x$ with $x\in H_2(\widetilde{S},\Z)$, which contradicts to $\gamma^2=-2$.
Threfore $\gamma \not\in D$ and $Q$ is generated by $\gamma \bmod D$.
\end{proof}

\begin{Lem} \label{LEM_coinv_gen}
We have $\Lambda_H= \langle x-g x \bigm| x\in \Lambda, ~ g\in H \rangle$. 
\end{Lem}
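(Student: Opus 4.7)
The containment $D := \langle x - gx \bigm| x \in \Lambda,\ g \in H \rangle \subseteq \Lambda_H$ is immediate: for $y \in \Lambda^H$ we compute $\langle y, x - gx \rangle = \langle y, x \rangle - \langle g^{-1}y, x \rangle = 0$. Both $D$ and $\Lambda_H$ agree with the sum of the non-trivial $H$-isotypic components of $\Lambda \otimes \Q$, so they have the same rank; in particular $\Lambda_H/D$ is finite. It is moreover annihilated by $|H|$, because for $y \in \Lambda_H$ the norm $\sum_{g \in H} gy$ lies in $\Lambda^H \cap \Lambda_H = 0$ and hence $|H|\cdot y = \sum_{g \in H}(y - gy) \in D$.

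For the reverse containment, my plan is to exploit the unimodularity of $\Lambda$ to give an explicit description of $D$. Decomposing $x \in \Lambda$ as $x = x^H + x_H$ in $(\Lambda^H\otimes\Q) \oplus (\Lambda_H\otimes\Q)$ yields $(1-g)x = (1-g)x_H$. By Proposition \ref{Nik Disc form}, the unimodular overlattice $\Lambda \supseteq \Lambda^H \oplus \Lambda_H$ corresponds to an isotropic subgroup of $A(\Lambda^H)\oplus A(\Lambda_H)$ which is the graph of an anti-isometry $\lambda\colon A(\Lambda^H) \to A(\Lambda_H)$; consequently the orthogonal projection $\Lambda \to \Lambda_H\otimes\Q$ surjects onto $\Lambda_H^\vee$. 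Since the glue subgroup is $H$-stable and $H$ acts trivially on $A(\Lambda^H)$, the map $\lambda$ is $H$-equivariant and $H$ acts trivially on $A(\Lambda_H)$, i.e.\ $(1-g)\Lambda_H^\vee \subseteq \Lambda_H$ for every $g \in H$. Combining these observations one obtains
\[
 D \;=\; \sum_{g \in H}(1-g)\Lambda_H^\vee \;\subseteq\; \Lambda_H.
\]

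The task then reduces to verifying $\sum_{g \in H}(1-g)\Lambda_H^\vee = \Lambda_H$ for each $H$ in Proposition \ref{symp-inv lattice}. For $H = C_2$ this is transparent: writing $\Lambda_H = E_8(-2)$ with the non-trivial $h \in H$ acting as $-1$, unimodularity of $E_8$ gives $\Lambda_H^\vee = \tfrac{1}{2}\Lambda_H$ as subsets of $\Lambda_H \otimes \Q$, so $(1-h)\Lambda_H^\vee = 2\Lambda_H^\vee = \Lambda_H$. The other cyclic $H$ are handled by parallel but lengthier computations, the key input being the explicit form of $\Lambda_H$ from Proposition \ref{symp-inv lattice} together with the eigenvalue decomposition of the generator. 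The main obstacle is the non-cyclic cases $H = C_2\times C_2$ and $H = C_2\times C_4$, where no single summand $(1-g)\Lambda_H^\vee$ exhausts $\Lambda_H$; one must combine contributions from several generators of $H$ and check on explicit coset representatives in the (in these cases $2$-elementary) discriminant group $A(\Lambda_H)$ that every element of $\Lambda_H$ is a $\Z$-linear combination of terms $(1-g)z$ with $g \in H$ and $z \in \Lambda_H^\vee$.
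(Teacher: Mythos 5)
Your reduction is sound and in fact mirrors the paper's own strategy: both use unimodularity of $\Lambda$ to write $x = x' + x''$ with $x' \in (\Lambda_H)^\vee$, $x'' \in (\Lambda^H)^\vee$, so that $D = \sum_{g\in H}(1-g)(\Lambda_H)^\vee \subseteq \Lambda_H$, and both then verify equality case by case. Your treatment of $H=C_2$ is correct and is a clean repackaging of the paper's computation ($\det(1-h)\vert_{(\Lambda_H)^\vee} = 2^8 = \lvert\disc(E_8(-2))\rvert$). The side observations ($H$ acts trivially on $A(\Lambda_H)$ via the $H$-equivariance of the gluing anti-isometry; $\Lambda_H/D$ is killed by $\lvert H\rvert$) are also fine.

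The genuine gap is that the hardest cases are announced rather than proved. For the remaining cyclic groups your ``parallel but lengthier computations'' amount exactly to the paper's determinant argument: one computes $\det(1-h)$ on $(\Lambda_H)^\vee$ from the eigenvalue multiplicities of $h$ on $\Lambda_H$ and checks it equals $\lvert\disc(\Lambda_H)\rvert$ in every case ($3^6$ for $C_3$, $2^{10}$ for $C_4$, $5^4$ for $C_5$, $2^4\cdot 3^4$ for $C_6$); this is routine but must actually be done, since the equality $\det(1-h)=\lvert\disc(\Lambda_H)\rvert$ is a numerical coincidence specific to these lattices, not a formal consequence of your setup. More seriously, for $H = C_2\times C_2$ and $C_2\times C_4$ your plan to ``check on explicit coset representatives in $A(\Lambda_H)$'' is not carried out, and this is where the real content lies: no single $(1-g)(\Lambda_H)^\vee$ suffices, and the paper avoids any coset enumeration by a different device --- it observes that the cyclic case already gives $\Lambda_K, \Lambda_{C_2} \subseteq D$ for the cyclic subgroups $K$ and $C_2$ of $H$, and then compares the glue groups $E = \Lambda_H/((\Lambda_H)_K \oplus (\Lambda_H)^K)$ and $F = \Lambda_{C_2}/((\Lambda_{C_2})_K \oplus (\Lambda_{C_2})^K)$ by a discriminant count, using that $\Lambda_{C_2}(1/2)\cong E_8(-1)$ is unimodular to force $\lvert E\rvert = \lvert F\rvert = 4$ and hence $E=F$. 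Without either that argument or an actual verification of your coset claim, the non-cyclic cases --- and with them the lemma --- remain unproved.
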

\begin{proof}
First assume that $H$ is a non-trivial cyclic group generated by $h$.
Since $\Lambda$ is unimodular, any element $x\in \Lambda$
 is written as $x'+x''$ with $x' \in (\Lambda_H)^\vee$ and
 $x'' \in (\Lambda^H)^\vee$ (Proposition \ref{Nik Disc form}).
Hence the map $\gamma$ defined by
$$
 \gamma:=\id-h \colon (\Lambda_H)^\vee \rightarrow
 L:=\langle x-g x \bigm| x\in \Lambda, ~ g\in H \rangle
$$
 is an isomorphism.
Note that we have the following inclusions: $L \subset \Lambda_H \subset (\Lambda_H)^\vee$. 
The eigenvalues of the action of $h$ on $\Lambda_H$
 are given as in the following table (see Proposition \ref{symp-inv lattice}).
$$
\begin{array}{|c|c|c|}
 \hline
 H & \text{Eigenvalues} & \det(\gamma) \\
 \hline
 C_2 & (-1)^8 & 2^8 \\ \hline
 C_3 & (\zeta_3)^6(\zeta_3^2)^6 & 3^6 \\ \hline
 C_4 & (-1)^6(\zeta_4)^4(-\zeta_4)^4 & 2^{10} \\ \hline
 C_5 & (\zeta_5)^4(\zeta_5^2)^4(\zeta_5^3)^4(\zeta_5^4)^4 & 5^4 \\ \hline
 C_6 & (-1)^4(\zeta_3)^4(\zeta_3^2)^4(-\zeta_3)^2(-\zeta_3^2)^2
  & 2^4 \cdot 3^4 \\
 \hline
\end{array}
$$
In each case, we have $\det(\gamma)=|\disc(\Lambda_H)|$.
Hence
\begin{equation*}
 |(\Lambda_H)^\vee/L|=\det(\gamma)
 =|\disc(\Lambda_H)|=|(\Lambda_H)^\vee/\Lambda_H|,
\end{equation*}
 which implies $L=\Lambda_H$.

Next we consider the case $H=C_2\times K$ with $K=C_2$ or $C_4$.
We define $E$ and $F$ by
$$
 E:=\Lambda_H/\left( (\Lambda_H)_{K} \oplus (\Lambda_H)^{K} \right), \quad
 F:=\Lambda_{C_2}/\left( (\Lambda_{C_2})_{K} \oplus (\Lambda_{C_2})^{K} \right).
$$
We have
$$
 (\Lambda_H)_K=\Lambda_K, \quad
 \Lambda_K \cap \Lambda_{C_2}=(\Lambda_{C_2})_K, \quad
 M:=(\Lambda_H)^{K}=(\Lambda_{C_2})^{K}.
$$
Hence $F$ is considered as a subgroup of $E$.
Note that $\Lambda_K$ and $\Lambda_{C_2}$ are contained in $L$
 by the argument above.
Therefore, in order to show $\Lambda_H=L$,
 it is enough to show $E=F$.
We have
$$
 |E|^2 \cdot |\disc(\Lambda_H)|=|\disc(\Lambda_K)| \cdot |\disc(M)|.
$$
Since $\Lambda_{C_2}(1/2)$ is isomorphic to the unimodular lattice $E_8(-1)$ by Proposition \ref{symp-inv lattice}, we obtain
$$
 |F|=|\disc(M(1/2))|=|\disc(M)| \cdot 2^{-r}, \quad
 r=\rank M=\rank \Lambda_H-\rank \Lambda_K,
$$
 by Propositions \ref{Nik Disc form}.
We have the following table (see Proposition \ref{symp-inv lattice}).
$$
\begin{array}{|c|c|c|c|c|c|}
 \hline
 K & \disc(\Lambda_H) & \disc(\Lambda_K)
  & \rank \Lambda_H & \rank \Lambda_K & r \\ \hline
 C_2 & 2^{10} & 2^8 & 8 & 12 & 4 \\ \hline
 C_4 & 2^{10} & 2^{10} & 16 & 14 & 2 \\
 \hline
\end{array}
$$
Hence, in each case, we have $|E|^2/|F| =2^r \cdot \disc(\Lambda_K)/\disc(\Lambda_H)=2^2$. 
Note that $M(1/2)$ is not unimodular, that is, $|F|>1$,
 because the rank of any definite even unimodular lattice is divisible by $8$ (see Section \ref{SECT_lattice}).
Since $|E|/|F|=|E/F|$ is an integer, we have $|E|=|F|=2^2$,
 which implies $E=F$.
\end{proof}

\begin{Lem} \label{LEM_torsion_B_star}
The torsion part of
\begin{equation} \label{EQ_B_star}
H_2(\overline{B^*},\Z)
\cong \Lambda / \langle x-g x \bigm| x\in \Lambda, ~ g\in G \rangle 
\cong (\Lambda^H)^\vee / \{ x-\iota x \bigm| x\in (\Lambda^H)^\vee \}
\end{equation}
is isomorphic to $\Z_2^{\oplus n}$. 
Here $n$ is given by $n=\rank \Lambda^H_\iota - a$ for $\Lambda^H/( \Lambda^G \oplus \Lambda^H_\iota )\cong\Z_2^{\oplus a}$. 
\end{Lem}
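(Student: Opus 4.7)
The plan is first to verify the two isomorphisms in (\ref{EQ_B_star}), and then to isolate the torsion of the resulting quotient via the $\iota$-eigenspace decomposition of $(\Lambda^H)^\vee$. For the isomorphisms, the first has already been established by the monodromy computation. For the second, I would invoke Lemma \ref{LEM_coinv_gen} to rewrite the denominator as $\Lambda_H + (1-\iota)\Lambda$. Since $\Lambda$ is unimodular and $\Lambda^H$ is primitive with orthogonal complement $\Lambda_H$, the intersection pairing identifies $\Lambda/\Lambda_H$ with $(\Lambda^H)^\vee$, and the image of $(1-\iota)\Lambda$ under this identification is precisely $\{y-\iota y : y \in (\Lambda^H)^\vee\}$.

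Now set $M = \Lambda^H$, $P = M^\iota = \Lambda^G$, $Q = M_\iota = \Lambda^H_\iota$, $\sigma = 1 - \iota$, and let $\widetilde{P} = (M^\vee)^\iota$, $\widetilde{Q}^- = (M^\vee)^{-\iota}$. The first key step would be to prove
\[
 \mathrm{Tors}(M^\vee/\sigma M^\vee) \;=\; \widetilde{Q}^-/\sigma M^\vee.
\]
Inverting $2$ splits $M^\vee$ into $\iota$-eigenspaces and shows the torsion of the cokernel is $2$-primary. Using the identity $2y = (1+\iota)y + \sigma y$, a torsion class $\bar y$ satisfies $(1+\iota)y \in \sigma M^\vee \cap \widetilde{P} = 0$ (the intersection vanishes because $\widetilde{P}$ and $\sigma M^\vee$ lie in complementary $\iota$-eigenspaces), forcing $y \in \widetilde{Q}^-$; the reverse inclusion is immediate.

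The second and (I expect) main obstacle is to identify $\sigma M^\vee$ explicitly as $2 Q^\vee$. This reduces to showing that the projection $p_Q \colon M^\vee \to Q\otimes\Q$ surjects onto $Q^\vee$. The sublattice $Q$ is primitive in $\Lambda$ — being primitive in $\Lambda^H$ as an $\iota$-eigenlattice, with $\Lambda^H$ primitive in $\Lambda$ — so the standard theory of primitive sublattices of a unimodular lattice gives a surjection $\Lambda \twoheadrightarrow Q^\vee$ via the pairing; this factors through $\Lambda/\Lambda_H = M^\vee$ and coincides with $p_Q$. Hence $\sigma M^\vee = 2 p_Q(M^\vee) = 2 Q^\vee$.

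Finally, a duality computation gives $\widetilde{Q}^- = (M/P)^\vee$, and since $M/P$ is torsion-free and contains $Q$ with index $[M : P\oplus Q] = 2^a$, dualising yields $[Q^\vee : \widetilde{Q}^-] = 2^a$. Therefore
\[
 |\widetilde{Q}^-/2Q^\vee| \;=\; \frac{[Q^\vee : 2Q^\vee]}{[Q^\vee : \widetilde{Q}^-]} \;=\; \frac{2^{\rk Q}}{2^a} \;=\; 2^{\rk Q - a}.
\]
Since $2\widetilde{Q}^- \subset 2Q^\vee$, this quotient is $2$-elementary, so it is isomorphic to $\Z_2^{\oplus(\rk Q - a)} = \Z_2^{\oplus n}$ with $n = \rk \Lambda^H_\iota - a$, as claimed.
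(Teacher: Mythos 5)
Your proposal is correct, and it reaches the conclusion by a genuinely different route from the paper's at the key step. The paper factors the argument through a standalone module-theoretic statement (Lemma \ref{LEM_inv_disc}): for any non-degenerate lattice $\Gamma$ with a non-trivial involution, it invokes the classification of $\Z[C_2]$-lattices from Curtis--Reiner to write $\Gamma^\vee \cong V_+\oplus V_-\oplus W^{\oplus b}$ ($W$ the regular representation), reads off the torsion of $\Gamma^\vee/(1-\iota)\Gamma^\vee$ as $V_-/2V_-$, and identifies $b=a$ via a non-canonical isomorphism $\Gamma\cong\Gamma^\vee$ of $\Z[C_2]$-modules. You instead keep the bilinear form in play throughout: the identification $(1-\iota)M^\vee = 2Q^\vee$ rests on the primitivity of $Q=\Lambda^H_\iota$ inside the unimodular K3 lattice (so that $\Lambda\twoheadrightarrow Q^\vee$ factors through $M^\vee$), and the index $[Q^\vee:\widetilde{Q}^-]=2^a$ comes from dualising $Q\subset M/P$. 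The trade-off: the paper's lemma is purely representation-theoretic and applies to any non-degenerate lattice with involution, with no ambient unimodular lattice needed, whereas your argument avoids the integral representation theory of $C_2$ entirely at the cost of using unimodularity of $\Lambda$ — which is harmless here and consistent with the tools (Proposition \ref{Nik Disc form}) the paper already relies on. Two small points to tidy: your eigenspace argument for $\mathrm{Tor}(M^\vee/\sigma M^\vee)\subset\widetilde{Q}^-/\sigma M^\vee$ is phrased only for classes killed by $2$, but the same identity applied to $2^k y=\sigma z$ gives $2^k(1+\iota)y=0$ and hence $(1+\iota)y=0$ by torsion-freeness of $M^\vee$, so all $2$-power torsion is covered; and you should note explicitly that $M_\iota$ (the orthogonal complement of $M^\iota$ in $M=\Lambda^H$) coincides with $\Lambda^H\cap\Lambda_\iota$, so that your $Q$ and your index $2^a$ agree with the lattice $\Lambda^H_\iota$ and the integer $a$ in the statement.
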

\begin{proof}
The projection $\Lambda\rightarrow (\Lambda^H)^\vee$ induces an isomorphism $\varphi \colon \Lambda / \Lambda_H \rightarrow (\Lambda^H)^\vee$ by Proposition \ref{Nik Disc form}.
Note that the action of $\iota$ preserves $\Lambda^H$.
By Lemma \ref{LEM_coinv_gen}, the second isomorphism in (\ref{EQ_B_star}) is derived from $\varphi$.
The assertion of the lemma follows from Lemma \ref{LEM_inv_disc} below.
\end{proof}

\begin{Lem} \label{LEM_inv_disc}
Let $\Gamma$ be a non-degenerate lattice.
If an involution $\iota$ acts on $\Gamma$ non-trivially, then
\begin{equation*}
 \mathrm{Tor}( \Gamma^\vee / \{ x - \iota x \bigm| x\in \Gamma^\vee \} )
 \cong \Z_2^{\oplus n}, \quad n=\rank \Gamma_\iota - a.
\end{equation*}
Here $a \in \N$ is determined by
 $\Gamma/( \Gamma_\iota \oplus \Gamma^\iota ) \cong \Z_2^{\oplus a}$.
\end{Lem}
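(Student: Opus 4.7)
The approach is to use the rational eigenspace decomposition of $\iota$. Extend $\iota$ to $V:=\Gamma\otimes\Q$ and write $V=V_+\oplus V_-$ for its $\pm 1$-eigenspaces, with projections $p_\pm\colon V\to V_\pm$. Set $M:=\Gamma^\iota=\Gamma\cap V_+$ and $N:=\Gamma_\iota=\Gamma\cap V_-$, so by hypothesis $\Gamma/(M\oplus N)\cong \Z_2^{\oplus a}$. Since $x-\iota x=2p_-(x)$ for $x\in\Gamma^\vee$, the subgroup $L:=\{x-\iota x\mid x\in\Gamma^\vee\}$ equals $2\cdot p_-(\Gamma^\vee)$.

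First I would establish $p_-(\Gamma^\vee)=N^\vee$, using the general identity that orthogonal projection and intersection are swapped under duality: $p_-(\Gamma^\vee)^\vee=\Gamma\cap V_-=N$. Hence $L=2N^\vee$.

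Next I would isolate the torsion of $\Gamma^\vee/L$. Because $L\subset V_-$ and $V_-$ is a $\Q$-subspace, a coset $v+L$ is torsion iff $v\in V_-$, so $\mathrm{Tor}(\Gamma^\vee/L)=(\Gamma^\vee\cap V_-)/2N^\vee$. Dualising the inclusion $p_-(\Gamma)\subset V_-$ identifies $\Gamma^\vee\cap V_-$ with the dual lattice $p_-(\Gamma)^\vee$, so the task reduces to understanding the overlattice $N\subset p_-(\Gamma)$. The key observation is that the glue subgroup $\Gamma/(M\oplus N)\hookrightarrow \tfrac12 M/M\oplus\tfrac12 N/N$ projects injectively to each factor: if $\gamma\in\Gamma$ has $M$-component in $M$ itself, then $\gamma-p_+(\gamma)\in\Gamma$ is $\iota$-antiinvariant and hence lies in $N$. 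This forces $p_-(\Gamma)/N\cong \Z_2^{\oplus a}$.

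The final step is an index count: from the chain $2N^\vee\subset p_-(\Gamma)^\vee\subset N^\vee$ with $[N^\vee:p_-(\Gamma)^\vee]=2^a$ (by duality) and $[N^\vee:2N^\vee]=2^{\rank N}$, one reads off $[p_-(\Gamma)^\vee:2N^\vee]=2^{\rank\Gamma_\iota-a}$. Since $p_-(\Gamma)^\vee\subset N^\vee$ forces $2p_-(\Gamma)^\vee\subset 2N^\vee$, this quotient is $2$-elementary, giving $\mathrm{Tor}(\Gamma^\vee/L)\cong \Z_2^{\oplus n}$ with $n=\rank\Gamma_\iota-a$. The only step that is not pure bookkeeping is verifying the injectivity of the glue group on its $\tfrac12 N/N$-projection; once that is in hand, the rest is index arithmetic and standard duality.
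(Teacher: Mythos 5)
Your argument is correct, but it takes a genuinely different route from the paper's. The paper invokes the classification of integral representations of $C_2$ (Curtis--Reiner, \S 74): $\Gamma^\vee$ decomposes as $V_+\oplus V_-\oplus W^{\oplus b}$ with $W$ the rank-two regular representation, the torsion of $\Gamma^\vee/\{x-\iota x\}$ is then read off summand by summand as $V_-/2V_-$, and the identity $a=b$ is deduced from a (non-canonical) isomorphism $\Gamma\cong\Gamma^\vee$ of $\Z$-modules with $\iota$-action, which the paper leaves as ``one can check.'' You avoid the structure theorem entirely: you work with the rational eigenspace projections, identify $L=2p_-(\Gamma^\vee)=2N^\vee$ (valid since $N=\Gamma\cap V_-=\Gamma_\iota$ is primitive, so restriction of functionals is surjective), locate the torsion as $(\Gamma^\vee\cap V_-)/2N^\vee=p_-(\Gamma)^\vee/2N^\vee$, prove $[p_-(\Gamma):N]=2^a$ via the observation that the glue group $\Gamma/(M\oplus N)$ injects into each of $\tfrac12 M/M$ and $\tfrac12 N/N$, and finish with an index count together with the remark that the quotient is killed by $2$. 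All of these steps check out. Your route is more elementary and self-contained, and it makes the meaning of $a$ transparent as the order of the glue group; the paper's is shorter once the $\Z[C_2]$-lattice structure theorem is granted and makes the $2$-elementary nature of the answer visible from the outset.
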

\begin{proof}
For a free $\Z$-module $W=\Z w_1 \oplus \Z w_2$ of rank $2$,
 we define an action of $\iota$ on $W$ by
 $\iota(w_1)=w_2$ and $\iota(w_2)=w_1$.
As a $\Z$-module with an $\iota$-action,
 $\Gamma^\vee$ is decomposed into the direct sum
 $\Gamma^\vee\cong V_+ \oplus V_- \oplus W^{\oplus b}$,
 where $\iota$ acts as $\pm 1$ on $V_{\pm}$
 (see \cite[\S 74]{CR}).
One can check that there is a (non-canonical) isomorphism
 $\Gamma \cong \Gamma^\vee = \mathrm{Hom}_\Z(\Gamma,\Z)$ of $\Z$-modules with an $\iota$-action.
Hence $a=b$.
We have $\mathrm{Tor}( \Gamma^\vee / \{ x - \iota x \bigm| x\in \Gamma^\vee \} ) \cong V_-/2 V_-$. 
Since $\rank \Gamma_\iota=\rank V_- + a$, the assertion holds.
\end{proof}

\begin{Thm} \label{Thm: brauer_grp}
We have $\mathrm{Br}(X)\cong \Z_2^{\oplus m}$, where $m$ is given by the following.
\begin{equation*}
\begin{array}{|c|c|c|c|c|c|c|c|c|}
 \hline
 G & C_{2} & C_2 \times C_2 & C_2 \times C_2 \times C_2 & D_{6} & D_{8} & D_{10} & D_{12} & C_2 \times D_8 \\ \hline
 m & 1 & 2 & 3 & 1 & 2 & 1 & 2 & 3 \\ \hline
\end{array}
\end{equation*}
\end{Thm}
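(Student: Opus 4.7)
The plan is to combine the Mayer--Vietoris identification $\mathrm{Br}(X)\cong\mathrm{Tor}(P)$ from (\ref{EQ_P_and_Q}) with Lemmas \ref{LEM_Q_is_Z2} and \ref{LEM_torsion_B_star}, reducing the theorem to determining two integers for each $G$: $\rank N_G$ and the exponent $a$ appearing in Lemma \ref{LEM_torsion_B_star}. First I would check that $Q$ lies in the torsion subgroup of $H_2(\overline{B^*},\Z)$: the proof of Lemma \ref{LEM_Q_is_Z2} identifies the generator of $Q$ with the class $\bar{\gamma}$ of a vanishing cycle satisfying $\iota\gamma=-\gamma$, so $2\gamma=\gamma-\iota\gamma\in D$ and $\bar{\gamma}$ has order dividing $2$. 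Combined with $Q\cong\Z_2$ and Lemma \ref{LEM_torsion_B_star}, this exhibits $Q$ as a one-dimensional subspace of the $2$-elementary group $\mathrm{Tor}(H_2(\overline{B^*},\Z))\cong\Z_2^{\oplus n}$ with $n=\rank N_G-a$, hence as a direct summand; therefore $\mathrm{Br}(X)\cong\Z_2^{\oplus(n-1)}$ and $m=n-1$.

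Next I would compute $a$ case by case. The identity $2x=(x+\iota x)+(x-\iota x)\in M_G\oplus N_G$ for $x\in\Lambda^H$ confirms that $\Lambda^H/(M_G\oplus N_G)$ is indeed $2$-elementary, so $a$ is determined by the standard index formula
\begin{equation*}
 [\Lambda^H:M_G\oplus N_G]^2=\frac{|\disc M_G|\cdot|\disc N_G|}{|\disc \Lambda^H|}.
\end{equation*}
All three discriminants are available: $\Lambda^H$ from Proposition \ref{symp-inv lattice} and $M_G, N_G$ from Theorem \ref{G-inv lattice}. Substituting and then subtracting $a$ from $\rank N_G$ yields $n$, and hence $m=n-1$, for each of the eight groups. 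For instance, when $G=D_6$ one reads off $|\disc M_G|=48$, $|\disc N_G|=3888$ and $|\disc\Lambda^H|=729$, so that $[\Lambda^H:M_G\oplus N_G]^2=2^8$, $a=4$, $n=2$ and $m=1$; and the other seven cases are handled identically.

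I expect the only real subtlety to be the arithmetic bookkeeping for $G=D_6, D_{10}, D_{12}$, where $\Lambda^H$, $M_G$ and $N_G$ each carry an odd-prime contribution to their discriminants: one must verify that these odd-prime parts cancel exactly between the numerator and the denominator, as they are forced to, given that $\Lambda^H/(M_G\oplus N_G)$ is $2$-elementary. Once this routine check is performed across the table, the eight values $m=1,2,3,1,2,1,2,3$ drop out.
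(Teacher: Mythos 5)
Your proposal is correct and follows essentially the same route as the paper: reduce via the Mayer--Vietoris identification $\mathrm{Br}(X)\cong\mathrm{Tor}(P)$ and Lemmas \ref{LEM_Q_is_Z2} and \ref{LEM_torsion_B_star} to computing $n=\rank N_G-a$, with $a$ extracted from the discriminant index formula $|\disc(M_G)|\cdot|\disc(N_G)|/|\disc(\Lambda^H)|=2^{2a}$ using Proposition \ref{symp-inv lattice} and Theorem \ref{G-inv lattice}, and your sample computation for $D_6$ matches the paper's table. The only (welcome) addition is that you explicitly verify $Q$ is a $\Z_2$ direct summand of the $2$-elementary torsion subgroup, justifying $m=n-1$, a step the paper leaves implicit.
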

\begin{proof}
We use $n$ and $a$ as in Lemma \ref{LEM_torsion_B_star}:
 $n=\rank \Lambda^H_\iota - a$ and $ \Lambda^H/( \Lambda^G \oplus \Lambda^H_\iota )\cong\Z_2^{\oplus a}$. 
Then
\begin{equation*}
 |\disc(\Lambda^G)| \cdot |\disc(\Lambda^H_\iota)|/|\disc(\Lambda^H)|
 =2^{2a}.
\end{equation*}
Recall that we determined $M_G=\Lambda^G$ and $N_G=\Lambda^H_\iota$ in Theorem \ref{G-inv lattice}.
Combined with Proposition \ref{symp-inv lattice}, $n$ is computed as in the following table.
\begin{equation*} \label{EQ_list_n}
\begin{array}{|c|c|c|c|c|c|c|}
 \hline
 G & |\disc(\Lambda^H)| & |\disc(\Lambda^G)| & |\disc(\Lambda^H_\iota)|
  & a & \rank \Lambda^H_\iota & n \\
 \hline
 C_2                       &  1                & 2^{10}      & 2^{10}        & 10 & 12 & 2 \\ \hline
 C_2 \times C_2            &  2^8              & 2^8         & 2^{10}        &  5 &  8 & 3 \\ \hline
 C_2 \times C_2 \times C_2 & 2^{10}            & 2^6         & 2^8           &  2 &  6 & 4 \\ \hline
 D_6                       &  3^6              & 2^4 \cdot 3 & 2^4 \cdot 3^5 &  4 &  6 & 2 \\ \hline
 D_8                       &  2^{10}           & 2^4         & 2^{10}        &  2 &  5 & 3 \\ \hline
 D_{10}                    &  5^4              & 2^2         & 2^2 \cdot 5^4 &  2 &  4 & 2 \\ \hline
 D_{12}                    &  2^4 \cdot 3^4    & 2^2         & 2^4 \cdot 3^4 &  1 &  4 & 3 \\ \hline
 C_2 \times D_8            & 2^{10}            & 2^2         & 2^8           &  0 &  4 & 4 \\
 \hline
\end{array}
\end{equation*}
Since $\mathrm{Br}(X) \cong \mathrm{Tor}(P)$, where $P$ is defined in (\ref{EQ_P_and_Q}), we have $\mathrm{Br}(X)\cong \Z_2^{\oplus n-1}$ by Lemma \ref{LEM_torsion_B_star}.
\end{proof}

We observe that $H_1(X,\Z)\cong \mathrm{Br}(X)\oplus \Z_2^2$ holds for Calabi--Yau threefolds of type K by the result of \cite{HK}. 
It is interesting to investigate this isomorphism via self-mirror symmetry of a Calabi--Yau threefold of type K. 

\begin{Cor} \label{cor: derived equiv}
Let $X_1$ and $X_2$ be Calabi--Yau threefolds of type K whose Galois groups are $G_1$ and $G_2$ respectively. 
Then a derived equivalence $\mathrm{D^bCoh}(X_1)\cong \mathrm{D^bCoh}(X_2)$ implies an isomorphism $G_1\cong G_2$. 
\end{Cor}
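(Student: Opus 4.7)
The plan is to distinguish the eight deformation classes by combining two derived invariants: the Hodge numbers and the Brauer group. More precisely, I would argue that under a derived equivalence $\mathrm{D^bCoh}(X_1)\cong \mathrm{D^bCoh}(X_2)$, the ordered pair $(h^{1,1}(X_i),\mathrm{Br}(X_i))$ is preserved, and then check from the tables already in the paper that this pair uniquely determines the Galois group $G_i$.

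The first step is to invoke that Hodge numbers are derived invariants for smooth projective Calabi--Yau threefolds. This is standard: a Fourier--Mukai kernel induces an isomorphism of Hochschild homology, which via the HKR decomposition and the triviality of the canonical bundle recovers the Hodge diamond. In particular $h^{1,1}(X_1)=h^{1,1}(X_2)$, and by the first theorem of the introduction this value alone already separates $G$ into the groupings
\[
 \{C_2\},\ \{C_2\times C_2\},\ \{C_2^{\times 3},\,D_6\},\ \{D_8\},\ \{D_{10},\,D_{12},\,C_2\times D_8\}.
\]

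The second step is to invoke derived invariance of the Brauer group, which is the key input from \cite{Ad}: since $\mathrm{Br}(X)$ sits inside the torsion of integral cohomology and, for a Calabi--Yau threefold, is recovered from the derived category via Hochschild-type invariants, a derived equivalence induces an isomorphism $\mathrm{Br}(X_1)\cong \mathrm{Br}(X_2)$. Feeding the pairs $(h^{1,1},m)$ from the first theorem and Theorem \ref{Thm: brauer_grp}, one obtains the list
\[
 (11,1),\ (7,2),\ (5,3),\ (5,1),\ (4,2),\ (3,1),\ (3,2),\ (3,3),
\]
in bijection with the eight groups $C_2,\,C_2^{\times 2},\,C_2^{\times 3},\,D_6,\,D_8,\,D_{10},\,D_{12},\,C_2\times D_8$. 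Since all eight pairs are distinct, the invariants $(h^{1,1},\mathrm{Br})$ determine $G$, and the corollary follows.

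The only real subtlety is justifying that $\mathrm{Br}$ is a derived invariant in this dimension; for surfaces this is classical (Căldăraru), but for threefolds one must either cite \cite{Ad} directly or note that for our $X$ the Brauer group equals $\mathrm{Tor}\,H^3(X,\Z)$, which is extracted from the topological K-theory/Hochschild package preserved by Fourier--Mukai. Once that invariance is granted, the argument reduces to a purely combinatorial comparison of the two tables, and no further lattice-theoretic input is needed.
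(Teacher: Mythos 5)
Your overall strategy---pair a torsion-group invariant with the Hodge numbers and then check that the resulting list of pairs separates the eight Galois groups---is the same as the paper's, and your combinatorial verification that the pairs $(h^{1,1},m)$ are pairwise distinct is correct. The gap is in your second step: you invoke ``derived invariance of the Brauer group, which is the key input from \cite{Ad}.'' But the content (indeed the very title) of \cite{Ad} is that the Brauer group is \emph{not} a derived invariant of Calabi--Yau threefolds; what Addington actually proves, and what the paper uses, is that the combined finite abelian group $H_1(X,\Z)\oplus\mathrm{Br}(X)$ is a derived invariant. Your fallback justification---that $\mathrm{Br}(X)\cong\mathrm{Tor}(H^3(X,\Z))$ is ``extracted from the topological K-theory/Hochschild package preserved by Fourier--Mukai''---does not close the gap: torsion in integral cohomology is not seen by Hochschild homology, and Addington's counterexamples are precisely derived-equivalent Calabi--Yau threefolds with non-isomorphic Brauer groups. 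So the step on which your whole argument rests fails as stated.

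The repair is short and is essentially what the paper does. Replace $\mathrm{Br}(X)$ by the genuine derived invariant $H_1(X,\Z)\oplus\mathrm{Br}(X)$. For Calabi--Yau threefolds of type K one has $H_1(X,\Z)\cong\mathrm{Br}(X)\oplus\Z_2^{\oplus 2}$ (the observation recorded after Theorem \ref{Thm: brauer_grp}, with $H_1(X,\Z)$ computed in \cite{HK}), so $H_1(X,\Z)\oplus\mathrm{Br}(X)\cong\Z_2^{\oplus(2m+2)}$ and the derived invariant does recover $m$ after all; your table of pairs $(h^{1,1},m)$ then finishes the argument exactly as you intended. Note that you cannot shortcut this by using $H_1(X,\Z)$ alone either, since $H_1$ (equivalently $\pi_1^{\mathrm{ab}}$) is not known to be a derived invariant; it is only the direct sum with the Brauer group that Addington controls.
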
 
\begin{proof}
The work \cite{Ad} of Addington shows that the finite abelian group $H_1(X,\Z)\oplus \mathrm{Br}(X)$ is a derived invariant of a Calabi--Yau threefold. 
It is also known that the Hodge numbers are derived invariants. 
These invariants completely determine the Galois group of a Calabi--Yau threefold of type K. 
We refers the reader to \cite{HK} for the Hodge numbers and  $H_1(X,\Z)$. 
\end{proof}


\subsection{Some Topological Properties}

In this section, we study some topological properties of Calabi--Yau threefolds of type K. 
We also refer the reader to \cite[Section 5]{HK}. 

\begin{Def}[\cite{KW}] \label{DEF_type_L}
Let $M$ be a free abelian group of finite rank and $\mu\colon M^{\otimes 3}\rightarrow \Z$ a symmetric trilinear form on $M$.  
Let $L$ be a lattice with bilinear form $\langle*,** \rangle_L \colon L\times L\rightarrow \Z$. 
We call $\mu$ of type $L$ if the following hold:
\begin{enumerate}
\item There is a decomposition $M\cong L\oplus N$ as an abelian group such that $N\cong \Z$. 
\item We have $\mu (\alpha,\beta,n)=n \langle\alpha,\beta \rangle_L$ ($\alpha,\beta \in L$, \ $n\in N\cong \Z$) 
and the remaining values are given by extending the form symmetrically and linearly and by setting other non-trivial values to be 0.
\end{enumerate}
\end{Def} 

\begin{Prop}\label{Tri}
If $H$ is a cyclic group, the trilinear intersection form $\mu_X$ on $H^{2}(X,\Z)$ modulo torsion defined by the cup product is of type $L=M_G(1/2)$.
\end{Prop}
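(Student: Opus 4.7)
The plan is to compute $\mu_X$ by pulling back all cohomology classes via the \'etale Galois cover $\pi\colon S\times E \to X$ of degree $|G|$, using $\int_X \xi = \tfrac{1}{|G|}\int_{S\times E}\pi^*\xi$. Since $H^1(S,\Q)=0$, K\"unneth combined with taking $G$-invariants yields
\[
 H^2(X,\Q) \ \cong\ (M_G\otimes\Q)\oplus \Q\cdot p_E^*\omega_E,
\]
where $p_S,p_E$ are the two projections and $\omega_E\in H^2(E,\Z)$ is a generator; note that $G$ acts trivially on $H^2(E,\Q)\cong\Q$ because $\iota$ acts as $-1_E$ and $H$ by translations.

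Next I produce an integer class realising the transverse direction. The induced K3-fibration $f\colon X \to E/G \cong \PP^1$ has four singular fibres of the form $2T_i$, where each $T_i \cong S/\iota$ is an Enriques surface, corresponding to the four fixed points of $\iota$ on $E/H$. Take $\omega := [T_1]\in H^2(X,\Z)$; the preimage $\pi^{-1}(T_1) = S\times\{h(e_1):h\in H\}$ is a disjoint union of $|H|$ copies of $S\times\{\mathrm{pt}\}$, giving $\pi^*\omega = |H|\cdot p_E^*\omega_E$. Thus $\omega$ is half the full fibre class $F=2T_1$.

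Define $L\subset H^2(X,\Z)/\mathrm{tor}$ as the sublattice of classes $\alpha$ with $\pi^*\alpha\in p_S^*H^2(S,\Z)$, so that $\alpha\mapsto\tilde\alpha$ (determined by $\pi^*\alpha = p_S^*\tilde\alpha$) gives a natural injection $L\hookrightarrow M_G$. The trilinear form is then immediate from $\pi^*\omega = |H|p_E^*\omega_E$ and Fubini:
\[
 \mu_X(\alpha,\beta,\omega)\ =\ \tfrac{|H|}{|G|}\int_S\tilde\alpha\cup\tilde\beta\ =\ \tfrac{1}{2}\langle\tilde\alpha,\tilde\beta\rangle_S,\qquad \alpha,\beta\in L.
\]
Meanwhile $\mu_X(\alpha,\beta,\gamma)=0$ for $\alpha,\beta,\gamma\in L$ by dimension (pullback has total degree six on the four-dimensional $S$), and any product with two or more copies of $\omega$ vanishes because $(p_E^*\omega_E)^2=0$. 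This exactly matches the type-$M_G(1/2)$ shape, the crucial factor $\tfrac{1}{2}$ coming from $\pi^*\omega = \tfrac{|G|}{2}p_E^*\omega_E$.

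What remains is the integral verification: (i) $L\to M_G$ is surjective, and (ii) $H^2(X,\Z)/\mathrm{tor} = L \oplus \Z\omega$ splits integrally. For (i), each $G$-invariant divisor $D$ on $S$ gives an $X$-divisor $(D\times E)/G$ whose class pulls back to $p_S^*[D]$, so combining the Horikawa $(4,4)$-models of \cite[Section 3]{HK} with Theorem \ref{G-inv lattice} furnishes integer lifts of generators of $M_G$. Primitivity of $\omega$, needed for (ii), follows from $T_1$ being a reduced irreducible Enriques divisor. The cyclicity of $H$ enters precisely here, keeping the Cartan--Leray spectral sequence $H^p(G,H^q(S\times E,\Z))\Rightarrow H^{p+q}(X,\Z)$ well-behaved enough that the splitting holds cleanly. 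I expect this integral step---especially the surjectivity of $L\to M_G$---to be the main technical obstacle; the trilinear form computation itself is straightforward.
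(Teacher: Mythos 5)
Your rational computation is correct and is essentially the paper's: your class $\omega=[T_1]$ is the paper's $\tfrac12\alpha$ with $\alpha=(\pi_*[S\times\{0_E\}])^{\mathrm{PD}}$, the identity $\pi^*\omega=|H|\,p_E^*\omega_E$ produces the factor $\tfrac{|H|}{|G|}=\tfrac12$, and the remaining couplings vanish as you say. You are also right that the entire content of the proposition is your integral steps (i) and (ii); the problem is that your proposed arguments for them do not work. For (i), $M_G=H^2(S,\Z)^G$ has rank up to $10$ and contains non-effective classes of negative square; to form the cycle $(D\times E)/G$ you need an honest $G$-stable divisor, not merely a $G$-invariant class (for a divisor $D$ with merely invariant class, $\tfrac{1}{|G|}\pi_*(D\times E)$ need not be integral), and even then you would have to prove that such cycles generate all of $M_G$ over $\Z$ --- nothing in the Horikawa model hands you that. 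For (ii), ``reduced and irreducible'' does not imply ``primitive'' (a smooth conic in $\PP^2$ is reduced and irreducible with class $2H$), and even primitivity of $\omega$ would not give the splitting: since pulling back only forces $|H|/k\in\Z$, you must exclude classes of the form $\tfrac1k(\omega+\gamma)$ with $1<k\mid|H|$ and $\gamma\in L$. The closing appeal to the Cartan--Leray spectral sequence is not an argument, and it misplaces where cyclicity of $H$ actually enters.

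The paper closes both gaps simultaneously by computing the \emph{dual} lattice $H^4(X,\Z)_f\cong H_2(X,\Z)_f$ and invoking unimodularity of the Poincar\'e pairing $H^2(X,\Z)_f\times H^4(X,\Z)_f\to\Z$. Concretely, the Mayer--Vietoris sequence (\ref{EQ_Mayer-Vietoris}) for the K3-fibration $f\colon X\to\PP^1$ with its four Enriques fibers shows that $H_2(\overline{B^*},\Z)_f\cong\Lambda/\Lambda_G\cong(M_G)^\vee$ embeds primitively into $H_2(X,\Z)_f$, giving $H^4(X,\Z)_f\cong(M_G)^\vee\oplus\Z\tfrac{1}{|H|}\beta$ with $\beta$ dual to the elliptic curve $(\{\mathrm{pt}_S\}\times E)/G$; duality with this lattice then forces $H^2(X,\Z)_f\cong \tfrac{1}{|G|}M_G\oplus\Z\tfrac12\alpha$ in the paper's normalization, which is exactly your (i) and (ii). In particular $\tfrac12\alpha\cup\tfrac{1}{|H|}\beta=1$ instantly kills any class $\tfrac1k(\omega+\gamma)$. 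This is also where cyclicity of $H$ is genuinely used: for $H\cong C_2\times C_2$ or $C_2\times C_4$ the lattice $H^4(X,\Z)_f$ acquires an extra generator (Remark \ref{REM_non_cyclic}) and the statement must be adjusted. To repair your outline, replace the divisor-hunting in (i) and the primitivity claim in (ii) by this duality argument.
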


\begin{proof}
Let $0_E$ denote the origin of $E$ and pick a point $\mathrm{pt}_S$ in $S^H$, which is not empty \cite{Ni}. 
We define  
\begin{equation} \label{EQ_alpha_beta}
\alpha:=( \pi_*[S\times \{ 0_E \} ] )^{\mathrm{PD}} \in H^2(X,\Z), \ \ \ \beta:=(\pi_*[ \{ \mathrm{pt}_S \} \times E])^{\mathrm{PD}} \in H^4(X,\Z),
\end{equation}
 where the superscript $\mathrm{PD}$ means the Poincar\'e dual.
Since $\pi$ is $2$-to-$1$ on $S \times \{ 0_E \}$, it follows that $\alpha$ is divisible by $2$ in $H^2(X,\Z)$.
Similarly, $\beta$ is divisible by $|H|$.
Note that $\frac{1}{2}\alpha\cup \frac{1}{|H|}\beta=1$.
We have natural isomorphisms
\begin{equation*}
 H^2(X,\Q) \cong H^2(S \times E)^G \cong
 (M_G \otimes \Q) \otimes H^0(E,\Q) \oplus \Q \alpha
 \cong (M_G \otimes \Q) \oplus \Q \alpha
\end{equation*}
 and
\begin{equation*}
 H^4(X,\Q) \cong H^4(S \times E)^G \cong
 (M_G \otimes \Q) \otimes H^2(E,\Q) \oplus \Q \beta
 \cong (M_G \otimes \Q) \oplus \Q \beta.
\end{equation*}

For a $\Z$-module $E$, we denote by $E_f$ its free part: $E_f:=E/\mathrm{Tor}(E)$.
By the exact sequence (\ref{EQ_Mayer-Vietoris}), it follows that $H_2(\overline{B^*},\Z)_f$ is embedded into $H_2(X,\Z)_f$ primitively.
By Proposition \ref{Nik Disc form}, we can identify $H_2(\overline{B^*},\Z)_f \cong H_2(S,\Z)/H_2(S,\Z)_G$ with $(M_G)^\vee$.
By the Poincar\'e duality, we have
$$
 H^2(X,\Z)_f  \cong
 \frac{1}{|G|} M_G \oplus \Z \frac{1}{2} \alpha, \ \ \ 
 H^4(X,\Z)_f  \cong
 (M_G)^\vee \oplus \Z \frac{1}{|H|} \beta.
$$
Hence the trilinear form $\mu_X$ is of type $L$ with $L=M_G(1/2)$ and $N=\Z \frac{1}{2} \alpha$.
In fact, we have 
\begin{equation*}
 \mu_X(\frac{1}{|G|}\gamma_1,\frac{1}{|G|}\gamma_2,n \cdot \frac{1}{2} \alpha)
 =|G|^2\cdot \frac{n}{2}
  \langle \frac{1}{|G|}\gamma_1,\frac{1}{|G|}\gamma_2\rangle_{M_G}
 =\frac{n}{2} \langle \gamma_1,\gamma_2 \rangle_{M_G}
\end{equation*}
 for $\gamma_1,\gamma_2\in M_G$ and $n \in \Z$.
%
\end{proof}

\begin{Rem} \label{REM_non_cyclic}
For $H=C_2\times C_2$ and $C_2\times C_4$, the cohomology group $H^4(X,\Z)_f$ is generated by $(M_G)^\vee$ and $\frac{1}{|G|}\gamma$, where $\gamma$ is defined as follows.
We use the same notation as in Example \ref{EX_d8c2}.
Let $D$ denote the reducible curve in $\PP^1 \times \PP^1$ defined by $x^2 w^2+y^2 z^2=0$.
Then there is a $G$-equivariant continuous map $\varphi\colon E \rightarrow D$.
The inverse image in $S \times E$ of the graph of $\varphi$ is a $2$-cycle.
We define $\gamma$ to be the Poincar\'e dual of the image of this $2$-cycle in $X$, which is divisible by $|G|$.
\end{Rem}

\begin{Prop}
The second Chern class $c_2(X)$ is given by $c_2(X)=\frac{24}{|G|}\beta$, where $\beta$ is defined in (\ref{EQ_alpha_beta}).  
\end{Prop}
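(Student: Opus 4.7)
The plan is to exploit the fact that $\pi \colon S \times E \to X$ is étale, so that the computation of $c_2(X)$ reduces to a computation on the smooth product $S \times E$, where everything is transparent.

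First, since $\pi$ is étale, we have $T_{S\times E} \cong \pi^* T_X$, and hence $\pi^* c_2(X) = c_2(T_{S\times E})$ in $H^4(S\times E,\Z)$. Next I would expand $c_2(T_{S\times E})$ via the Whitney sum formula applied to $T_{S\times E} \cong p_S^* T_S \oplus p_E^* T_E$. Because $E$ is an elliptic curve, $T_E$ is trivial, so $c_1(p_E^*T_E)=c_2(p_E^*T_E)=0$; because $S$ is a K3 surface, $c_1(T_S)=0$. The only surviving term is therefore
\begin{equation*}
c_2(T_{S\times E}) \;=\; p_S^* c_2(T_S) \;=\; 24 \cdot p_S^*[\mathrm{pt}_S]^{\mathrm{PD}} \;=\; 24 \cdot [\{\mathrm{pt}_S\}\times E]^{\mathrm{PD}},
\end{equation*}
where I used $\chi(S)=24$ to evaluate $c_2(T_S)$.

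Then I would apply $\pi_*$ to the identity $\pi^* c_2(X) = 24 \cdot [\{\mathrm{pt}_S\}\times E]^{\mathrm{PD}}$. Since $\pi$ is an étale Galois covering with group $G$, the projection formula gives $\pi_* \pi^* = |G| \cdot \mathrm{id}$ on cohomology, and compatibility of pushforward with Poincaré duality yields
\begin{equation*}
\pi_*\bigl([\{\mathrm{pt}_S\}\times E]^{\mathrm{PD}}\bigr) \;=\; \bigl(\pi_*[\{\mathrm{pt}_S\}\times E]\bigr)^{\mathrm{PD}} \;=\; \beta.
\end{equation*}
Combining these yields $|G|\cdot c_2(X) = 24\,\beta$, i.e.\ $c_2(X)=\tfrac{24}{|G|}\beta$, as desired.

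There is really no serious obstacle here; the only things to check carefully are (i) the identity $\pi_*\pi^* = |G|\cdot\mathrm{id}$, which holds because $\pi$ is a finite étale cover of degree $|G|$, and (ii) the compatibility of $\pi_*$ with Poincaré duality, which is standard for proper maps between oriented manifolds of the same dimension. Implicit in writing $c_2(X) = \tfrac{24}{|G|}\beta$ is that $\beta$ is divisible by $|G|$ in $H^4(X,\Z)_f$, which was already noted in the proof of Proposition \ref{Tri}; so the right-hand side is a well-defined integral class.
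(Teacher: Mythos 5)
Your argument is correct and is essentially the paper's own proof: pull back along the \'etale cover to get $\pi^*c_2(X)=c_2(S\times E)=24\,p_1^*\beta_S$ and push forward, using $\pi_*\pi^*=|G|\cdot\id$, to obtain $|G|\,c_2(X)=24\beta$. One small correction to your closing remark: the proof of Proposition \ref{Tri} records that $\beta$ is divisible by $|H|$, not by $|G|=2|H|$ (indeed $\frac{1}{|H|}\beta$ is a generator of its summand of $H^4(X,\Z)_f$), but this already suffices for integrality since $\tfrac{24}{|G|}\beta=\tfrac{12}{|H|}\beta$.
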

\begin{proof}
Let $\beta_S$ be a generator of $H^4(S,\Z)$ with $\int_S\beta_S=1$. 
By the universal property of characteristic classes, we have $\pi^*c_2(X)=c_2(S\times E)=24p_1^*\beta_S$, 
where $p_1 \colon S\times E\rightarrow S$ is the first projection.   
Applying the transfer map $\pi_*$, we obtain $|G|c_2(X)=24\pi_*p_1^*\beta_S=24\beta$. 
\end{proof}




\section{Yukawa Couplings} \label{Mirror Symmetry}
We begin our discussion on mirror symmetry. 
Calabi--Yau threefolds $X$ of type K are, by the construction, a topological self-mirror threefolds, that is, $h^{1,1}(X)=h^{1,2}(X)$.    
However, mirror symmetry should involve more than the mere exchange of Hodge numbers. 
We will see that the mirror symmetry of Calabi--Yau threefolds of type K bears a resemblance to the mirror symmetry of Borcea--Voisin threefolds \cite{Vo, Bo}.  
The latter relies on the strange duality of certain involutions of K3 surfaces discovered by Nikulin \cite{Ni3}. 
Our study, on the other hand, will employ the classification of lattices $M_G$ and $N_G$ (Theorem \ref{G-inv lattice}). 

As a warm-up, let us consider the case $G\cong C_2$. 
A generic K3 surface $S$ with an Enriques involution is a self-mirror K3 surface in the sense of Dolgachev \cite{Do}, that is,
\begin{equation}
U \oplus NS(S) \cong T(S) \label{Enriques duality}
\end{equation}
where $NS(S)$ and $T(S)$ are given as $M_G$ and $N_G$ respectively in this case. 
Thus it is no wonder that the corresponding Enriques Calabi--Yau threefold (Example \ref{Enr CY3}) is a self-mirror threefold. 
For $G \not \cong C_2$, the corresponding K3 surface cannot be self-mirror symmetric as $\rk T(S)<12$. 
Nevertheless, as we will see, the Calabi--Yau threefold $X=(E\times S)/G$ is self-mirror symmetric. 
In general, $M_G$ and $N_G$ do not manifest symmetry over $\Z$, 
but the duality
\begin{equation}
U \oplus M_G \cong N_G \label{duality}
\end{equation}
still holds over $\Q$ or an extension of $\Z$ (Remark \ref{REM_Q_isom}).  
Note that $M_G$ is not equal to $NS(S)$ but to $NS(S)^G$, while $T(S)=N_G$ generically holds. 
The duality (\ref{duality}) can be thought of as an $H$-equivariant version of the duality (\ref{Enriques duality}) 
since we have $M_G=(H^2(S,\Z)^H)^\iota$ and $N_G=(H^2(S,\Z)^H)_\iota$.

\subsection{Moduli Spaces and Mirror Maps} \label{Moduli Spaces and Mirror Maps}
Throughout this section, $X$ is a Calabi--Yau threefold of type K and $S\times E\rightarrow X$ is its minimal splitting covering with Galois group $G$. 
We will analyze the moduli space of $X$ equipped with a complexified K\"{a}hler class. 
Such a moduli space locally splits into the product of the complex moduli space and the complexified K\"{a}hler moduli space. 
We will also sketch the mirror map.

We begin with the moduli space of marked K3 surfaces $S$ with a Calabi--Yau $G$-action (Definition \ref{def_cyg_k3}).   
The period domain of such K3 surfaces is given by 
$$
\mathcal{D}_{S}^{G}:=\{\C\omega_{S} \in \mathbb{P}(N_G \otimes \C) \bigm| \langle \omega_{S},\omega_{S}\rangle=0, \  \langle\omega_{S},\overline{\omega}_{S}\rangle>0\}. 
$$
The extended $G$-invariant complexified K\"{a}hler moduli space\footnote{For simplicity, we use this definition, even though $\kappa_S$ may not be a K\"{a}hler class.} is defined as the tube domain 
$$
\mathcal{K}_{S,\C}^{G}:=\{B_S+i\kappa_{S}\in M_G\otimes \C \bigm| \kappa_{S}\in \mathcal{K} \},
$$
 where $\mathcal{K}$ is the connected component of $\{ \kappa \in M_G \otimes \R \bigm| \langle \kappa,\kappa \rangle > 0 \}$ containing a K\"{a}hler class.
The class $B_S$ is called the $B$-field, and usually defined modulo $H^2(S,\Z)$. 
This is where the Brauer group comes into the play, 
but for a moment we regard $\mathcal{K}_{S,\C}^{G}$ as a covering of the $G$-invariant complexified K\"{a}hler moduli space $\mathcal{K}_{S,\C}^{G}/H^2(S,\Z)$,  
analogous to $\mathcal{D}_{S}^{G}$ being a covering of the complex moduli space. 
We construct a mirror map at the level of theses coverings. 

\begin{Prop} \label{Mirror Map}
There exists a holomorphic isomorphism $q^{G}_{S} \colon \mathcal{K}_{S,\C}^{G} \rightarrow \mathcal{D}_{S}^{G}$.    
\end{Prop}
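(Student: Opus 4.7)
The plan is to construct $q_S^G$ explicitly via the standard tube-domain parametrization, taking advantage of the $\Q$-lattice isomorphism $U\oplus M_G \cong N_G$ of Remark \ref{REM_Q_isom} (extended to $\C$). Fix such an identification once and for all, together with a standard basis $e,f$ of $U$ (so $e^{2}=f^{2}=0$ and $\langle e,f\rangle = 1$). Under the resulting identification $N_G \otimes \C = (U\oplus M_G)\otimes \C$ I would define
\[
q_S^G(\tau) := \bigl[\, e + \tau - \tfrac{1}{2}\langle \tau,\tau\rangle\, f \,\bigr]
\in \mathbb{P}(N_G\otimes\C),
\qquad \tau = B_S + i\kappa_S \in \mathcal{K}^{G}_{S,\C}.
\]
A direct bilinear computation, using $M_G\perp U$, gives $\langle \omega,\omega\rangle \equiv 0$ on the nose, and unpacking $\tau = B_S + i\kappa_S$ yields $\langle \omega,\bar\omega\rangle = 2\langle \kappa_S,\kappa_S\rangle > 0$ since $\kappa_S$ lies in the positive cone $\mathcal{K}$. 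Hence $q_S^G$ is a well-defined holomorphic map into $\mathcal{D}_S^G$, and it is visibly injective by reading off the $e$-component of a representative.

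For the inverse, given $[\omega]\in\mathcal{D}_S^G$, write $\omega = a e + b f + \mu$ with $\mu\in M_G\otimes\C$. The substantive step is to verify that $a = \langle \omega, f\rangle \ne 0$: writing $\omega = x + iy$, the conditions $\omega^2=0$ and $\omega\bar\omega>0$ force $x^2 = y^2 > 0$ and $x\perp y$, so the real $2$-plane $\R x \oplus \R y$ is positive-definite. If $a$ vanished, this plane would lie inside $f^{\perp}$, whose form has signature $(1,\rk M_G)$ together with a null direction along $f$ and therefore admits no positive-definite $2$-plane, a contradiction. Normalizing $a=1$, the isotropy relation $\omega^2=0$ forces $b = -\tfrac12\langle \mu,\mu\rangle$; the assignment $[\omega]\mapsto \mu$ is then a holomorphic inverse to $q_S^G$, and the inequality $\langle \omega,\bar\omega\rangle > 0$ translates into $\langle \mathrm{Im}\,\mu, \mathrm{Im}\,\mu\rangle > 0$, placing $\mathrm{Im}\,\mu$ in the positive cone of $M_G\otimes \R$.

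The only real obstacle is a choice-of-component issue: both $\mathcal{D}_S^G$ and the positive-square locus $\{\kappa\in M_G\otimes\R \mid \kappa^2>0\}$ decompose into two connected components, interchanged respectively by $\omega\leftrightarrow\bar\omega$ and $\kappa\mapsto -\kappa$, whereas $\mathcal{K}^{G}_{S,\C}$ is connected because $\mathcal{K}$ is. The formula for $q_S^G$ visibly hits a single component of $\mathcal{D}_S^G$ — the one in which $\mathrm{Im}\,\mu$ ends up inside $\mathcal{K}$ — so to obtain a genuine isomorphism I would fix $\mathcal{D}_S^G$ to denote that component (equivalently, the one containing the period $\omega_S$ of a K3 surface with Calabi--Yau $G$-action realized as in Section \ref{SECT_construction}). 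Since the other component is biholomorphic via complex conjugation, this convention is cosmetic.
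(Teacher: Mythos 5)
Your proof is correct and takes essentially the same route as the paper, which simply writes down the identical formula $B_S+i\kappa_S \mapsto \C\bigl(e-\tfrac{1}{2}\langle B_S+i\kappa_S,B_S+i\kappa_S\rangle f+B_S+i\kappa_S\bigr)$ and cites Dolgachev and Voisin for the tube-domain realization; your verification of isotropy, positivity, the inverse, and the connected-component convention supplies exactly the details the paper delegates to those references. One harmless slip: the form restricted to $f^{\perp}$ has signature $(1,\rk M_G-1)$ plus the null direction $\R f$ (not $(1,\rk M_G)$), but the only point you need---that $f^{\perp}$ contains no positive-definite $2$-plane---still holds.
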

\begin{proof}
There exists an isomorphism $N_G\otimes\Q\cong (U\oplus M_G) \otimes \Q$ as quadratic spaces over $\Q$ (Remark \ref{REM_Q_isom}). 
The holomorphic isomorphism $q^{G}_{S} \colon \mathcal{K}_{S,\C}^{G}\rightarrow \mathcal{D}_{S}^{G}$ is then given by the mapping 
$$
B_S+i\kappa_{S} \mapsto \C(e-\frac{1}{2}\langle B_S+i\kappa_{S}, B_S+i\kappa_{S} \rangle f+B_S+i\kappa_{S}), 
$$
where $e$ and $f$ are the standard basis of $U$. 
This is known as the tube domain realization \cite{Do, Vo}. 
\end{proof}

Let us next recall mirror symmetry of the elliptic curves. 
Consider an elliptic curve $E=\C/(\Z\oplus \Z\tau_{E})$ with a Calabi--Yau $G$-action, 
by which we mean a $G$-action on $E$ described in Proposition \ref{PROP_OS}. 
We can continuously deform the $G$-action as we vary the moduli parameter.    
The period domain of such elliptic curves is thus given by 
$$
\mathcal{D}_{E}:=\{\C\phi \in \PP(H^1(E,\Z)\otimes \C) \bigm| i\int_E\phi \cup \overline{\phi} >0\}. 
$$
Given a symplectic basis $(\alpha,\beta)$ of $H^1(E,\Z)$, we may normalize $\phi=\alpha +\tau_E \beta$ for $\tau_E \in \HH$. 
The extended complexified K\"{a}hler moduli space of $E$ is identified, through integration over $E$, with the set 
$$
\mathcal{K}_{E,\C}:=\{B_E+i\kappa_{E} \in H^{1,1}(E)\cong \C \bigm| \kappa_{E}>0\}. 
$$
There is an isomorphism $q_{E} \colon \mathcal{K}_{E,\C}\rightarrow \mathcal{D}_{E}$ given by 
\begin{equation}
q_{E}(B_E+i\kappa_{E}):=\C(\alpha+(B_E+i\kappa_{E})\beta). \label{Mirror Map Elliptic}
\end{equation} 
We associate to a pair $(E_1,B_{E_1}+i\kappa_{E_1})$ the mirror pair $(E_2,\tau_{E_1})$ with $\tau_{E_2}:=B_{E_1}+i\kappa_{E_1}$. 

Given a point $(B_S+i\kappa_{S},B_E+i\kappa_{E})$ in the extended complexified K\"{a}hler moduli space $\mathcal{K}_{X,\C}:=\mathcal{K}_{S,\C}^{G}\times \mathcal{K}_{E,\C}$ of $X$, 
the point 
$$
(q^{G}_{S}(B_S+i\kappa_{S}), q_E(B_E+i\kappa_{E}))\in \mathcal{D}_{X}:=\mathcal{D}_{S}^{G}\times \mathcal{D}_{E}
$$  
determines a pair $(S^\vee,E^\vee)$ of a K3 surface and an elliptic curve, 
and hence another  Calabi--Yau threefold $X^\vee:=(S^\vee\times E^\vee)/G$ of type K\footnote{Precisely, the set of points in $\mathcal{D}_X$ corresponding to Calabi--Yau threefolds of type K is an open dense subset of $\mathcal{D}_X$ (see \cite{HK}).}.
In the same manner, a point $(\omega_{S},\tau_{E})\in \mathcal{D}_{X}$ determines 
an extended complexified K\"{a}hler structure $((q_{S}^{G})^{-1}(\omega_{S}),q_E^{-1}(\tau_E))\in \mathcal{K}_{X,\C}$ on the pair $(S^\vee, E^\vee)$, 
which subsequently determines an extended complexified K\"{a}hler structure on $X^\vee$. 
Therefore we can think of the map $q=(q_{S}^{G},q_{E})\colon \mathcal{K}_{X,\C} \rightarrow \mathcal{D}_X$ 
 as the {\it self-mirror map} of the family of Calabi--Yau threefolds $X$.  



\subsection{A- and B-Yukawa Couplings}
We now introduce the A-Yukawa coupling with a complexified K\"{a}hler class $B+i\kappa \in \mathcal{K}_{X,\C}/H^2(X,\Z)$. 
The A-Yukawa coupling $Y_{A}^X \colon H^{1,1}(X)^{\otimes3}\rightarrow \C [[{\bold q}]]$ is the symmetric trilinear form given by
$$
Y_{A}^X(H_{1},H_{2},H_{3})
=H_{1}\cup H_{2}\cup H_{3}
+\sum_{\beta\ne0}n_{\beta}\frac{e^{2\pi i \int_{\beta}(B+i\kappa)}}{1-e^{2\pi i \int_{\beta}(B+i\kappa)}}\prod_{i=1}^{3}\int_{\beta}H_{i}, 
$$ 
where the sum is over the effective classes $\beta\in H_{2}(X,\Z)$ and $\C [[{\bold q}]]$ is the Novikov ring. 
The number $n_{\beta}$ is naively the number of rational curves on $X$ in the homology class $\beta$. 
A mathematical definition of $n_{\beta}$ is carried out via the Gromov--Witten invariants of $X$.    
At the {\it large volume limit} (LVL) of $H^2(X,\R)/H^2(X,\Z)$, where $\int_{\beta}\kappa\rightarrow \infty$ for all effective curve classes $\beta\in H_{2}(X,\Z)$, 
the A-Yukawa coupling $Y_A$ asymptotically converges to the cup product as the quantum correction terms vanish\footnote{For Calabi--Yau threefolds of type K, no quantum correction in $Y_{A}^X$ is expected because the moduli space of $g=0$ stable maps probably involves a factor of $E$. This is also examined in Section \ref{B-Model Computation}.}. 

It is instructive to briefly review a role of the Brauer group in mirror symmetry \cite{AM}. 
In the A-model topological string theory, the correlation functions depend on {\it instantons}, 
namely holomorphic maps $\sigma \colon \Sigma_g\rightarrow X$ from the world-sheet $\Sigma_g$ to the target threefold $X$. 
The action $\mu$ is required to depend linearly on the homology class of the image of the map; $\mu\in \Hom(H_2(X,\Z),\C^\times) \cong H^2(X,\C^\times)$. 
Note that $\mathrm{Tor}(H^2(X,\C^\times))$ is isomorphic to $\mathrm{Br}(X)$ as an abstract group.
By the exact sequence
$$
0\rightarrow H^2(X,\Z)/\mathrm{Tor}(H^2(X,\Z)) \rightarrow H^2(X,\C) \rightarrow H^2(X,\C^\times) \rightarrow \mathrm{Br}(X) \rightarrow 0
$$
we see that 
$$
H^2(X,\C^\times) \cong \left(H^2(X,\C)/H^2(X,\Z)\right) \oplus \mathrm{Br}(X). 
$$
Assuming that the B-field represents a class in $H^2(X,\R)/H^2(X,\Z)$, 
we think of the complexified K\"ahler moduli space $\mathcal{K}_{X,\C}$ lying in the first summand. 
Then the above isomorphism shows that we need to incorporate in the A-model moduli space the contribution coming from $\mathrm{Br}(X)$.  
However, the choice of $\alpha \in \mathrm{Br}(X)$ does not matter at the LVL, and the components of the moduli space\footnote{No flop is allowed \cite[Proposition 5.5]{HK}.},  
each parametrized by $B+i\kappa \in \mathcal{K}_{X,\C}/H^2(X,\Z)$ 
but having different values of $\alpha$, are joined at the LVL (Figure \ref{fig:A-model moduli}). 
\begin{figure}[htbp]
 \begin{center} 
  \includegraphics[width=30mm]{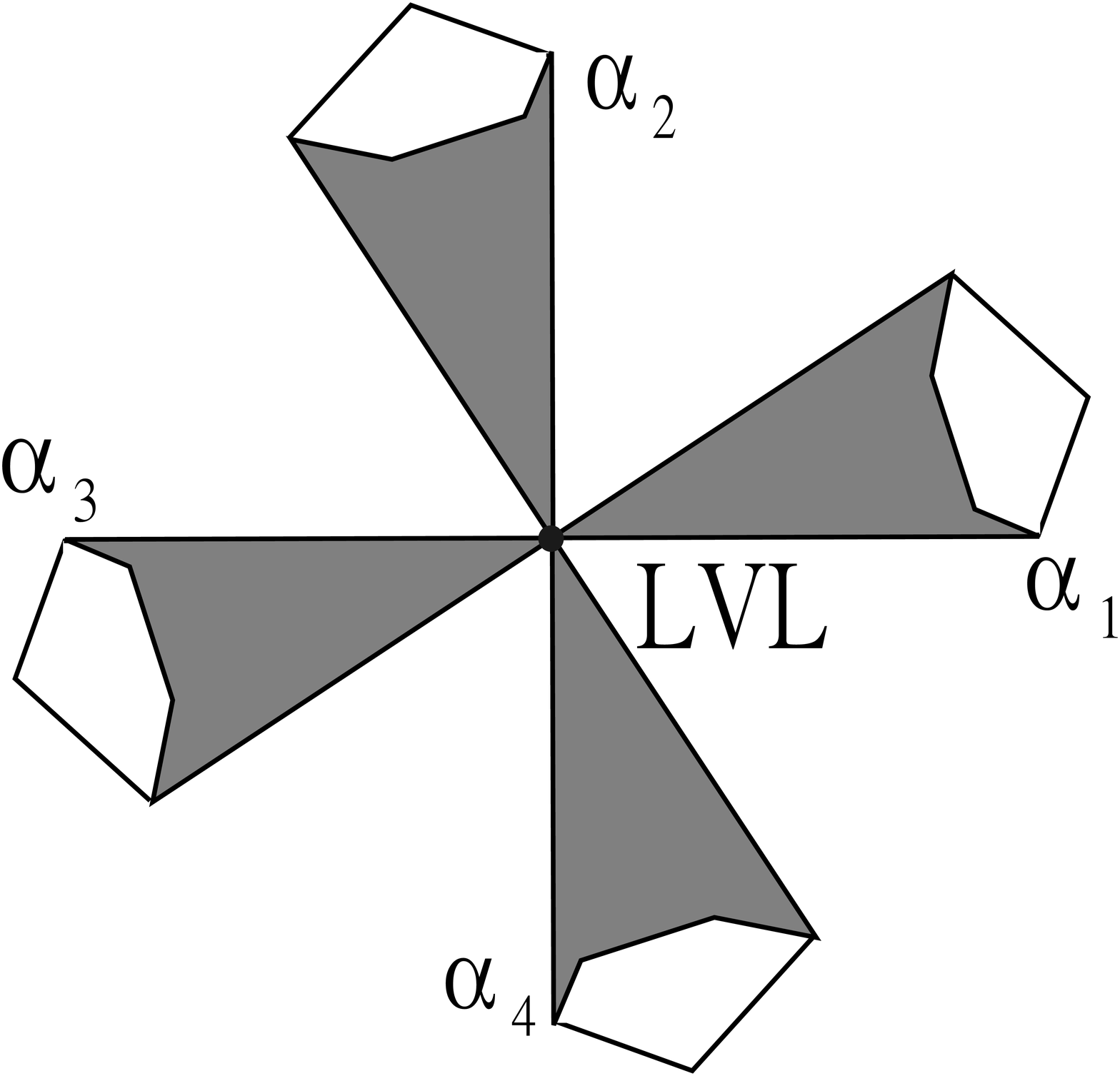}
 \end{center}
 \caption{A-model moduli space for $\mathrm{Br}(X)\cong \Z_2^{\oplus2}$}
 \label{fig:A-model moduli}
\end{figure}
For example, it is interesting to study the role of Brauer group in the space of Bridgeland stability conditions. 
For the rest of this paper, we will ignore the issue of the Brauer group without harm, by assuming that such an $\alpha$ is chosen. 

We next define the B-Yukawa coupling.   
First recall the isomorphism $H^{2,1}(X)\cong H^{1}(X,T_{X})$. 
The differential of the period map on the Kuranishi space for $X$ is identified with the interior product $H^{1}(X,T_{X})\rightarrow \Hom(H^{k,l}(X),H^{k-1,l+1}(X))$ 
for $k,l \in \N$. 
 By iterating these, we obtain a symmetric trilinear form, called the B-Yukawa coupling, 
 $$
 Y_{B}^X \colon H^{1}(X,T_{X})^{\otimes3}\longrightarrow \Hom(H^{3,0}(X),H^{0,3}(X))\cong \C, 
 $$
 where the last identification depends on a trivialization $H^{3,0}(X) \cong \C$. 
Recall that we have the natural isomorphism
\begin{equation*}
  H^{2,1}(X)  \cong H^{1,1}(S)^H_{C_2} \otimes H^{1,0}(E)\oplus H^{2,0}(S) \otimes H^{0,1}(E).
\end{equation*}
Choose holomorphic volume forms $dz_E$ and $\omega_S$ on $E$ and $S$ respectively. 
Using the isomorphism $\varphi \colon T_{S}\rightarrow \varOmega_{S}, v \mapsto \iota_v\omega_S=\omega_S(v,*)$, 
we identify the following:
\begin{align}
H^{1,1}(S)_{C_{2}}^{H}\otimes H^{1,0}(E) & \cong H^{1}(S,T_{S}),\ \ \ \eta\wedge \overline{\theta} \otimes dz_{E} \leftrightarrow \varphi^{-1}(\eta)\otimes\overline{\theta}; \notag  \\
H^{2,0}(S) \otimes H^{0,1}(E) & \cong H^{1}(E,T_{E}), \ \ \ \omega_{S}\otimes d\overline{z}_{E}\leftrightarrow \frac{\partial}{\partial z_{E}}\otimes d\overline{z}_{E}. \notag
\end{align}
Here $\eta \ (\overline{\theta})$ is the (anti-)holomorphic part of $\eta\wedge \overline{\theta}\in H^{1,1}(S)_{C_{2}}^{H}$.    
\begin{Lem}
Assume that the trivialization of $H^{3,0}(X)\cong H^{2,0}(S)\otimes H^{1,0}(E)$ is given by a nowhere-vanishing global section $\omega_{S}\otimes dz_{E}$. 
Then we have 
$$
Y_{B}^X(\omega_{S}\otimes d\overline{z}_{E}, \eta_{1}\wedge \overline{\theta}_{1} \otimes dz_{E},\eta_{2}\wedge \overline{\theta}_{2} \otimes dz_{E})
=\langle\eta_{1}\wedge \overline{\theta}_{1},\eta_{2}\wedge \overline{\theta}_{2} \rangle_{H^{1,1}(S)_{C_{2}}^{H}}
$$
where $\langle*,**\rangle_{H^{1,1}(S)_{C_{2}}^{H}}$ denotes the cup product restricted to $H^{1,1}(S)_{C_{2}}^{H}$. 
The B-Yukawa coupling $Y_{B}^X$ is obtained by extending the above form trilinearly and symmetrically and by setting other non-trivial couplings to be 0. 
\end{Lem}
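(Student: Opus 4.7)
The plan is to lift the Yukawa-coupling computation to the \'etale Galois cover $\pi\colon S\times E\to X$, where the chosen trivialization pulls back to the honest top form $\pi^*(\omega_S\otimes dz_E)=\omega_S\wedge dz_E$ and the tangent bundle splits as $T_{S\times E}=T_S\oplus T_E$. The definition of $Y_B^X$ as the iterated interior product $H^1(T_X)^{\otimes 3}\to\Hom(H^{3,0}(X),H^{0,3}(X))\cong\C$ then computes, equivariantly on the cover, as $v_1\otimes v_2\otimes v_3\mapsto\int(\omega_S\wedge dz_E)\wedge(v_3\cdot v_2\cdot v_1)(\omega_S\wedge dz_E)$ (up to an overall normalization depending on $|G|$ and on $\int_E dz_E\wedge d\overline{z}_E$). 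Under this identification the classes $\eta\wedge\overline{\theta}\otimes dz_E\in H^{1,1}(S)^H_{C_2}\otimes H^{1,0}(E)$ and $\omega_S\otimes d\overline{z}_E\in H^{2,0}(S)\otimes H^{0,1}(E)$ correspond precisely to $\varphi^{-1}(\eta)\otimes\overline{\theta}\in H^1(S,T_S)^H_{C_2}$ and $(\partial/\partial z_E)\otimes d\overline{z}_E\in H^1(E,T_E)$, as spelled out in the excerpt.

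The vanishing of all but one type of triple is handled by type considerations on the decomposition $H^{2,1}(X)=H^{1,1}(S)^H_{C_2}\otimes H^{1,0}(E)\oplus H^{2,0}(S)\otimes H^{0,1}(E)$. If two or more of the $v_i$ are of ``$E$-type'' $(\partial/\partial z_E)\otimes d\overline{z}_E$, then $(v_2\cdot v_1)(\omega_S\wedge dz_E)$ already vanishes: after the first contraction one is left with $\omega_S\wedge d\overline{z}_E$, on which no further $T_E$-vector acts nontrivially. If all three $v_i$ are of ``$S$-type'' $\varphi^{-1}(\eta_i)\otimes\overline{\theta}_i$, the two-fold contraction produces a scalar multiple of $\overline{\theta}_{i_1}\wedge\overline{\theta}_{i_2}\wedge dz_E$, which has no $(1,0)$-component on $S$; the third $\varphi^{-1}(\eta_{i_3})$ then annihilates it. Thus only the mixed triple in the statement survives.

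For this surviving triple, a step-by-step contraction---using only the defining relations $\iota_{\varphi^{-1}(\eta)}\omega_S=\eta$ and $\iota_{\partial/\partial z_E}dz_E=1$---yields $(v_3\cdot v_2\cdot v_1)(\omega_S\wedge dz_E)$ as a scalar multiple of $\omega_S(\varphi^{-1}(\eta_1),\varphi^{-1}(\eta_2))\cdot\overline{\theta}_1\wedge\overline{\theta}_2\wedge d\overline{z}_E$. The essential algebraic identity $\omega_S(\varphi^{-1}(\eta_1),\varphi^{-1}(\eta_2))\cdot\omega_S=\eta_1\wedge\eta_2$, immediate pointwise from $\varphi(v)=\iota_v\omega_S$ together with the antisymmetry of $\omega_S$, then converts the wedge $(\omega_S\wedge dz_E)\wedge(v_3\cdot v_2\cdot v_1)(\omega_S\wedge dz_E)$ into $(\eta_1\wedge\overline{\theta}_1)\wedge(\eta_2\wedge\overline{\theta}_2)\wedge dz_E\wedge d\overline{z}_E$. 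Integrating over $S\times E$, and absorbing the universal factor $\frac{1}{|G|}\int_E dz_E\wedge d\overline{z}_E$ into the chosen normalization of the trivialization of $H^{3,0}(X)$, produces the claimed value $\langle\eta_1\wedge\overline{\theta}_1,\eta_2\wedge\overline{\theta}_2\rangle_{H^{1,1}(S)^H_{C_2}}$.

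The main bookkeeping hurdle will be a careful tracking of signs coming from the iterated inner products and from reordering mixed-degree wedge factors; the substantive step, however, is the pointwise identity $\omega_S(\varphi^{-1}(\eta_1),\varphi^{-1}(\eta_2))\omega_S=\eta_1\wedge\eta_2$, which is precisely what translates the tangent-bundle contraction on $S$ into the desired cup product on $H^{1,1}(S)$.
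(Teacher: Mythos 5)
Your proposal is correct and matches the paper's approach: the paper's proof consists of the single sentence that the assertion follows from a straightforward calculation using the identifications $\eta\wedge\overline{\theta}\otimes dz_E\leftrightarrow\varphi^{-1}(\eta)\otimes\overline{\theta}$ and $\omega_S\otimes d\overline{z}_E\leftrightarrow(\partial/\partial z_E)\otimes d\overline{z}_E$, and your argument is exactly that calculation carried out, with the type-vanishing analysis and the pointwise identity $\omega_S(\varphi^{-1}(\eta_1),\varphi^{-1}(\eta_2))\,\omega_S=\eta_1\wedge\eta_2$ supplying the details the authors omit.
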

\begin{proof}
Using the above identification of cohomology groups, we may show the assertion by a straightforward calculation.   
\end{proof}
Therefore the B-Yukawa coupling is of type $\langle*,**\rangle_{H^{1,1}(S)_{C_{2}}^{H}}$ after $\C$-extension (see Definition \ref{DEF_type_L}). 
On the other hand, by Theorem \ref{G-inv lattice}, there always exists a decomposition  
$$
N_G=U(k) \oplus  U(k)^{\perp}_{N_G}, \ \ \  U(k)^{\perp}_{N_G}\otimes \Q \cong M_G\otimes \Q,
$$
for some $k \in \N$. 
The complex moduli space of $S$ admits the Bailey--Borel compactfication 
and the primitive isotropic sublattices of $T(S)=N_G$ correspond to the cusps of the compactification \cite{Sc}. 
Therefore,
 an isotropic vector $e \in U(k)$ corresponds to a {\it large complex structure limit} (LCSL) in the context of mirror symmetry. 

Let us now investigate the behavior of the B-Yukawa coupling $Y_B^X$ when the moduli point of our K3 surface $S$ approaches to the cusp. 
The Hodge $(1,1)$-part of the quadratic space $H^{2}(S,\C)_{C_{2}}^{H}$ is identified with the quotient quadratic space
$$
H^{1,1}(S)_{C_{2}}^{H}\cong(\omega_{S})_{H^{2}(S,\C)_{C_{2}}^{H}}^{\perp}/\C\omega_{S}. 
$$
Hence, at the limit as the period $\omega_{S}$ approaches to $e$, 
the quadratic space $H^{1,1}(S)_{C_{2}}^{H}$ becomes the $\C$-extension of $e^\perp/\Q e\cong M_{G}\otimes \Q$. 
Thereby, the quadratic factor of $Y_B^X$ asymptotically converges to $M_G \otimes \C$ near the LCSL. 

Given a pair $(\mathscr{X},\mathscr{Y})$ of mirror families of Calabi--Yau threefolds. 
One feature of mirror symmetry is the identification of the Yukawa couplings 
$$
Y_{A}^{X}(H_{1},H_{2},H_{3})=Y_{B}^{Y}(\theta_{1},\theta_{2},\theta_{3})
$$
after a transformation, called the {\it mirror map}, of local moduli parameters $H_i$
around the LVL $\mathcal{K}_{X,\C}/H^2(X,\Z)$ and $\theta_i$ around a LCSL of $\mathcal{M}_Y$. 
Here $X$ is a general member of $\mathscr{X}$ near the LVL and $Y$ is a general member of $\mathscr{Y}$ near the LCSL. 

 \begin{Thm} \label{Yukawa}
 Let $X$ be a Calabi--Yau threefold of type K. 
 The asymptotic behavior of the A-Yukawa coupling $Y_A^X$ around the LVL coincides, up to scalar multiplication,
 with that of the B-Yukawa coupling $Y_B^X$ around a LCSL. 
 The identification respects the rational structure of the trilinear forms. 
 \end{Thm}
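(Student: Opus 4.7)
The plan is to match the two couplings term-by-term using the classification of $M_G$ and $N_G$ (Theorem \ref{G-inv lattice}) and the trilinear-form description already supplied by Proposition \ref{Tri} together with the Lemma just before the statement. First I would dispose of the A-side. At the LVL the instanton sum in $Y_A^X$ vanishes (as noted in the footnote, any $g=0$ stable map to $X$ must factor through $E$, killing all non-constant contributions), so $Y_A^X$ converges to the classical cup product $\mu_X$ on $H^2(X,\Z)_f$. By Proposition \ref{Tri} (supplemented by Remark \ref{REM_non_cyclic} for non-cyclic $H$), $\mu_X$ is of type $M_G(1/2)$: under the decomposition $H^2(X,\Q)\cong (M_G\otimes\Q)\oplus \Q\tfrac{1}{2}\alpha$, the form pairs two classes from $M_G\otimes\Q$ with $n\cdot\tfrac12\alpha$ to yield $\tfrac{n}{2}\langle\cdot,\cdot\rangle_{M_G}$.

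Next I would analyze the B-side asymptotics. By Theorem \ref{G-inv lattice}, $N_G$ admits a decomposition $N_G=U(k)\oplus U(k)^{\perp}_{N_G}$ with $U(k)^{\perp}_{N_G}\otimes\Q\cong M_G\otimes\Q$ (this is exactly the rational isomorphism of Remark \ref{REM_Q_isom}). Fix a cusp of the Baily--Borel compactification of $\mathcal{D}_S^G$ corresponding to a primitive isotropic vector $e\in U(k)\subset N_G$; this defines the LCSL. As $\C\omega_S\to \C e$, the quadratic space $H^{1,1}(S)^H_{C_2}\cong \omega_S^{\perp}/\C\omega_S$ degenerates to the $\C$-extension of $e^{\perp}/\Q e\cong U(k)^{\perp}_{N_G}\otimes\Q\cong M_G\otimes\Q$. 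Combined with the preceding Lemma, $Y_B^X$ is symmetric trilinear and, in the decomposition $H^{2,1}(X)\cong H^{1,1}(S)^H_{C_2}\otimes H^{1,0}(E)\oplus H^{2,0}(S)\otimes H^{0,1}(E)$, pairs two vectors from the first summand with one from the (one-dimensional) second summand via $\langle\cdot,\cdot\rangle_{H^{1,1}(S)^H_{C_2}}$, other couplings being zero. Asymptotically at the LCSL, this is precisely a trilinear form of type $M_G$ (in the sense of Definition \ref{DEF_type_L}), with the role of the distinguished rank-one factor $N$ played by $H^{2,0}(S)\otimes H^{0,1}(E)$.

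Finally I would identify the two descriptions through the mirror map. The tube-domain realization of Proposition \ref{Mirror Map} together with the elliptic mirror map \eqref{Mirror Map Elliptic} identifies local coordinates on $\mathcal{K}_{X,\C}/H^2(X,\Z)$ near the LVL with local coordinates on $\mathcal{D}_X$ near the LCSL; under this identification the rank-one factor $\Z\tfrac{1}{2}\alpha$ on the A-side matches the cusp direction $\C\tfrac{\partial}{\partial z_E}\otimes d\overline{z}_E$ on the B-side, while the $M_G$-factor on the A-side matches $H^{1,1}(S)^H_{C_2}\otimes dz_E$ at the LCSL. Since both trilinear forms are now of type $M_G$ over $\Q$, they agree up to a global scalar (coming from the $(1/2)$ twist on the A-side and from the particular choice of the holomorphic trivialization $\omega_S\otimes dz_E$), and the $\Q$-structures coincide because both reductions to $M_G\otimes\Q$ are induced from the integral lattice structures via Proposition \ref{Nik Disc form}.

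The main obstacle I anticipate is the bookkeeping of normalizations: keeping track of the factors $|G|$, $|H|$ and the half-integral generator $\tfrac12\alpha$ appearing in Proposition \ref{Tri} so that the degeneration of $\langle\cdot,\cdot\rangle_{H^{1,1}(S)^H_{C_2}}$ to the form on $M_G\otimes\Q$ (which depends on the chosen primitive isotropic $e\in U(k)$) produces precisely the same $\Q$-rational trilinear form, only up to the asserted scalar. The non-cyclic cases $H=C_2\times C_2$ and $C_2\times C_4$ require using the extra generator $\tfrac{1}{|G|}\gamma$ from Remark \ref{REM_non_cyclic} in place of $\tfrac{1}{|H|}\beta$, but the argument is otherwise identical.
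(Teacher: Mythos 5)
Your proposal is correct and follows essentially the same route as the paper: at the LVL the A-coupling degenerates to the cup product, which is of type $M_G(1/2)$ over $\Q$ by Proposition \ref{Tri} and Remark \ref{REM_non_cyclic}, while at the cusp determined by an isotropic $e\in U(k)\subset N_G$ the B-coupling degenerates to a trilinear form with a linear factor and residual quadratic part $e^\perp/\Q e\cong M_G\otimes\Q$, so the two agree up to scalar over $\Q$. The paper's own proof is exactly this summary of the preceding discussion; your extra care about matching the rank-one factors and the normalization constants is consistent with, though more explicit than, what the authors write.
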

 \begin{proof}
The assertion readily follows from the discussions in this section. 
As the complexified K\"{a}hler moduli $B+i\kappa \in \mathcal{K}_{X,\C}/H^2(X,\Z)$ approaches to the LVL, 
the A-Yukawa coupling $Y_A^X$ becomes the classical cup product on $H^{1,1}(X)$, 
which is the $\C$-extension of the trilinear form on $H^2(X,\Z)$.
By Theorem \ref{Tri} and Remark \ref{REM_non_cyclic}, the trilinear form on $H^2(X,\Z)$ is of type $L=M_G(1/2)$ over $\Q$.
On the other hand, as the period $\omega_{S}$ of the K3 surface $S$ approaches to the cusp of the period domain $\mathcal{D}^{G}_{S}$, 
the B-Yukawa coupling $Y_B^X$ becomes a trilinear form, which has a linear factor and whose residual quadratic form is the $\C$-extension of $e^\perp/\Q e\cong M_{G}\otimes \Q$. 
 \end{proof}

The identification in the proof of Theorem \ref{Yukawa} respects the integral structure, that is, $e^\perp/\Z e \cong M_{G}$, for some cases including $G\cong C_2$.
However, this is not true in general (see Theorem \ref{G-inv lattice}).


\subsection{B-Model Computation} \label{B-Model Computation}
In this section, we will compute the B-models of Calabi--Yau threefolds of type K with 3- or 4-dimensional moduli spaces. 
The necessary integrals reduce to those of $S$ and $E$, and thus there should be no instanton corrections. 
However it is interesting to see how they recover the lattice $M_G$. 
We will also find some modular properties in the computation, which may be useful in further investigations, such as the BCOV theory.   
For $G\cong D_{12}$, with the same notation as in Example \ref{Ex $D_{10}$}, the period integral of $S$ is,  
for the cycle $\gamma$ given by $|\frac{x}{y}|=|\frac{z}{w}|=\epsilon \ (0 < \epsilon \ll 1)$,  
$$
\Phi_0:=\int_{\gamma} \frac{(xdy-ydx)\wedge (zdw-wdz)}{\sqrt{Ax^2y^2z^2w^2+B(x^4z^4+y^4w^4)+C(x^4zw^3+y^4z^3w)}},
$$
where $[A:B:C]\in \PP^2$ is the moduli parameter.  
On the open set $\{A\ne 0\} \cong \C^2$ with $z_1:=(B/2A)^2$ and $z_2:=(C/2A)^2$, the period integral reads
$$
\Phi_0(z_1,z_2)=1+12(z_1+z_2)+420(z_1^2+4z_1z_2+z_2^2)+18480(z_1+z_2)(z_1^2+8z_1z_2+z_2^2)+\cdots. 
$$
A numerical computation shows that the Picard--Fuchs system is generated by 
$$
\Theta_i^2-4z_i(4\Theta_1+4\Theta_2+3)(4\Theta_1+4\Theta_2+1)
$$
for $i=1,2$, where $\Theta_i:=z_i\frac{\partial}{\partial z_i}$ is the Euler differential. 
We observe that the point $(z_1,z_2)=(0,0)$ is a LCSL of this family. 
The linear-logarithmic solutions are
\begin{align}
\Phi_1(z_1,z_2):=\Phi_0\log(z_1)+40z_1+64z_2+1556z_1^2+7904z_1z_2+2816z_2^2+\cdots, \notag \\
\Phi_2(z_1,z_2):=\Phi_0\log(z_2)+64z_1+40z_2+2816z_1^2+7904z_1z_2+1556z_2^2+\cdots. \notag
\end{align}
Then the mirror map around the LCSL is given by
\begin{align}
q_1=\exp(\Phi_1/\Phi_0)=z_1+8z_1(5z_1+8z_2)+4z_1(469z_1^2+2304z_1z_2+1024z_2^2)+\cdots, \notag \\
q_2=\exp(\Phi_2/\Phi_0)=z_2+8z_2(8z_1+5z_2)+4z_2(1024z_1^2+2304z_1z_2+469z_2^2)+\cdots. \notag
\end{align}
We write $q_i=e^{2\pi i t_i} \ (i=1,2)$ and we regard $(t_1,t_2)$ as the $G$-invariant complexified K\"ahler parameter of $S$ 
so that it descends to the complexified K\"ahler parameter of $X$. 
The techniques in \cite{LY,KM} shows that the inverse mirror maps read
\begin{align}
z_1(q_1,q_2)&=\frac{\vartheta_2^8(q_1)}{64(\vartheta_3^4(q_1)+\vartheta_4^4(q_1))^2}\left(1-\frac{\vartheta_2^8(q_2)}{(\vartheta_3^4(q_2)+\vartheta_4^4(q_2))^2}\right),\notag \\
z_2(q_1,q_2)&=\frac{\vartheta_2^8(q_2)}{64(\vartheta_3^4(q_2)+\vartheta_4^4(q_2))^2}\left(1-\frac{\vartheta_2^8(q_1)}{(\vartheta_3^4(q_1)+\vartheta_4^4(q_1))^2}\right), \notag 
\end{align}
where $\vartheta_2(q),\vartheta_3(q)$ and $\vartheta_4(q)$ are the Jacobi theta functions\footnote{
$
\vartheta_2(q):=\sum_{n \in \Z}q^{\frac{1}{2}(n+\frac{1}{2})^2}, 
\vartheta_3(q):=\sum_{n \in \Z}q^{\frac{n^2}{2}}, 
\vartheta_4(q):=\sum_{n \in \Z}(-1)^nq^{\frac{n^2}{2}}. 
$
}. 
The period $\Phi_0$ also reads, with respect to the mirror coordinates, 
$$
\Phi_0(q_1,q_2)=\frac{1}{2}\sqrt{(\vartheta_3^4(q_1)+\vartheta_4^4(q_1))(\vartheta_3^4(q_2)+\vartheta_4^4(q_2))}.
$$
We denote by $\nabla_{z_i}$ the Gauss--Manin connection with respect to the local moduli parameter $z_i$ and define 
$C_{z_i,\dots,z_j}:=\int_{S}\omega_S\wedge (\nabla_{z_i}\cdots\nabla_{z_j}\omega_S)$. 
Using the Griffith transversality relations 
$$
C_{z_i}=0, \ \ \ 3\partial_{z_1}C_{z_1,z_1}=2C_{z_1,z_1,z_1}, \ \ \ \partial_{z_2}C_{z_1,z_1}+2\partial_{z_1}C_{z_1,z_2}=2 C_{z_1,z_1,z_2}, 
$$
we determine the B-Yukawa couplings, up to multiplication by a constant, as follows:
\begin{align}
C_{z_i,z_i}(z_1,z_2)&=\frac{1}{2^{5}z_i(1-2^7(z_1+z_2+2^6z_1z_2)+2^{12}(z_1^2+z_2^2))} \ \ (i=1,2),\notag \\
C_{z_1,z_2}(z_1,z_2)&=\frac{1-64(z_1+z_2)}{2^{12}z_1z_2(1-2^7(z_1+z_2+2^6z_1z_2)+2^{12}(z_1^2+z_2^2))}. \notag
\end{align}
Via the mirror map, we determine the A-Yukawa couplings 
$$
K_{t_i,t_j}(q_1,q_2)=\frac{1}{\Phi_0(z_1,z_2)^2}\sum_{k,l=1}^2C_{z_k,z_l}(z_1,z_2)\frac{\partial z_k}{\partial t_i}\frac{\partial z_l}{\partial t_j}. 
$$
There is no quantum correction for a K3 surface and we confirm that $K_{t_1,t_1}=K_{t_2,t_2}=0$ and $K_{t_1,t_2}=1$. 
This recovers the fact $H^2(S,\Z)^{D_{12}}\cong U(2)$, up to multiplication by a constant. 
An identical argument works for $G\cong D_{10}$ and $C_2\times D_8$, 
and the B-model calculations are compatible with the previous sections. 

We turn to the elliptic curve and consider the following family of elliptic curves  in $\PP^2$ \cite{SB}  
\begin{equation*}
 x_1 x_2 x_3 - z (x_1+x_2)(x_2+x_3)(x_3+x_1)=0. 
\end{equation*}
It is equipped with a translation
 $(x_1:x_2:x_3) \mapsto (1/x_2:1/x_3:1/x_1)$,
 which generates the group $H\cong C_6$, and
 the parameter $z$ is considered as a coordinate of the modular curve $X_1(6)\cong \PP^1$.
The Picard--Fuchs operator and its regular solution are given by 
\begin{equation*}
 (8 z-1)(z+1) \Theta^2+z(16 z+7) \Theta+2z(4 z+1), \quad
 \Theta:=z \frac{d}{d z},
\end{equation*}
and 
\begin{equation*}
 \Phi_0^{\operatorname{ell}}(z) =
 \sum_{n=0}^{\infty}
 \sum_{k=0}^{n} \begin{pmatrix}n\\k\end{pmatrix}^3z^n
 = 1+2 z+10 z^2+56 z^3+346 z^4+2252 z^5+15184 z^6+\cdots.
\end{equation*}
The parameter $z$ as a modular function on the upper half-plane $\HH$
 has the expansion
\begin{equation*}
 z(q)
 =
 \left(
  \frac{\eta(t)^3 \eta(t/6)}{\eta(t/2)^3 \eta(t/3)}
 \right)^3
 = q^{1/6}
 (1-3 q^{1/6}+3 q^{1/3}+5 q^{1/2}-18 q^{2/3}+15 q^{5/6}+\cdots)
\end{equation*}
 around $z=0$,
 where we write $q=e^{2 \pi i t}$ ($t \in \HH$) and
 $\eta(t)$ is the Dedekind eta function.
As in the K3 surface case, the fundamental period is expressed as a modular form:
\begin{align*}
 \Phi_0^{\operatorname{ell}}(q)
 &= \frac{\eta(t/2)^6 \eta(t/3)}{\eta(t)^3 \eta(t/6)^2}
 =\frac{1}{3} \sum_{(n,m)\in \Z^2} \left( q^{(n^2+n m+m^2)/6}
 +2 q^{(n^2+n m+m^2)/3} \right) \\
 &= 1+2 q^{1/6}+4 q^{1/3}+2 q^{1/2}+2 q^{2/3}+4 q+\cdots.
\end{align*}
Therefore, we observe that the period integral and the mirror maps of the threefold $X$ are all written in terms of modular forms for $G\cong D_{12}$. 



We may apply a similar argument for $G\cong D_{8}$. 
In this case, the period integral for the K3 surface is given by
$$
\Phi_0(z_1,z_2,z_3)=1+12(z_1+z_2+z_3)+420(z_1^2+z_2^2+z_3^2+4z_1z_2+4z_2z_3+4z_1z_3)+\cdots
$$
and the Picard--Fuchs system is generated by, for ${\bf a}=[a_1,\dots,a_6] \in \C^6$, 
\begin{align}
\mathcal{D}_{{\bf a}}=&(a_1-64a_1z_1+4(-16a_1-16a_2+3a_5)z_2-12a_4z_3)\Theta_1^2 \notag \\
&+(a_2-12a_5z_1-64a_2z_2+4(-16a_2-16a_3+3a_6)z_3)\Theta_2^2 \notag \\
&+(a_3+4(-16a_1-16a_3+3a_4)z_1-12a_6z_2-64a_3z_3)\Theta_3^2 \notag \\
&-128(a_1z_1+a_2z_2+a_3z_3)(\Theta_1\Theta_2+\Theta_2\Theta_3+\Theta_3\Theta_1) \notag \\
&- 64(a_1z_1+a_2z_2+a_3z_3)(\Theta_1+\Theta_2+\Theta_3) -12(a_1z_1+a_2z_2+a_3z_3).\notag
\end{align} 



\section{Special Lagrangian Fibrations}

Let $X$ be a Calabi--Yau manifold of dimension $n$ with a Ricci-flat K\"{a}hler metric $g$. 
Let $\kappa$ be the K\"{a}hler form associated to $g$. 
There exists a nowhere-vanishing holomorphic $n$-form $\Omega_X$ on $X$ and we normalize $\Omega_X$ by requiring 
$$
(-1)^{\frac{n(n-1)}{2}}(\frac{i}{2})^{n}\Omega_X \wedge \overline{\Omega}_X=\frac{\kappa^{n}}{n!}, 
$$
which uniquely  determines $\Omega_X$ up to a phase $e^{i \theta} \in S^1$. 
A submanifold $L\subset X$ of real dimension $n$ is called special Lagrangian if $\kappa|_{L}=0$ and $\Re(\Omega_X)|_{L}=e^{i\theta_L}\mathrm{vol}_L$ for a constant $\theta_L \in \R$. 
Here $\mathrm{vol}_L$ denotes the Riemannian volume form on $L$ induced by $g$. 
A surjective map $\pi \colon X\rightarrow B$ is called a special Lagrangian $T^n$-fibration if a generic fiber is a special Lagrangian $n$-torus $T^n$. 


In \cite{SYZ} Strominger, Yau and Zaslow proposed that mirror symmetry should be what physicists call {\it T-duality}. 
In this so-called SYZ description, a Calabi--Yau manifold $X$ admits a special Lagrangian $T^n$-fibration $\pi \colon X\rightarrow B$ near a LCSL 
and mirror Calabi--Yau manifold $Y$ is obtained by fiberwise dualization of $\pi$, modified by instanton corrections. 
The importance of this conjecture lies in the fact that it leads to an intrinsic characterization of the mirror Calabi--Yau manifold $Y$;  
it is the moduli space of these special Lagrangian fibers $T^n \subset X$ decorated with flat $\mathrm{U}(1)$-connection.

\begin{Ex} \label{SLAG Elliptic}
Let $E:=\C/(\Z+\Z\tau)$ be an elliptic curve with $\tau \in \mathbb{H}$.  
Fix a holomorphic 1-form $dz:=dx+idy$ and a K\"{a}hler form $\kappa:=dx\wedge dy$.  
A special Lagrangian submanifold $L\subset X$ is a real curve such that $\kappa|_{L}=0$ 
and $dy|_{L}=e^{i\theta_L}\mathrm{vol}_L$ for some $\theta_L \in \R$.  
The first condition is vacuous as $L$ is of real dimension 1, and  the second implies that $L$ is a {\it line}. 
For example, the map 
$$
\pi_{E} \colon E=\C/(\Z+\Z\tau)\rightarrow S^1=\R/\Im(\tau), \ \ z\mapsto \Im(z)
$$ 
is a special Lagrangian smooth $T^{1}$-fibration. 
\end{Ex}

Given a Calabi--Yau manifold, finding a special Lagrangian fibration is an important and currently unsolved problem in high dimensions. 
Among others, a well-known result is Gross and Wilson's work on Borcea--Voisin threefolds \cite{GW}. 
Recall that a Borcea--Voisin threefold is obtained from a K3 surface $S$ with an anti-symplectic (non-Enriques) involution $\iota$ and an elliptic curve $E$ 
by resolving singularities of the quotient $(S\times E)/\langle (\iota,-1_E)\rangle$ \cite{Bo,Vo}. 
There are two drawbacks in this case.  
Firstly, we need to allow a degenerate metric. 
Secondly, we need to work on a slice of the complex moduli space, where the threefold is realized as a blow-up of the orbifold $(S\times E)/\langle (\iota,-1_E)\rangle$. 

In this section we will construct special Lagrangian fibrations of the Calabi--Yau threefolds of type K with smooth Ricci-flat metric. 
Such a fibration for the Enriques Calabi--Yau threefold was essentially constructed in \cite{GW} and our arguments below are its modifications. 
We state it here and do not repeat it each time in the sequel. 



\subsection{K3 Surfaces as HyperK\"{a}hler Manifolds}
Let $S$ be a K3 surface with a Ricci-flat K\"{a}hler metric $g$. 
Then the holonomy group is $\mathrm{SU}(2)\cong \mathrm{Sp}(1)$ 
and the parallel transport defines complex structures $I,J,K$ satisfying the quaternion relations: $I^{2}=J^{2}=K^{2}=IJK=-1$ 
such that $S^{2}=\{aI+bJ+cK \in \End(TS) \bigm| a^{2}+b^{2}+c^{2}=1\}$ is the possible complex structures for which $g$ is a K\"{a}hler metric.  
The period of $S$ in the complex structure $I$ is given by the normalized holomorphic 2-form $\omega_{I}(*,**):=g(J*,**)+ig(K*,**)$, 
and the compatible K\"{a}hler form is given by $\kappa_{I}(*,**):=g(I*,**)$. 
We denote, for instance, by $S_{K}$ the K3 surface $S$ with the complex structure $K$. 
\begin{center}
 \begin{tabular}{|c|c|c|}  \hline
 Complex structure & Holomorphic 2-form & K\"{a}hler form \\ \hline
 $I$  & $\omega_{I}:=\Re\omega_{I}+i\Im\omega_{I}$ & $\kappa_{I}$ \\ \hline
 $J$  & $\omega_{J}:=\kappa_{I}+i\Re\omega_{I}$ & $\kappa_{J}:=\Im\omega_{I}$ \\ \hline
 $K$  & $\omega_{K}:=\Im\omega_I+i\kappa_{I}$ & $\kappa_{K}:=\Re\omega_{I}$ \\ \hline
 \end{tabular}
 \end{center}
Recall the {\it hyperK\"{a}hler trick}, which asserts  
that a special Lagrangian $T^2$-fibration with respect to the complex structure $I$ is the same as an elliptic fibration with respect to the complex structure $K$, due to the following proposition.
\begin{Prop}[Harvey--Lawson \cite{HL}] \label{Harvey--Lawson}
A real smooth surface $L \subset S_{I}$ is a special Lagrangian submanifold  if and only if $L\subset S_{K}$ is a complex submanifold.  
 \end{Prop}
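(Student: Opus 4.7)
The proposition is really a pointwise linear algebra statement about a real $2$-plane in a $4$-dimensional hyperK\"ahler tangent space. The plan is to use the identity $\omega_K = \Im \omega_I + i\,\kappa_I$ recorded in the table just above to see that the two vanishing conditions defining ``special Lagrangian'' for $L$ inside $S_I$ are exactly the real and imaginary parts of the single equation $\omega_K|_L = 0$, and then to recognize the latter equation as the characterization of complex submanifolds of $S_K$. I will normalize so that the phase $\theta_L = 0$; the general case follows by rotating $K$ inside the twistor sphere $S^2 \subset \mathrm{Sp}(1)$.

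Fix $p \in L$ and set $V := T_p L \subset T_p S$, a real $2$-plane. Reading off $\omega_K = \Im \omega_I + i\, \kappa_I$ from the table gives
\begin{equation*}
\omega_K|_V = 0 \ \Longleftrightarrow \ \kappa_I|_V = 0 \text{ and } \Im \omega_I|_V = 0,
\end{equation*}
and the right-hand side is precisely the Lagrangian and phase-zero special conditions for $V$ inside $S_I$ (after fixing $\Omega_X = \omega_I$).

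Next I prove the linear algebra claim: in a complex $2$-dimensional space $(T_pS, K)$ equipped with the nondegenerate holomorphic symplectic form $\omega_K$, a real $2$-plane $V$ satisfies $\omega_K|_V = 0$ if and only if $V$ is $K$-invariant, i.e., a complex line. One direction is immediate, since any $(2,0)$-form vanishes on a complex $1$-dimensional subspace. For the converse, suppose $V$ is not $K$-invariant, so that for any real basis $e_1, e_2$ of $V$ the vectors $e_1, e_2, K e_1, K e_2$ form a real basis of $T_p S$. Then the projection $\pi^{1,0} \colon V \otimes_\R \C \to T_p^{1,0} S_K$ is a $\C$-linear map between spaces of complex dimension $2$ with trivial kernel, hence an isomorphism. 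Under this isomorphism $\omega_K|_V$ corresponds to $\omega_K|_{T_p^{1,0} S_K}$, which is a nonzero top form on $T_p^{1,0} S_K$, contradicting $\omega_K|_V = 0$.

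Globalizing this pointwise equivalence over $p \in L$ gives the proposition: $L$ is special Lagrangian in $S_I$ iff $\omega_K|_{T_p L} = 0$ for every $p \in L$ iff every tangent space $T_p L$ is $K$-invariant iff $L$ is a complex submanifold of $S_K$. There is no genuine obstacle: the argument is purely linear algebra once the hyperK\"ahler identities among $\omega_I, \omega_J, \omega_K, \kappa_I, \kappa_J, \kappa_K$ in the table are in hand, and the only place where care is needed is keeping track of the phase convention so that the $K$ appearing in the statement is correctly matched to $\theta_L = 0$.
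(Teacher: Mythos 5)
The paper does not prove this proposition; it is quoted directly from Harvey--Lawson \cite{HL} (and used again in Gross--Wilson \cite{GW}), so there is no in-paper argument to compare against. Your proof is correct and is essentially the standard hyperK\"ahler-rotation argument that underlies the cited result: the identity $\omega_K=\Im\omega_I+i\kappa_I$ converts the two real conditions ($\kappa_I|_L=0$ and $\Im\omega_I|_L=0$) into the single complex condition $\omega_K|_L=0$, and the pointwise linear algebra showing that a real $2$-plane $V$ with $\omega_K|_V=0$ must be $K$-invariant is right: if $V$ is not $K$-invariant then $V\cap KV=0$ (being a $K$-invariant subspace of dimension $<2$), so $\pi^{1,0}|_{V\otimes\C}$ is an isomorphism and pulls back the nonzero top form $\omega_K|_{T^{1,0}}$ to something nonzero. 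Two minor points worth keeping visible. First, as you note, the statement as written is only literally true for phase $\theta_L\in\{0,\pi\}$; a special Lagrangian of general phase is complex for the rotated structure $\cos\theta_L\,K-\sin\theta_L\,J$, so your normalization is not cosmetic but is exactly what makes the proposition correct as stated. Second, $\omega_K|_V=0$ gives $\kappa_I|_V=0$ and $\Im\omega_I|_V=0$, whereas the special Lagrangian condition also demands $\Re\omega_I|_L=+\mathrm{vol}_L$ rather than $-\mathrm{vol}_L$; this is resolved by observing that $\Re\omega_I=\kappa_K$ is the K\"ahler form of $S_K$, which restricts to $+\mathrm{vol}$ on any complex curve with its complex orientation, so the sign is absorbed into the choice of orientation of $L$. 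With those two conventions pinned down, your argument is complete and self-contained.
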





\subsection{Calabi--Yau Threefolds of Type K}
Let $X$ be a Calabi--Yau threefold of type K and $S\times E\rightarrow X$ its minimal splitting covering with Galois group $G$.   
We equip $S$ with a $G$-invariant K\"{a}hler class $\kappa$, 
which uniquely determines a $G$-invariant Ricci-flat metric on $S$ \cite{Ya}. 
We may assume that $\kappa$ is generic in the sense that $\kappa^\perp \cap H^2(S,\Z)^G=0$.  
The product Ricci-flat metric on $S\times E$ is $G$-invariant and hence descends to a Ricci-flat metric on the quotient $X$, which we will always use in the following. 

\begin{Prop} \label{hyperKahler rotation}
There exists $\omega_I \in \mathcal{D}_S^G$ such that its hyperK\"{a}hler rotation $S_K$ admits an elliptic fibration $\pi_{S} \colon S_K\rightarrow \mathbb{P}^1$ with a $G$-stable multiple-section.
In other words, $S_I$ admits a special Lagrangian $T^2$-fibration $\pi_{S} \colon S_I\rightarrow \mathbb{P}^1$ with a special Lagrangian multiple-section which is $G$-stable as a set. 
\end{Prop}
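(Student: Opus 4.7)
The plan is to apply the hyperK\"ahler trick (Proposition~\ref{Harvey--Lawson}) together with the lattice classification of Theorem~\ref{G-inv lattice}. Inspecting the tabulation of $N_G$ one sees that in every case $N_G$ contains a hyperbolic summand $U(k)$ for some positive integer $k$; fix a standard basis $e,f$ with $e^2=f^2=0$ and $\langle e,f\rangle=k$. The critical observation is that the condition $e\in NS(S_K)$ unpacks as
\[
 \langle e,\omega_K\rangle\;=\;\langle e,\Im\omega_I\rangle+i\langle e,\kappa_I\rangle\;=\;0,
\]
and the imaginary term vanishes automatically for any $G$-invariant K\"ahler class $\kappa_I\in M_G\otimes\R$ because $M_G\perp N_G$; only the single real linear condition $\langle \Im\omega_I,e\rangle=0$ on the period remains. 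This is precisely the reason for working in $N_G$: had one picked an isotropic $e\in M_G$, the requirement $\langle e,\kappa_I\rangle=0$ would force $\kappa_I$ to lie in the negative semi-definite lattice $e^\bot\cap M_G$, which contains no K\"ahler class.

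Concretely, I would fix a $G$-invariant K\"ahler class $\kappa_I\in M_G\otimes\R$ (possible since $M_G\supset U(2)$, hence has signature $(1,\rk M_G-1)$), and then choose $\omega_I\in\mathcal{D}_S^G$ satisfying the linear conditions $\langle \Im\omega_I,e\rangle=\langle \Im\omega_I,f\rangle=0$, cutting out a non-empty real subvariety of the period domain so that both $e$ and $f$ land in $NS(S_K)$. After possibly negating $e$ and reflecting in finitely many $(-2)$-curves, the primitive nef isotropic class $e$ defines an elliptic fibration $\pi_S\colon S_K\to\mathbb{P}^1$ by the classical theory of elliptic K3 surfaces (\cite[V.7]{BHPV}); Riemann--Roch combined with genericity of $\omega_I$ then ensures that some sign of $f$ is represented by an effective irreducible divisor $D_0$ on $S_K$ meeting a general fiber in $k$ points.

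For the $G$-equivariance, each $h\in H$ is symplectic on $S_I$, hence preserves $\kappa_J=\Im\omega_I$ and $\kappa_K=\Re\omega_I$ and acts holomorphically on $S_K$ with $h^*e=e$; each $\iota'\in G\setminus H$ is anti-symplectic on $S_I$, therefore reverses $\kappa_J$ and $\kappa_K$ and acts anti-holomorphically on $S_K$, with $(\iota')^*e=-e$ in cohomology. A brief tangent-space computation shows that an anti-holomorphic diffeomorphism takes complex curves to complex curves while reversing their complex orientation, so the minus sign in $(\iota')^*e=-e$ is cancelled by the orientation flip and $\iota'$ still permutes fibers of $\pi_S$. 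Consequently $G$ descends to an action on the base $\mathbb{P}^1$. The $G$-orbit
\[
 \Sigma\;:=\;\bigcup_{g\in G}g(D_0)
\]
is $G$-stable by construction, each of its components is a complex curve on $S_K$ (and hence a special Lagrangian submanifold of $S_I$ by Proposition~\ref{Harvey--Lawson}), and $\Sigma$ meets every fiber of $\pi_S$; this provides the required multi-section.

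The main technical obstacle is the orientation bookkeeping dictated by the fact that $\iota$ is only anti-holomorphic on $S_K$: on the face of it $(\iota')^*e=-e$ looks incompatible with $\iota'$ preserving the fibration, and moreover it suggests that no $G$-stable multi-section can carry a $G$-invariant cohomology class (indeed the naive alternating sum vanishes). Both difficulties evaporate once one carefully distinguishes the intrinsic complex orientation on each transported curve from the one pushed forward by $\iota'$, but this is the step where the argument requires genuine care.
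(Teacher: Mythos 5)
Your argument is correct and follows essentially the same route as the paper's proof: extract the hyperbolic summand $U(k)\subset N_G$ from Theorem~\ref{G-inv lattice}, choose $\omega_I$ with $\Im\omega_I\in U(k)^\perp_{N_G}\otimes\R$ so that $e,f\in NS(S_K)$, make an isotropic class nef by reflections in $(-2)$-curves to obtain the elliptic fibration, and take the $G$-orbit of an irreducible curve meeting the fibre class positively as the $G$-stable multiple-section. The only differences are cosmetic: the paper uses an arbitrary irreducible curve $C$ with $\langle C,f\rangle>0$ rather than trying to represent $f$ itself by an irreducible effective divisor, and it defers the holomorphic/anti-holomorphic equivariance bookkeeping to Proposition~\ref{equiv}.
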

\begin{proof}
By Theorem \ref{G-inv lattice}, the transcendental lattice $N_G=H^2(S,\Z)_{C_2}^{H}$ always contains $U(k)$ for some $k \in \N$. 
We denote by $e,f$ the standard basis of $U(k)$. 
We choose a generic $\omega_I \in \mathcal{D}_S^G$ with $\Im \omega_I \in U(k)^\perp_{N_G}\otimes \R$. 
Then it follows that $e,f$ are contained in $NS(S_K)$.  
By \cite[Lemma 1.7]{OS}, we may assume that $f$ is nef after applying a sequence of reflections with respect to $(-2)$-curves in $NS(S_K)$, if necessary.
Hence the linear system of $f$ induces an elliptic fibration $\pi_{S} \colon S_K\rightarrow \mathbb{P}^1$.
Let $C$ be an irreducible curve $C$ in $S_K$ such that $\langle C,f \rangle>0$.
Then $C$ is a multiple-section of the elliptic fibration with multiplicity $\langle C,f \rangle$.
Thus $\cup_{g\in G} \, g(C)$ is a $G$-stable multiple-section.
\end{proof}

It is possible to show the existence of a special Lagrangian fibration for a {\it generic} choice of complex structure of $S$ with a {\it numerical} multiple-section (see \cite[Proposition 1.3]{GW}). 
There is in general no canonical choice of such a fibration in contrast to the Borcea--Voisin case.  
Henceforth we denote simply by $S$ the K3 surface $S_I$ with the complex structure $I$ obtained in Proposition \ref{hyperKahler rotation}.  


\begin{Prop} \label{equiv}
The action of $H$ on $S_{K}$ is holomorphic and that of $\iota$ is anti-holomorphic, where $K$ is the complex structure obtained in Proposition \ref{hyperKahler rotation}.   
Moreover the action of $G$ on $S$ descends to the base $\mathbb{P}^1$ in such a way that 
the fibration $\pi_{S} \colon S\rightarrow \mathbb{P}^1$  is $G$-equivariant. 
\end{Prop}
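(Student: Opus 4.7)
The plan is to apply the hyperK\"ahler trick systematically, translating statements about the $G$-action on $(S,g)$ into statements about how $G$ interacts with the three complex structures $I,J,K$. The decisive inputs are that the $G$-invariant Ricci-flat metric makes $\kappa_I$ invariant under all of $G$, while $\omega_I$ satisfies $h^*\omega_I=\omega_I$ for $h\in H$ (symplectic action) and $\iota^*\omega_I=-\omega_I$ (Enriques involution). Using the dictionary $\kappa_K=\Re\omega_I$ and $\omega_K=\Im\omega_I+i\kappa_I$, one immediately reads off
\[ h^*\kappa_K=\kappa_K,\quad h^*\omega_K=\omega_K,\quad \iota^*\kappa_K=-\kappa_K,\quad \iota^*\omega_K=-\overline{\omega_K}. \]
Since every element of $G$ is a $g$-isometry, the identities for $h$ force $dh\circ K=K\circ dh$ (so $h$ is $K$-holomorphic), and those for $\iota$ force $d\iota\circ K=-K\circ d\iota$ (so $\iota$ is $K$-anti-holomorphic).

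For the $G$-equivariance of $\pi_S\colon S_K\to\PP^1$, recall that $\pi_S$ is defined by the linear system $|f|$ of the nef class from Proposition \ref{hyperKahler rotation}. For $h\in H$ this is immediate: $h$ is a biholomorphism of $S_K$ and $h^*f=f$, since $f\in U(k)\subset H^2(S,\Z)^H$. Hence $h$ preserves $|f|$ and permutes fibers, descending to an automorphism of $\PP^1$.

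The main obstacle is handling $\iota$: by Theorem \ref{G-inv lattice}, $\iota$ acts as $-1$ on $N_G\supset U(k)$, so $\iota^*f=-f$ in cohomology, and at first glance $\iota$ would send $|f|$ to the empty linear system $|-f|$. I would resolve this by working set-theoretically with the fibers rather than with the line bundle. Anti-holomorphy of $\iota$ on $S_K$ means that $d\iota$ preserves $K$-complex tangent lines, so for any generic fiber $F$ of $\pi_S$ the image $\iota(F)$ is again a holomorphic curve in $S_K$. However $\iota|_F\colon F\to\iota(F)$ is an anti-holomorphic map of smooth curves, hence reverses the complex orientation; taking this into account, the homology class of $\iota(F)$ with its natural complex orientation on $S_K$ equals $[F]$ rather than $-[F]$, precisely cancelling the sign coming from $\iota^*f=-f$. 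Since $f$ is the class of an irreducible fiber of the elliptic fibration and $\iota(F)$ is irreducible, $\iota(F)$ must itself be a fiber of $\pi_S$. Thus $\iota$ permutes the fibers and descends to $\PP^1$, completing the $G$-equivariance.
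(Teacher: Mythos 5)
Your proof is correct and follows essentially the same route as the paper: use $G$-invariance of the metric and the hyperK\"ahler dictionary to get $h^*\omega_K=\omega_K$ and $\iota^*\omega_K=-\overline{\omega_K}$ at the level of forms, conclude (anti-)holomorphy, and then descend along the fibration using that $H$ fixes the fiber class while $\iota$ negates it. Your set-theoretic resolution of the sign $\iota^*f=-f$ via the orientation reversal of $\iota|_F$ is a correct and welcome elaboration of a step the paper dispatches in one clause.
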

\begin{proof}
Recall that the K\"{a}hler form $\kappa$ is $G$-invariant. 
The metric $g$ is still Ricci-flat after a hyperK\"{a}hler rotation and hence the $G$-action is isometry with respect to the metric $g$ on $S_{K}$. 
The holomorphic 2-form $\omega_{K}$ is harmonic and so is $g^{*}\omega_{K}$ for any $g\in G$.  
For any $h\in H$, we hence have $h^{*}\omega_{K}=\omega_{K}$ as a 2-form. 
It then follows that the $H$-action is holomorphic on $S_{K}$. 
In the same manner, we can show that $\iota^*\omega_K=-\overline{\omega_K}$ as a 2-form and thus $\iota$ is anti-holomorphic. 
Since $H$ leaves the fiber
 class of $\pi_S$ invariant and $\iota$ simply changes its sign, 
the action of $G$ on $S_{K}$ descends to the base $\mathbb{P}^1$ in such a way that 
the fibration $\pi_S \colon S_{K}\rightarrow \mathbb{P}^1$ is $G$-equivariant. 
\end{proof}

Let $\pi_S \colon S\rightarrow \mathbb{P}^1 \cong S^2$ be the special Lagrangian fibration in Proposition \ref{hyperKahler rotation}.  
Combining the maps $\pi_S$ and $\pi_E$, we obtain a  special Lagrangian $T^{3}$-fibration $\pi_{S\times E} \colon S\times E \rightarrow S^{2}\times S^{1}$ with respect to the Ricci-flat metric on $S\times E$. 
Moreover, by Proposition \ref{equiv}, $\pi_{S\times E}$ induces a map 
$$
\pi_{X} \colon X=(S\times E)/G\rightarrow B:=(S^2\times S^1)/G 
$$
with a multiple-section. 

\begin{Prop}\label{SLAG}
The map $\pi_{X} \colon X\rightarrow B$ is a special Lagrangian $T^{3}$-fibration such that each (possibly singular) fiber has Euler number zero. 
\end{Prop}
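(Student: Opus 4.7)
The plan is to descend the product special Lagrangian fibration from the covering $S\times E$ to $X$ and then read off the Euler numbers fiber by fiber from the $S^{1}$-factor. First I would verify that $\pi_{S \times E} := \pi_{S}\times \pi_{E}$ is itself a special Lagrangian $T^{3}$-fibration on $S \times E$ with respect to the product Ricci-flat K\"{a}hler metric and the product holomorphic $3$-form $\omega_{S} \wedge dz_{E}$, by combining the special Lagrangian $T^{2}$-fibration of Proposition \ref{hyperKahler rotation} with the $T^{1}$-fibration of Example \ref{SLAG Elliptic}. Its fibers take the product form $\widetilde{F} = F_{S}\times S^{1}$, where $F_{S}$ is a (possibly singular) fiber of $\pi_{S}$.

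Next I would establish $G$-equivariance of $\pi_{S\times E}$. Proposition \ref{equiv} supplies it for $\pi_{S}$, while for $\pi_{E}(z) = \Im(z)$ the $H$-translations on $E$ shift $\Im(z)$ by a real constant and $\iota = -1_{E}$ negates it, so $\pi_{E}$ descends to a $G$-action on the base $S^{1}$. Hence $\pi_{X}$ is well-defined. Since $S \times E \to X$ is \'etale, $G$ acts freely on $S \times E$, so every fiber $F_{X}$ of $\pi_{X}$ is a free quotient $\widetilde{F}/K$, where $K \le G$ denotes the stabiliser of a representative base point in $S^{2}\times S^{1}$. Both the Ricci-flat K\"{a}hler form and the holomorphic $3$-form on $S\times E$ are $G$-invariant (the sign flips $\iota^{*}\omega_{S} = -\omega_{S}$ and $\iota^{*}dz_{E} = -dz_{E}$ cancel, while $H$ preserves each factor), so they descend to the Calabi--Yau structure on $X$, and the pointwise special Lagrangian conditions $\kappa|_{\widetilde{F}} = 0$ and $\Im(\Omega)|_{\widetilde{F}} = 0$ transfer to $F_{X}$; generic fibers, being free finite quotients of $T^{3}$, remain $3$-tori.

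Finally, every fiber $\widetilde{F} = F_{S}\times S^{1}$ of $\pi_{S\times E}$ satisfies $\chi(\widetilde{F}) = \chi(F_{S})\cdot \chi(S^{1}) = 0$, and multiplicativity of the Euler characteristic under the free $K$-action yields $\chi(F_{X}) = \chi(\widetilde{F})/|K| = 0$. The main subtlety I anticipate is ensuring that the stabiliser $K$ preserves the product decomposition $F_{S}\times S^{1}$, so that the Euler-number argument applies uniformly to singular fibers as well; this is automatic because the $G$-action on $S\times E$ is a product action by the very definition of a Calabi--Yau $G$-action (Definition \ref{def_cyg_k3}), and singular fibers $F_{S}$ of the $G$-equivariant elliptic fibration $\pi_{S}$ are $\mathrm{Stab}_{G}(b_{S})$-invariant by construction.
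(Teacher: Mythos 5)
Your overall strategy --- descend the product fibration $\pi_S\times\pi_E$ through the free $G$-action and compute Euler numbers via multiplicativity of $\chi$ under free finite quotients --- is exactly the paper's, and both the equivariance set-up and the Euler-number computation are correct. There is, however, one genuine gap: the assertion that generic fibers, ``being free finite quotients of $T^3$, remain $3$-tori,'' is false as a general statement. A free finite quotient of $T^3$ is a flat $3$-manifold but need not be a torus; for instance the free involution $(x,y,z)\mapsto(x+\tfrac{1}{2},-y,-z)$ on $\R^3/\Z^3$ has quotient a flat $3$-manifold with holonomy $\Z_2$. Freeness of the stabiliser action (which you correctly deduce from the covering being \'etale) is not enough; you must also know that the stabiliser $K$ acts on the torus fiber \emph{by translations}.

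The paper closes this gap with an explicit genericity condition on the base point. Writing $b$ for the image of $c=(c_1,c_2)\in S^2\times S^1$, one requires $c_1\notin (S^2)^{[G\setminus H]}$ (the set of points of $S^2$ fixed by some $g\in G\setminus H$), which forces $K\subset H$ --- this matters because elements of $G\setminus H$ act anti-holomorphically on $S_K$ and could otherwise stabilise a fibre and act on it in a non-translational way --- and $c_1\notin\pi_S(S^{[H]})$ (the image of the points of $S$ fixed by some nontrivial $h\in H$), which forces every nontrivial element of $K$ to act on the smooth elliptic fibre $F_S$ without fixed points, hence as a translation of the elliptic curve; on the circle $\pi_E^{-1}(c_2)$ it acts by a rotation in any case. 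Only with these conditions is $(F_S\times S^1)/K$ again a $3$-torus. Since the definition of a special Lagrangian $T^3$-fibration only constrains the \emph{generic} fibre, this genericity is all that is needed; your Euler-number argument for arbitrary (possibly singular) fibres is unaffected and agrees with the paper's.
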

\begin{proof}
Let $p \colon S^{2}\times S^{1} \rightarrow B$ denote the quotient map.
We define $S^{[H]}$ to be the set of points in $S$ fixed by some $h \in H$.
Similarly we define $(S^2)^{[G\setminus H]}$.
For any generic $b \in B$ in the sense that the fiber $\pi_{S\times E}^{-1}(p^{-1}(b))$ is smooth and
$$
b \not\in p\bigl( (\pi_S(S^{[H]}) \cup (S^2)^{[G\setminus H]}) \times S^1 \bigr),
$$
the fiber $\pi_{S\times E}^{-1}(p^{-1}(b))$ consists of some copies of $T^3$ and the fiber $\pi_X^{-1}(b)=\pi_{S\times E}^{-1}(p^{-1}(b))/G$ is again $T^3$. 
This shows that $\pi_{X}$ is a special Lagrangian $T^{3}$-fibration because the special Lagrangian condition is clearly preserved by the $G$-action.  
Moreover, for any $c=(c_1,c_2)\in S^2 \times S^1$, we have the Euler number $\chi_{\operatorname{top}}(\pi_{S\times E}^{-1}(c))=\chi_{\operatorname{top}}(\pi_{S}^{-1}(c_1)\times S^1)=0$. 
It then follows that $\chi_{\operatorname{top}}(\pi_X^{-1}(b))=\chi_{\operatorname{top}}\bigl(\pi^{-1}_{S\times E}(p^{-1}(b))\bigr)/|G|=0$ for any $b \in B$. 
\end{proof}

\begin{Prop} \label{SLAG2}
The base space $B$ is topologically identified either as the $3$-sphere $S^{3}$ or an $S^1$-bundle over $\mathbb{RP}^2$. 
\end{Prop}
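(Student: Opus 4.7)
The plan is to identify $B=(S^2\times S^1)/G$ topologically by first establishing that $B$ is a closed orientable $3$-manifold, reducing to the involution case, and then performing a dichotomy on the fixed-point behavior of the anti-holomorphic involutions in $G\setminus H$. First I verify that all of $G$ acts on $S^2\times S^1$ by orientation-preserving homeomorphisms: elements of $H$ preserve orientation on each factor (holomorphic on $S^2$ by Proposition \ref{equiv}, translations on $S^1$), whereas each $g\in G\setminus H$ is orientation-reversing on each factor (anti-holomorphic on $S^2$, negation on $S^1$) and hence orientation-preserving on the product. Since all fixed loci of $G$ in $S^2\times S^1$ have real codimension at least $2$, the quotient $B$ is a closed orientable topological $3$-manifold.

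I would then reduce to the $\iota$-action by showing $(S^2\times S^1)/H\cong S^2\times S^1$. Since $H$ acts orientation-preservingly through a finite subgroup of $\Aut(\PP^1)$ on $S^2$ (whose topological quotient is again $S^2$) and by rotations on $S^1$, the diagonal quotient is either the product $(S^2/H)\times S^1$, or an orientable $S^2$-bundle over $S^1/H\cong S^1$ when $H$ acts nontrivially on $S^1$; in either case it is homeomorphic to $S^2\times S^1$. The induced $\iota$-action is still orientation-reversing on each factor. The dichotomy is then between the two $\Aut(\PP^1)$-conjugacy classes of anti-holomorphic involutions on the $S^2$-factor: $z\mapsto\bar z$ (fixing a circle) versus $z\mapsto -1/\bar z$ (antipodal, acting freely).

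In \emph{Case A} ($\iota|_{S^2}$ fixes a circle), I cover $B$ by two open sets $U,V$ around the two endpoints of $S^1/\iota\cong [0,1/2]$; each deformation retracts onto the mapping cylinder of the quotient map $S^2\to S^2/\iota\cong D^2$, hence is contractible, while their intersection retracts onto a generic fiber $S^2$, which is simply-connected. Seifert--van Kampen then gives $\pi_1(B)=1$, and the Poincar\'e conjecture yields $B\cong S^3$. In \emph{Case B} ($\iota|_{S^2}$ acts freely), the quotient $S^2/\iota\cong\mathbb{RP}^2$, and the projection $B\to\mathbb{RP}^2$ is an $S^1$-fibration: around any $[x]\in\mathbb{RP}^2$, a lift $x\in S^2$ admits a neighborhood $U$ disjoint from $\iota(U)$, giving a local trivialization $p^{-1}(U)\cong U\times S^1$, so the fibration is a locally trivial $S^1$-bundle over $\mathbb{RP}^2$.

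The hardest step will be the reduction $(S^2\times S^1)/H\cong S^2\times S^1$, which requires a uniform argument across all $H$-types from Proposition \ref{PROP_OS}, especially when $H$ acts on $S^2$ through a non-cyclic (Klein $4$) subgroup of $\Aut(\PP^1)$; invoking the Poincar\'e conjecture in Case A is the other nontrivial ingredient.
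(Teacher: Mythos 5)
Your proposal is correct and follows the same route as the paper: first reduce to the identification $(S^2\times S^1)/H\cong S^2\times S^1$, then split into the two cases according to whether the residual anti-holomorphic involution on $S^2$ is conjugate to $z\mapsto\bar z$ (fixing a circle) or to the fixed-point-free $z\mapsto -1/\bar z$, yielding $S^3$ and an $S^1$-bundle over $\mathbb{RP}^2$ respectively. The only substantive difference is in the first case, where the paper cites the explicit identification of the quotient with $S^3$ from Gross--Wilson, whereas you deduce it from simple connectivity via Seifert--van Kampen together with the Poincar\'e conjecture; both are valid.
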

\begin{proof}
In the same manner as above, it is easy to see that the map $(S\times E)/H\rightarrow (S^2\times S^1)/H$ is a special Lagrangian $T^{3}$-fibration. 
Since the $H$-actions on $S^2$ and $S^1$ are rotations, the base $(S^2\times S^1)/H$ is topologically isomorphic to $S^2 \times S^1$. 
Let $\overline{\iota}$ be the generator of $G/H$.
The action of $\overline{\iota}$ on the first factor $S^{2}\cong \mathbb{P}^1$ is anti-holomorphic and hence is identified with either 
$$
z\mapsto \overline{z} \ \text{with} \ (S^{2})^{\overline{\iota}}=\R\cup \infty, \ \ \ \text{or} \ \ \  z\mapsto -\frac{1}{\overline{z}} \ \text{with} \ (S^{2})^{\overline{\iota}}=\emptyset. 
$$ 
The action of ${\overline{\iota}}$ on the second factor $S^1=\{|z|=1\} \subset \C$ is identified with the reflection along the real axis with $(S^{1})^{\overline{\iota}}=S^0$. 
The case when $(S^{2})^{\overline{\iota}}=\R\cup \infty$ was previously studied in \cite[Section 3]{GW} and the base $B=(S^2\times S^1)/\langle {\overline{\iota}} \rangle$ is topologically identified with $S^3$. 
On the other hand, when $(S^{2})^{\overline{\iota}}=\emptyset$, the base $B=(S^2\times S^1)/\langle {\overline{\iota}} \rangle$ is endowed with an $S^1$-bundle structure via the first projection:
$$
B=(S^2\times S^1)/\langle {\overline{\iota}} \rangle \rightarrow S^2/\langle {\overline{\iota}} \rangle \cong \mathbb{RP}^2. 
$$ 
\end{proof}
Note that $B$ need not be a $3$-sphere $S^{3}$ as $X$ is not simply-connected, but we do not know whether or not the latter case really occurs. 

For discussion on Brauer groups, dual fibrations and their sections, we refer the reader to \cite[comment after Remark 3.12]{Gr2}.  
Another interesting problem is to relate the Hodge theoretic mirror symmetry with the SYZ description.
Aspects of this have been studied for Borcea--Voisin threefolds in \cite{GW} (later more generally in \cite{Gr2}), 
where they recover the mirror map from the Leray spectral sequence associated to a special Lagrangian $T^3$-fibration. 
For a Borcea--Voisin threefold, this is the first description of the mirror map and conjecturally their method can be applied to a larger class of Calabi--Yau manifolds. 
The difficulty to tackle the problem in the present case is that the singular fibers may be reducible and the spectral sequence is more involved.

\par\noindent{\scshape \small
Graduate School of Mathematical Sciences, The University of Tokyo,\\
3-8-1 Komaba, Maguro-ku, Tokyo, 153-8914, Japan}
\par\noindent{\ttfamily kenji.hashimoto.math@gmail.com}
\break
\par\noindent{\scshape \small
Department of Mathematics, Harvard University\\
One Oxford Street, Cambridge MA 02138 USA }
\par\noindent{\ttfamily kanazawa@cmsa.fas.harvard.edu}

\end{document}